\newcommand{\pos}{\mathbb{R}_+}
\newcommand{\ca}{\mathcal{A}}
\newcommand{\cx}{\mathcal{X}}
\newcommand{\ct}{\mathcal{T}}
\newcommand{\cl}{\mathcal{L}}
\newcommand{\cu}{\mathcal{U}}
\newcommand{\cs}{\mathcal{S}}
\newcommand{\cp}{\mathcal{P}}
\newcommand{\cb}{\mathcal{B}}
\newcommand{\cd}{\mathcal{D}}
\newcommand{\cq}{\mathcal{Q}}
\newcommand{\ch}{\mathcal{H}}
\newcommand{\crl}{\mathcal{R}_\lambda}
\newcommand{\dds}{\frac{d}{ds}}
\newcommand{\ttt}{(\ct(t))_{t\geq 0}}
\newcommand{\loc}{_{\text{loc}}}
\newcommand{\doo}{\partial\Omega}
\newcommand{\dx}{\partial X}
\newcommand{\norm}[1]{\Vert #1 \Vert}
\newcommand{\rg}{\operatorname{rg}}
\newcommand{\mo}{\mu_1}
\newcommand{\md}{\mu_2}
\newtheorem{theorem}{Theorem}
\newtheorem{assums}[theorem]{Assumtptions}
\newtheorem{proposition}[theorem]{Proposition}
\newtheorem{lemma}[theorem]{Lemma}
\newtheorem{example}[theorem]{Example}
\newtheorem{examples}[theorem]{Examples}
\newtheorem{corollary}[theorem]{Corollary}
\newtheorem{definition}[theorem]{Definition}
\newtheorem{remark}[theorem]{Remark}
\numberwithin{equation}{section}
\numberwithin{theorem}{section}
\begin{document}

\title[THEORY AND APPLICATIONS OF ONE-SIDED COUPLED OPERATOR MATRICES]{THEORY AND APPLICATIONS OF\\
ONE-SIDED COUPLED OPERATOR MATRICES}

 \author{Marjeta Kramar}
\address{Marjeta Kramar\\
University of Ljubljana\\
Faculty of Civil and Geodetic Engineering\\
Department for Mathematics and Physics\\
Jamova 2\\
1000 Ljubljana, Slovenia}
\email{mkramar@fgg.uni-lj.si}

 \author{Delio Mugnolo}
\address{Delio Mugnolo\\
Arbeitsbereich Funktionalanalysis\\
Mathematisches Institut\\
Universität Tübingen\\
Auf Der Morgenstelle 10\\
D- 72076 Tübingen, Germany}
\email{demu@fa.uni-tuebingen. de}

 \author{Rainer Nagel}
\address{Rainer Nagel\\
Arbeitsbereich Funktionalanalysis\\
Mathematisches Institut\\
Universität Tübingen\\
Auf Der Morgenstelle 10\\
D- 72076 Tübingen, Germany}
\email{rana@fa.uni-tuebingen.de}


\subjclass[2000]{Primary 47D06; Secondary 35K99, 47B65}

\keywords{Operator matrices, $C_0$-semigroups, spectral theory, initial-boundary value problems}

\thanks{Parts of this paper are inspired by a series of lectures held by Klaus-J. Engel at the summer school of ``Semigroups of operators'' at Cortona in July 2001. We thank him for many inspiring discussions.
The second author is supported by the Istituto Nazionale di Alta Matematica ``Francesco Severi''.\\[3pt]
\textbf{This article  was originally published in: Conf.\ Sem.\ Mat.\ Univ.\ Bari 283 (2003).}}


\begin{abstract}
The theory of one-sided coupled operator matrices, recently introduced by K.-J. Engel, is an abstract framework for concrete initial value problems and allows complete information on well-posedness, and stability of solutions. These notes are meant as a survey on this rich theory, with a particular stress on applications to initial-boundary value problems with unbounded boundary feedbacks. A diffusion-transport system with dynamical boundary conditions is discussed, and its well-posedness and various other properties are investigated. As a by-product, the well-posedness of a wave equation with dynamical boundary condition is also obtained.
\end{abstract}
\maketitle

\section{INTRODUCTION}
\label{sec:introduction}

Of concern is an abstract Cauchy problem
\[
\begin{cases}
\dot{\cu}(t) = \ca\;{\cu}(t), & t \geq 0, \\
{\cu}(0) = {\cu}_0,
\end{cases}
\tag{ACP}\label{eq:ACP}
\]
on a product space $\cx := X\times Y$ for two Banach spaces $X$ and $Y$. It is well-known that \eqref{eq:ACP} can be studied by means of the theory of one-parameter semigroups of bounded linear operators. In particular, \eqref{eq:ACP} is \textit{well-posed} (which, roughly speaking, means that for all ${\cu}_0\in D(\ca)$ there exists a unique solution of \eqref{eq:ACP}, which depends continuously on the initial data) if and only if $\ca$ generates a strongly continuous semigroup on $\cx$ (see \cite[\S~II.6]{EN00}). Thus, it is our goal to check the generator property of $\ca$ (e.g., by using the Hille–Yosida Theorem), and then to discuss the asymptotic behavior of the solutions by means of spectral theory as, e.g., in \cite[Chapters~IV and V]{EN00}.

If we are given an unbounded operator $\ca$ on a product space $\cx = X\times Y$, one can wonder whether it is possible to represent it as an operator matrix and subsequently to characterize properties of $\ca$ by the matrix entries. Having reduced in this way a ``two-dimensional'' problem on $\cx = X\times Y$ to an equivalent ``one-dimensional'' problem on the factor spaces $X$ and/or $Y$ there is the hope to obtain conditions that in practice are easier to check. R. Nagel started such a semigroup theory for operator matrices in \cite{Na85} and \cite{Na89}. The concept of \textit{one-sided coupled operator matrices} turned out to be of particular interest and was introduced and mainly developed by K.-J.~Engel (see \cite{En97a,En97b,En98a,En98b,En99,En02}). Other authors studying such (or similar) operator matrices are Casarino, Engel, Nagel, and Nickel (see \cite{CENN02}), Favini, Goldstein, Ruiz Goldstein, and Romanelli (see \cite{FGGR02}), and Arendt (see \cite[\S~6]{ABHN01}). This theory applies to many initial value problems (including second order Cauchy problems and boundary value problems), but we explain all abstract results by two special classes of equations that will be referred to in the following: Volterra integro-differential equations and delay differential equations. We conclude this paper by discussing concrete boundary feedback systems.

\section{VOLTERRA AND DELAY DIFFERENTIAL EQUATIONS}
\label{sec:volterra-delay}

We consider the following \textit{Volterra integro-differential equation}
\[
\begin{cases}
\dot{u}(t) = Au(t) + \int_0^t C(t-s)Au(s)\,ds + f(t), & t \geq 0, \\
u(0) = u_0,
\end{cases}
\tag{VIDE}\label{eq:VIDE}
\]
where $X$ is a Banach space, $A$ is an operator on $X$, $u,f,C(\cdot)x\in F(\pos,X)$ for all $x\in D(A)$ (where $F$ denotes some function space such as $F=C_0$, $L^p$, etc.) and $u_0\in X$. In order to obtain an appropriate \eqref{eq:ACP} for this equation, we define a new state space $\cx := X \times F(\pos,X)$ and a new operator
\begin{equation}\label{eq:2.1}
\ca:= \begin{pmatrix}
A & \delta_0 \\
C(\cdot) & \dds
\end{pmatrix},\quad
D(\ca):= D(A) \times D\left(\dds\right)
\end{equation}
on $\cx$, where $\delta_0$ is the point evaluation in $0$ and $\dds$ denotes the generator of the left translation semigroup on $F(\pos,X)$ (for a thorough treatment see \cite[\S~VI.7]{EN00}).

As second prototype we consider the \textit{abstract delay differential equation}
\[
\begin{cases}
\dot{u}(t) = Au(t) + \Phi u_t, & t \geq 0, \\
u(s) = h(s), & s\in[-1,0],
\end{cases}
\tag{ADDE}\label{eq:ADDE}
\]
on a Banach space $X$, where $A$ is an operator on $X$, $u:[-1,+\infty)\rightarrow X$,
\[
u_t:[-1,0]\ni s\mapsto u(t+s)\in X
\]
is the \textit{history function} associated to $u$, $h\in L^p([-1,0],X)$ and $\Phi:W^{1,p}([-1,0],X)\rightarrow X$ a \textit{delay operator}. Again, we reformulate this problem as \eqref{eq:ACP} on a product space $\cx := X\times L^p([-1,0],X)$, for the operator
\begin{equation}\label{eq:2.2}
\ca:=\begin{pmatrix}
A& \Phi \\
0& \dds
\end{pmatrix}, \quad
D(\ca):= \left\{ \begin{pmatrix}
x \\ f
\end{pmatrix} \in D(A)\times W^{1,p}\left([-1,0],X\right) : f(0)=x \right\}.
\end{equation}
For the definition of classical solutions of \eqref{eq:ADDE}, and for more results about this class of differential equations, we refer to \cite[\S~VI.6]{EN00} or \cite{BP02}.

\section{GENERAL FRAMEWORK}
\label{sec:general-framework}

Our aim is to develop a general matrix theory which allows to treat the examples from the previous section (and many other) in a unified and systematic way. To that purpose we will always assume the following.

\begin{assums}\label{assumptions:3.1}
\begin{enumerate}[label=\rm{(A$_{\arabic*}$)}, itemsep=2pt, parsep=0pt]
\item $X$ and $Y$ are Banach spaces.
\item $\tilde{A}:D(\tilde{A})\subseteq X\to X$ and $D:D(D)\subseteq Y\to Y$ are closed linear operators.
\item $B:[D(D)]\rightarrow X$ and $L:[D(\tilde{A})]\rightarrow Y$ are bounded linear operators.
\end{enumerate}
\end{assums}

Here, for a given closed operator $C$ on a Banach space $Z$, $[D(C)]$ denotes the Banach space $\left(D(C),\norm{\cdot}_C\right)$ equipped with the graph norm
\[
\norm{z}_C := \norm{z} + \norm{Cz}, \qquad z\in D(C).
\]
In the following we will denote by $I_X$ and $I_Y$ the identity operators on $X$ and $Y$, and by $\pi_X$ and $\pi_Y$ the projections from $\cx:=X\times Y$ onto $X$ and $Y$, respectively.

\begin{definition}\label{def:osc-matrix}
Under the Assumptions~\ref{assumptions:3.1} we call the operator
\[
\ca:=\begin{pmatrix}
\tilde{A} & B \\
0 & D
\end{pmatrix}
\begin{pmatrix}
I_X & 0 \\
L & I_Y
\end{pmatrix}
\]
with domain
\[
D(\ca):=\left\{\begin{pmatrix}
x \\ y
\end{pmatrix}\in D(\tilde{A})\times Y : Lx+y\in D(D)\right\}
\]
the associated \textit{one-sided coupled} (short, \textit{osc}) \textit{operator matrix} on $\cx$.
\end{definition}

\section{CHARACTERIZATION OF ONE-SIDED COUPLED OPERATOR MATRICES}
\label{sec:characterization}

It is now a rather natural question to ask which operators $\ca$ on a product space $\cx$ are osc. For invertible operators the answer is provided by the following result (cf. \cite{En98b}).

\begin{proposition}\label{prop:4.1}
Let $X$, $Y$ be Banach spaces and $(\ca,D(\ca))$ be a linear operator on $\cx: =X\times Y$. Define the operators $B$ and $D$ by
\[
D(B)=D(D):=\left\{ y\in Y : \begin{pmatrix} 0 \\ y \end{pmatrix}\in D(\ca)\right\},
\]
\[
By:=\pi_X\left[\ca\begin{pmatrix} 0 \\ y \end{pmatrix}\right], \qquad
Dy:=\pi_Y\left[\ca\begin{pmatrix} 0 \\ y \end{pmatrix}\right].
\]
If $\ca$ is invertible and $\ca^{-1}=\begin{pmatrix} U & V \\ W & S \end{pmatrix}$, then the following properties are equivalent.
\begin{enumerate}[label=(\alph*)]
\item $\ca$ is an osc operator matrix and $D$ is invertible.
\item $U$ is injective and $\rg(V)\subseteq \rg(U)$.
\item $D$ is invertible.
\end{enumerate}
\end{proposition}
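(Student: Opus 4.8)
The plan is to close the cycle of implications (a) $\Rightarrow$ (c) $\Rightarrow$ (b) $\Rightarrow$ (a); since (a) $\Rightarrow$ (c) is immediate, only the other two are substantial. Throughout I would write $\ca^{-1}=\begin{pmatrix} U & V \\ W & S\end{pmatrix}$ and use the relation $\ca\begin{pmatrix} 0 \\ y\end{pmatrix}=\begin{pmatrix} By \\ Dy\end{pmatrix}$ for $y\in D(D)$, which is nothing but the definition of $B$ and $D$. (It is also worth noting at the outset that if $\ca$ happens to be osc with some data $(\tilde A,B',D',L)$, then reading off $\ca\begin{pmatrix} 0 \\ y\end{pmatrix}$ shows $B'=B$ and $D'=D$, so the canonically defined $B,D$ always coincide with the ones in any osc representation.)

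For (c) $\Rightarrow$ (b): assume $D$ invertible. To see that $U$ is injective, suppose $Uf=0$; then $\ca^{-1}\begin{pmatrix} f \\ 0\end{pmatrix}=\begin{pmatrix} 0 \\ Wf\end{pmatrix}$, so $Wf\in D(D)$, and applying $\ca$ gives $\begin{pmatrix} B(Wf) \\ D(Wf)\end{pmatrix}=\begin{pmatrix} f \\ 0\end{pmatrix}$; injectivity of $D$ forces $Wf=0$ and hence $f=B(Wf)=0$. To see $\rg(V)\subseteq\rg(U)$, fix $g\in Y$, set $y:=D^{-1}g\in D(D)$, and apply $\ca^{-1}$ to $\ca\begin{pmatrix} 0 \\ y\end{pmatrix}=\begin{pmatrix} By \\ g\end{pmatrix}$ to get $U(By)+Vg=0$, i.e. $Vg=-U(B D^{-1}g)\in\rg(U)$. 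Note that this implication uses nothing about the osc structure.

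For (b) $\Rightarrow$ (a): assume $U$ injective and $\rg(V)\subseteq\rg(U)$. I would recover the osc data by setting $\tilde A:=U^{-1}$ with $D(\tilde A):=\rg(U)$ and $L:=-WU^{-1}$ on $D(\tilde A)$, and introducing the auxiliary operator $R:=U^{-1}V\colon Y\to X$, which is well defined (by $\rg(V)\subseteq\rg(U)$ and injectivity of $U$) and bounded by a closed graph argument (if $y_n\to y$ and $Ry_n\to z$ then $Vy_n=URy_n\to Uz$ while $Vy_n\to Vy$, whence $z=Ry$). Chasing $D(\ca)=\rg(\ca^{-1})$ then shows $D(D)=\rg(S-WR)$, that $S-WR\colon Y\to D(D)$ is bijective (its injectivity follows from that of $\ca^{-1}$, since $\begin{pmatrix} 0 \\ (S-WR)g\end{pmatrix}=\ca^{-1}\begin{pmatrix} -Rg \\ g\end{pmatrix}$), and that $D=(S-WR)^{-1}$ and $B=-RD$; in particular $D$ is invertible, $\tilde A$ and $D$ are closed, and $B$, $L$ are bounded with respect to the respective graph norms, so that Assumptions~\ref{assumptions:3.1} hold. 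It then remains to verify the factorization: using that $Lx+y=D^{-1}g$ whenever $\begin{pmatrix} x \\ y\end{pmatrix}=\ca^{-1}\begin{pmatrix} f \\ g\end{pmatrix}$, one checks that $D(\ca)$ coincides with $\bigl\{\begin{pmatrix} x \\ y\end{pmatrix}:x\in D(\tilde A),\ Lx+y\in D(D)\bigr\}$ and that $\begin{pmatrix}\tilde A & B \\ 0 & D\end{pmatrix}\begin{pmatrix} I_X & 0 \\ L & I_Y\end{pmatrix}\begin{pmatrix} x \\ y\end{pmatrix}=\begin{pmatrix} f \\ g\end{pmatrix}=\ca\begin{pmatrix} x \\ y\end{pmatrix}$, so that $\ca$ is osc.

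The main obstacle is this reconstruction step (b) $\Rightarrow$ (a): one must single out the correct candidates for $\tilde A$, $L$ and the auxiliary $R$ purely from the block entries of $\ca^{-1}$, justify the required boundedness with respect to graph norms via closed graph arguments, and — most delicately — identify the abstractly prescribed domain $D(\ca)=\rg(\ca^{-1})$ with the osc domain by pushing the parametrization $\begin{pmatrix} Uf+Vg \\ Wf+Sg\end{pmatrix}$, $f\in X$, $g\in Y$, through the change of variables $\begin{pmatrix} x \\ y\end{pmatrix}\mapsto\begin{pmatrix} x \\ Lx+y\end{pmatrix}$.
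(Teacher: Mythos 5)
Your proposal is correct and follows essentially the same route as the paper: the same cycle (a)\,$\Rightarrow$\,(c)\,$\Rightarrow$\,(b)\,$\Rightarrow$\,(a) with the same candidates $\tilde A=U^{-1}$, $L=-WU^{-1}$, and the identification $D^{-1}=S-WU^{-1}V$. The only differences are presentational --- you make the closed-graph argument for the boundedness of $U^{-1}V$ explicit and identify $D(\ca)$ with the osc domain by parametrizing $\rg(\ca^{-1})$ directly, whereas the paper verifies $\ca^{-1}\tilde\ca=I$ on $D(\tilde\ca)$ and then checks $D(\ca)\subseteq D(\tilde\ca)$; both amount to the same computation.
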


\begin{proof}
Observe first that $\ca \begin{pmatrix} 0 \\ y \end{pmatrix} = \begin{pmatrix} By \\ Dy \end{pmatrix}$ for $y\in D(B)=D(D)$, hence
\begin{equation}\label{eq:4.1}
\begin{pmatrix} 0 \\ y \end{pmatrix} = \ca^{-1} \begin{pmatrix} By \\ Dy \end{pmatrix} = \begin{pmatrix} U & V \\ W & S \end{pmatrix} \begin{pmatrix} By \\ Dy \end{pmatrix} = \begin{pmatrix} UBy + VDy \\ WBy + SDy \end{pmatrix}.
\end{equation}
From this it follows that
\begin{equation}\label{eq:4.2}
UB+VD=0_{D(D)} \quad \text{and} \quad WB+SD = I_{D(D)}.
\end{equation}

We now prove that (c)$\Rightarrow$(b)$\Rightarrow$(a).

(c)$\Rightarrow$(b). Since $D$ is invertible, we obtain from \eqref{eq:4.2} that $V=-UBD^{-1}$ and therefore $\rg(V)\subseteq \rg(U)$. To show the injectivity of $U$ take $x\in\ker(U)$. Then $\ca^{-1}\begin{pmatrix} x \\ 0 \end{pmatrix}=\begin{pmatrix} 0 \\ Wx \end{pmatrix}\in D(\ca)$. This implies $Wx\in D(D)$ and by
\[
\begin{pmatrix} BWx \\ DWx \end{pmatrix}=\ca\begin{pmatrix} 0 \\ Wx \end{pmatrix}=\begin{pmatrix} x \\ 0 \end{pmatrix},
\]
we obtain $x=BWx$ and $Wx\in\ker(D)=\{0\}$. Hence, $x=0$ proving that $U$ is injective.

(b)$\Rightarrow$(a). For $\tilde A:=U^{-1}$ with domain $D(\tilde A):=\rg(U)$ and $L:=-WU^{-1}$, we define the matrix
\[
\begin{aligned}
D(\tilde\ca) &:=\left\{\begin{pmatrix} x \\ y \end{pmatrix}\in D(\tilde A)\times Y : Lx+y\in D(D)\right\}, \\
\tilde\ca &:=\begin{pmatrix} \tilde A & B \\ 0 & D \end{pmatrix}\begin{pmatrix} I_X & 0 \\ L & I_Y \end{pmatrix}.
\end{aligned}
\]
We first prove that $\ca=\tilde\ca$ by showing that $\ca^{-1}$ is the inverse of $\tilde\ca$. In fact, for $\begin{pmatrix} x \\ y \end{pmatrix}\in D(\tilde\ca)$ we have
\[
\begin{aligned}
\ca^{-1}\tilde\ca\begin{pmatrix} x \\ y \end{pmatrix}
&=\begin{pmatrix} U & V \\ W & S \end{pmatrix}\begin{pmatrix} \tilde A & B \\ 0 & D \end{pmatrix}\begin{pmatrix} x \\ Lx+y \end{pmatrix} \\
&=\begin{pmatrix} U & V \\ W & S \end{pmatrix}\begin{pmatrix} U^{-1}x+B(Lx+y) \\ D(Lx+y) \end{pmatrix} \\
&=\begin{pmatrix} x+UB(Lx+y)+VD(Lx+y) \\ WU^{-1}x+WB(Lx+y)+ SD(Lx+y) \end{pmatrix} \\
&=\begin{pmatrix} x+(UB+VD)(Lx+y) \\ WU^{-1}x+(WB+SD)(-WU^{-1}x+y) \end{pmatrix} \\
&\stackrel{\eqref{eq:4.2}}{=}\begin{pmatrix} x \\ y \end{pmatrix}
\end{aligned}
\]
and hence $\tilde\ca\subseteq \ca$. To show equality it suffices to verify that $D(\ca)\subseteq D(\tilde\ca)$, i.e.,
\begin{equation}\label{eq:4.3}
\ca^{-1}\begin{pmatrix} x \\ y \end{pmatrix}=\begin{pmatrix} Ux+Vy \\ Wx+Sy \end{pmatrix}\in D(\tilde\ca)
\end{equation}
for all $\begin{pmatrix} x \\ y \end{pmatrix}\in\cx$. To this end, we observe that the assumption $\rg(V)\subset\rg(U)$ implies $Ux+Vy\in\rg(U)=D(\tilde A)$. Moreover, for arbitrary $y\in Y$ the element $-U^{-1}Vy\in X$ is well defined and
\begin{equation}\label{eq:4.4}
\ca^{-1}\begin{pmatrix} -U^{-1}Vy \\ y \end{pmatrix}=\begin{pmatrix} 0 \\ (-WU^{-1}V+S)y \end{pmatrix}\in D(\ca)
\end{equation}
gives $(-WU^{-1}V+S)y=(LV+S)y\in D(D)$. Using these facts we obtain
\[
L[Ux+Vy]+[Wx+Sy]=(LV+S)y-WU^{-1}Ux+Wx=(LV+S)y\in D(D)
\]
proving \eqref{eq:4.3}.

At this point it only remains to prove invertibility of operator $D$ and to verify that the operators $\tilde A$, $B$, $D$ and $L$ satisfy our general assumptions. To this end, we first note that $\tilde A=U^{-1}$ is invertible, hence closed and that $L=-W\tilde A:[D(\tilde A)]\to Y$ is bounded. Next, by the first equality in \eqref{eq:4.2} it follows that $B=-U^{-1}V D$. Here the operator $U^{-1}V:Y\to X$ is bounded since $\rg(V)\subseteq \rg(U)$ by assumption. The second equality in \eqref{eq:4.2} then implies $(S-WU^{-1}V)Dy=y$ for all $y\in D(D)$. On the other hand, multiplying \eqref{eq:4.4} from the left by $\ca$ gives $D(S-WU^{-1}V)y=y$ for all $y\in Y$. This shows that $D$ is invertible, hence closed, and that $B=-U^{-1}V D:[D(D)]\to X$ is bounded as claimed.
\end{proof}

As a consequence we obtain a large class of osc operator matrices.

\begin{corollary}\label{cor:4.2}
Assume that $\dim X<\infty$. If ${\ca}$ is the generator of a strongly continuous semigroup on $\cx$, then ${\ca}-\lambda$ is an osc operator matrix for $\lambda$ large enough.
\end{corollary}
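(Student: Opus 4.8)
The plan is to apply Proposition~\ref{prop:4.1} to the shifted operator $\ca-\lambda$. Since $\ca$ generates a strongly continuous semigroup on $\cx$, there is $\omega\in\mathbb{R}$ with $(\omega,\infty)\subseteq\rho(\ca)$, so for every $\lambda>\omega$ the operator $\ca-\lambda$ is invertible with bounded inverse $(\ca-\lambda)^{-1}=-R(\lambda,\ca)$. Writing this inverse in block form,
\[
(\ca-\lambda)^{-1}=\begin{pmatrix} U_\lambda & V_\lambda \\ W_\lambda & S_\lambda \end{pmatrix},
\]
Proposition~\ref{prop:4.1}, applied with $\ca$ there replaced by $\ca-\lambda$, tells us that it suffices to verify condition~(b) of that proposition: that $U_\lambda$ is injective and $\rg(V_\lambda)\subseteq\rg(U_\lambda)$.

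This is where the hypothesis $\dim X<\infty$ enters decisively. Since $V_\lambda$ takes values in $X$, its range is automatically contained in $X$; and if $U_\lambda\in\cl(X)$ is injective, then finite-dimensionality of $X$ forces $U_\lambda$ to be surjective, so that $\rg(U_\lambda)=X\supseteq\rg(V_\lambda)$ with nothing further to check. Hence the corollary reduces entirely to proving that $U_\lambda$ is injective for all sufficiently large $\lambda$.

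For this I would invoke the standard fact that $\lambda R(\lambda,\ca)z\to z$ as $\lambda\to\infty$ for every $z\in\cx$ (see, e.g., \cite[\S~II.3]{EN00}). Applying it to $z=\begin{pmatrix} x \\ 0 \end{pmatrix}$ and projecting onto the first coordinate gives
\[
-\lambda U_\lambda x=\pi_X\left(\lambda R(\lambda,\ca)\begin{pmatrix} x \\ 0 \end{pmatrix}\right)\longrightarrow x\quad\text{as }\lambda\to\infty,\ \text{for every }x\in X,
\]
that is, $-\lambda U_\lambda\to I_X$ in the strong operator topology on $\cl(X)$. Because $X$ is finite-dimensional, strong operator convergence coincides with operator-norm convergence, so $\norm{-\lambda U_\lambda-I_X}<1$ once $\lambda$ is large enough; a Neumann series argument then shows that $-\lambda U_\lambda$, and therefore $U_\lambda$ itself, is invertible, in particular injective. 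Feeding this back into Proposition~\ref{prop:4.1} yields that $\ca-\lambda$ is an osc operator matrix (and, as a by-product, that the associated operator $D$ is invertible) for all sufficiently large $\lambda$.

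The only step that is not pure routine is the upgrade from strong to norm convergence of $-\lambda U_\lambda$, and this is precisely where the assumption $\dim X<\infty$ is used --- the conclusion genuinely fails without it. The remaining ingredients --- that $\lambda\in\rho(\ca)$ for large $\lambda$, the identification of the block entries of $(\ca-\lambda)^{-1}$, and the passage from injectivity to invertibility on a finite-dimensional space --- are all standard.
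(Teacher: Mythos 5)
Your proposal is correct and follows essentially the same route as the paper: Hille--Yosida gives $\lambda R(\lambda,\ca)\to I_\cx$ strongly, restricting to vectors $\left(\begin{smallmatrix} x \\ 0 \end{smallmatrix}\right)$ and using $\dim X<\infty$ upgrades the convergence of the upper-left block to operator-norm convergence, whence that block is invertible for large $\lambda$ and Proposition~\ref{prop:4.1}(b) applies. Your extra remarks (the sign bookkeeping between $(\ca-\lambda)^{-1}$ and $R(\lambda,\ca)$, and the observation that invertibility of the upper-left block makes the range condition $\rg(V_\lambda)\subseteq\rg(U_\lambda)$ automatic) only make explicit what the paper leaves implicit.
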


\begin{proof}
By the Hille–Yosida Theorem, $\lambda R(\lambda,{\ca})$ exists for $\lambda$ large and converges strongly to $I_{\cx}$ as $\lambda\to+\infty$. In particular
\[
\lambda R(\lambda,{\ca})\begin{pmatrix} x \\ 0 \end{pmatrix}=: \lambda\begin{pmatrix} U_\lambda & V_\lambda \\ W_\lambda & S_\lambda \end{pmatrix}\begin{pmatrix} x \\ 0 \end{pmatrix}= \begin{pmatrix} \lambda U_\lambda x \\ \lambda W_\lambda x \end{pmatrix} \rightarrow\begin{pmatrix} x \\ 0 \end{pmatrix}
\]
for all $x\in X$. Since $X$ is finite dimensional, this implies $\lambda U_\lambda\rightarrow I_X$ with respect to the operator norm. Hence $U_\lambda$ is invertible for $\lambda$ large enough and Proposition~\ref{prop:4.1}(b) gives the assertion.
\end{proof}

\begin{examples}\label{ex:4.3}
Let us consider again the operators associated to the equations from \eqref{sec:volterra-delay}.

\begin{enumerate}
\item (VIDE). For every $\lambda\in\rho\left(\dds\right)$ one can easily see that
\begin{equation}\label{eq:4.5}
\ca - \lambda = \begin{pmatrix}
A -\lambda & \delta_0 \\
C(\cdot) & \dds - \lambda
\end{pmatrix}
= \begin{pmatrix}
\tilde A_\lambda-\lambda & \delta_0 \\
0 & D-\lambda
\end{pmatrix}
\begin{pmatrix}
I_X & 0 \\
L_\lambda & I_{F(\pos,X)}
\end{pmatrix},
\end{equation}
where $\tilde A_\lambda:=A+\delta_0R\left(\lambda, \dds\right)C(\cdot)$, $L_\lambda:=-R(\lambda,{d\over ds})C(\cdot)$, and $D={d\over ds}$ with domain depending on the choice of the function space $F$ (see \cite[I.4.16]{EN00}). Hence, $\ca - \lambda$ is an osc operator matrix.

\item (ADDE). The matrix ${\ca}$ introduced in \eqref{eq:ADDE} is an osc matrix on ${\mathcal X}:=X\times L^p([-1,0],X)$. In fact, we can decompose it as
\begin{equation}\label{eq:4.6}
\ca = \begin{pmatrix}
\tilde A & \Phi \\
0 & D
\end{pmatrix}
\begin{pmatrix}
I_X & 0 \\
L & I_{L^p}
\end{pmatrix},
\end{equation}
where $\tilde A:=A-\Phi L$, $D={d\over ds}$ with domain $D(D)=\{f\in W^{1,p}([-1,0],X):f(0)=0\}$, and $L:= - |1\otimes I_X$, i.e., $Lx$ is the function of constant value $-x$. By \eqref{eq:2.2} we have
\[
\begin{pmatrix} x \\ f \end{pmatrix}\in D(\ca) \iff x \in D(A),\quad f\in W^{1,p}([-1,0],X) \quad \text{and} \quad f(0)=x.
\]
Since the latter condition is equivalent to $f-|1\otimes x = Lx + f \in D(D)$, the domain of ${\ca}$ according to Definition~\ref{def:osc-matrix} actually coincides with the domain introduced in \eqref{sec:volterra-delay} for \eqref{eq:ADDE}. Moreover, for $\begin{pmatrix} x \\ f \end{pmatrix}\in D(\ca)$ we obtain
\[
\begin{aligned}
\begin{pmatrix} \tilde A & \Phi \\ 0 & D \end{pmatrix}\begin{pmatrix} I_X & 0 \\ L & I_Y \end{pmatrix}\begin{pmatrix} x \\ f \end{pmatrix}
&=\begin{pmatrix} A-\Phi L & \Phi \\ 0 & {d\over ds} \end{pmatrix}\begin{pmatrix} x \\ Lx+f \end{pmatrix} \\
&=\begin{pmatrix} Ax-\Phi Lx+\Phi Lx+\Phi f \\ (Lx+f)' \end{pmatrix}=\begin{pmatrix} Ax+\Phi f \\ f' \end{pmatrix}={\ca}\begin{pmatrix} x \\ f \end{pmatrix}
\end{aligned}
\]
proving the representation of $\ca$ as in \eqref{eq:4.6}.

We remark that, in this case, the operator $D$ defined in Proposition~\ref{prop:4.1} equals $\dds$ with domain
\[
D\left(\dds\right)= \left\{ f\in L^{p}\left([-1,0],X\right): \begin{pmatrix} 0 \\ f \end{pmatrix}\in D(\ca)\right\} =\left\{ f\in W^{1,p}\left([-1,0],X\right): f(0)=0 \right\}.
\]
Therefore, $D$ is the generator of the nilpotent (left) translation semigroup (see \cite[I.4.17]{EN00}) and is invertible, since $\sigma(D)=\emptyset$. Note that we cannot use Proposition~\ref{prop:4.1} in order to show that $\ca$ is osc, since in general it is not invertible.

\item Let us now give an example of a \textit{non}-osc operator matrix. Take $\cx:= L^p(0,1)\times L^p(0,1)$ and $\tilde\ca$ defined by
\[
\begin{aligned}
D(\tilde\ca) &:= \left\{ \begin{pmatrix} f \\ g \end{pmatrix} \in W^{1,p}(0,1)\times W^{1,p}(0,1) : f(0)=g(0), f(1)=g(1) \right\}, \\
\tilde\ca &:=\begin{pmatrix} \dds & 0 \\ 0 & -\dds \end{pmatrix}
\end{aligned}
\]
which can be identified with the generator of the `counter-clockwise' shift (semi)group of isometries on the circle. Then the operator $\ca:=\tilde\ca-I_\cx$ is invertible since it generates a uniformly exponentially stable semigroup. Moreover, the operator $D$ defined in Proposition~\ref{prop:4.1} is given by
\[
\begin{aligned}
D(D)&=\left\{ g\in L^p(0,1): \begin{pmatrix} 0 \\ g \end{pmatrix}\in D(\ca)\right\} = \left\{g\in W^{1,p}(0,1): g(0)=g(1)=0\right\}, \\
Dg&=\pi_Y\left[\ca\begin{pmatrix} 0 \\ g \end{pmatrix}\right] = -\left(\dds+I_{L^p(0,1)}\right) g.
\end{aligned}
\]
Observe that $\sigma(D)=\mathbb{C}$, hence $D$ is not invertible and, by Proposition~\ref{prop:4.1}, $\ca$ is not osc.
\end{enumerate}
\end{examples}

\section{SPECTRAL THEORY FOR ONE-SIDED COUPLED OPERATOR MATRICES}
\label{sec:spectral-theory}

We now look for a connection between the spectrum of an osc matrix $\ca$ on $\cx=X\times Y$ and the spectra of certain associated operators on the factor spaces $X$ and $Y$. Let
\[
\ca:=\begin{pmatrix}
\tilde{A} & B \\
0 & D
\end{pmatrix}
\begin{pmatrix}
I_X & 0 \\
L & I_Y
\end{pmatrix}
\]
be an osc operator matrix and for all $\lambda\in\rho(D)$ define
\[
A_{\lambda} := \tilde{A} + \lambda B R(\lambda, D) L, \qquad D(A_\lambda):=D(\tilde A).
\]

\begin{theorem}\label{thm:5.1}
With the above notation and for every $\lambda\in\rho(D)$ we have
\[
\lambda\in\rho(\ca) \iff \lambda\in\rho(A_{\lambda}).
\]
In this case
\begin{equation}\label{eq:5.1}
R(\lambda, \ca) = \begin{pmatrix}
R(\lambda, A_{\lambda}) & R(\lambda, A_{\lambda})B R(\lambda, D) \\
-L_{\lambda} R(\lambda, A_{\lambda}) & R(\lambda, D)-L_{\lambda} R(\lambda, A_{\lambda}) B R(\lambda, D)\end{pmatrix}
\end{equation}
where $L_{\lambda} := -D R(\lambda, D)L$.
\end{theorem}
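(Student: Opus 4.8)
The plan is to solve the resolvent equation $(\lambda-\ca)\binom{x}{y}=\binom{u}{v}$ coordinatewise, using $\lambda\in\rho(D)$ to eliminate the $Y$-component and thereby reduce everything to the operator $\lambda-A_\lambda$ on $X$. First I would record that, for $\binom{x}{y}\in D(\ca)$ and with the abbreviation $z:=Lx+y\in D(D)$,
\[
(\lambda-\ca)\binom{x}{y}=\binom{(\lambda-\tilde A)x-Bz}{(\lambda-D)z-\lambda Lx}.
\]
Given $\binom{u}{v}\in\cx$, the second coordinate reads $(\lambda-D)z=v+\lambda Lx$, which --- since $\lambda\in\rho(D)$ --- is equivalent to $z=R(\lambda,D)v+\lambda R(\lambda,D)Lx$; this $z$ lies in $D(D)$ automatically, so the only genuine constraint that survives is $x\in D(\tilde A)=D(A_\lambda)$. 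Feeding this back into the first coordinate and recalling $A_\lambda=\tilde A+\lambda BR(\lambda,D)L$ turns the system into the single equation $(\lambda-A_\lambda)x=u+BR(\lambda,D)v$ on $D(\tilde A)$, after which $y$ is recovered as $y=z-Lx=-L_\lambda x+R(\lambda,D)v$, where one uses the identity $\lambda R(\lambda,D)-I=DR(\lambda,D)$, i.e.\ $(\lambda R(\lambda,D)-I)L=-L_\lambda$.

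For the implication ``$\lambda\in\rho(A_\lambda)\Rightarrow\lambda\in\rho(\ca)$'' I would let $\crl$ denote the operator matrix appearing on the right-hand side of \eqref{eq:5.1}. Boundedness of $B\colon[D(D)]\to X$, of $L\colon[D(\tilde A)]\to Y$, of $R(\lambda,D)\colon Y\to[D(D)]$ and of $R(\lambda,A_\lambda)\colon X\to[D(\tilde A)]$ (the last by the closed graph theorem, since $\tilde A$ is closed) shows $\crl\in\mathcal L(\cx)$ and, via the formulas for $x$, $y$ and $z$ from the previous paragraph, that $\rg\crl\subseteq D(\ca)$. Inserting $\binom{x}{y}:=\crl\binom{u}{v}$ into the reduction gives $(\lambda-\ca)\crl=I_\cx$, and running the reduction in reverse starting from an arbitrary $\binom{x}{y}\in D(\ca)$ gives $\crl(\lambda-\ca)=I_{D(\ca)}$. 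Hence $\lambda-\ca$ is a bijection from $D(\ca)$ onto $\cx$ with bounded inverse $\crl$, which is exactly $\lambda\in\rho(\ca)$ together with formula \eqref{eq:5.1}.

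For the converse, assume $\lambda\in\rho(\ca)$. Surjectivity of $\lambda-A_\lambda$ follows by applying the reduction to data $\binom{u}{0}$: the element $\binom{x}{y}:=R(\lambda,\ca)\binom{u}{0}$ lies in $D(\ca)$, hence $x\in D(\tilde A)$ and $(\lambda-A_\lambda)x=u$. For injectivity, if $x\in D(\tilde A)$ satisfies $(\lambda-A_\lambda)x=0$, I would set $y:=-L_\lambda x$, note that $Lx+y=\lambda R(\lambda,D)Lx\in D(D)$ so that $\binom{x}{y}\in D(\ca)$, and verify from the coordinatewise formula that $(\lambda-\ca)\binom{x}{y}=0$, whence $x=0$. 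Finally $(\lambda-A_\lambda)^{-1}=\pi_X R(\lambda,\ca)\iota_X$, with $\iota_X\colon u\mapsto\binom{u}{0}$, is a composition of bounded maps, so $\lambda\in\rho(A_\lambda)$.

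I expect the only real care to be needed in the bookkeeping of domains and graph norms --- in particular in checking that $\crl$ genuinely maps $\cx$ into $D(\ca)$ (which hinges on $R(\lambda,D)v+\lambda R(\lambda,D)Lx\in D(D)$ and $\rg R(\lambda,A_\lambda)\subseteq D(\tilde A)$) and that no extra regularity of $v$ is required, since the coupling condition $Lx+y\in D(D)$ is already built into the structure of $\ca$. Conceptually the statement is just a Schur-complement computation, adapted to the unbounded but graph-continuous off-diagonal entries $B$ and $L$.
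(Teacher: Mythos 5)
Your proposal is correct and is essentially the paper's argument: the coordinatewise elimination of the $Y$-variable is exactly the Schur-complement factorization $\lambda-\ca=\mathcal{L}_\lambda\mathcal{Q}_\lambda\mathcal{R}_\lambda$ that the paper writes down, with the same reduction to $\lambda-A_\lambda$ and the same resolvent formula. The only difference is presentational—you solve the system by hand and spell out the domain and boundedness checks that the paper dismisses with ``it can be verified,'' which is a welcome amount of extra care but not a different route.
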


\begin{proof}
Let $\lambda\in\rho(D)$. It can be verified that
\[
\lambda - \ca = \begin{pmatrix} I_X & -BR(\lambda, D) \\ 0 & I_Y \end{pmatrix}
\begin{pmatrix} \lambda - A_{\lambda} & 0 \\ 0 & \lambda - D \end{pmatrix}
\begin{pmatrix} I_X & 0 \\ L_{\lambda} & I_Y \end{pmatrix}=: {\mathcal L}_{\lambda}{\mathcal Q}_{\lambda}{\mathcal R}_{\lambda}.
\]
Observe now that ${\mathcal L}_{\lambda}$ is invertible on $X\times Y$ and ${\mathcal R}_{\lambda}$ is invertible and bounded on $[D(\tilde{A})]\times Y$, while ${\mathcal Q}_{\lambda}$ is invertible if and only if $\lambda - A_{\lambda}$ is invertible. Hence we conclude that $\lambda-\ca$ is invertible if and only if $\lambda-A_\lambda$ is invertible.

To obtain the resolvent we compute
\[
\begin{aligned}
R(\lambda, \ca) &= {\mathcal R}_{\lambda}^{-1}{\mathcal Q}_{\lambda}^{-1}{\mathcal L}_{\lambda}^{-1} \\
&= \begin{pmatrix} I_X & 0 \\ -L_{\lambda} & I_Y \end{pmatrix}
\begin{pmatrix} R(\lambda,A_{\lambda}) & 0 \\ 0 & R(\lambda,D) \end{pmatrix}
\begin{pmatrix} I_X & BR(\lambda, D) \\ 0 & I_Y \end{pmatrix} \\
&= \begin{pmatrix}
R(\lambda, A_{\lambda}) & R(\lambda, A_{\lambda})B R(\lambda, D) \\
-L_{\lambda} R(\lambda, A_{\lambda}) & R(\lambda, D)-L_{\lambda} R(\lambda, A_{\lambda}) B R(\lambda, D)
\end{pmatrix}.
\end{aligned}
\]
\end{proof}

\begin{remark}
\label{rem:5.2}
If $\dim X <\infty$, then for $\lambda\in\rho(D)$ we have
\[
\lambda\in\sigma({\ca}) \iff \lambda\in P\sigma(A_{\lambda}) \iff \det(\lambda - A_{\lambda}) = 0,
\]
i.e., we obtain a \textit{characteristic equation} for the spectral values of $\ca$, cf. \cite{Na97,En99}.
\end{remark}

\begin{examples}\label{ex:5.3}
With Theorem~\ref{thm:5.1} in mind we can now examine the spectra of the operator matrices defined in \eqref{eq:2.1} and \eqref{eq:2.2}.

\begin{enumerate}
\item (VIDE). Using the representation \eqref{eq:4.5} of $\ca-\lambda$, we obtain by Theorem~\ref{thm:5.1} for $\lambda\in\rho\left(\dds\right)$ the characterization
\[
\lambda\in\sigma(\ca) \iff \lambda \in \sigma \left(A + \delta_0 R\left(\lambda, \dds\right) C(\cdot)\right).
\]
For $\operatorname{Re}\lambda > \omega\left(\dds\right)$, using the integral representation of the resolvent of $A$, this is equivalent to
\[
\lambda\in\sigma\left( A + \int_0^{\infty} e^{-\lambda s} C(s)\; ds\right).
\]
If $X$ is finite dimensional (typically, $X=\mathbb{C}^n$), by Remark~\ref{rem:5.2} we obtain the characteristic equation
\[
\lambda\in\sigma(\ca)\iff \det\left(\lambda-A-\int_0^{\infty} e^{-\lambda s} C(s)\;ds\right) = 0,
\]
for the spectral values of $\ca$ satisfying $\operatorname{Re}\lambda > \omega\left(\dds\right)$.

\item (ADDE). The condition $\lambda\in\rho(D)=\mathbb{C}$ is always satisfied and Theorem~\ref{thm:5.1} yields
\[
\lambda\in\sigma(\ca)\iff\lambda\in \sigma(A_\lambda)=\sigma\left(A+\Phi\left(|1\otimes I_X\right)-\lambda\Phi R\left(\lambda,\dds\right)\left(|1\otimes I_X\right)\right).
\]
In order to simplify this characterization, we show that $|1\otimes I_X-\lambda R(\lambda,\dds)(|1\otimes I_X)= \varepsilon_\lambda\otimes I_X$, where $\varepsilon_{\lambda}\otimes I_X : X\to W^{1,p}\left([-1,0],X\right)$, $\left(\left(\varepsilon_{\lambda}\otimes I_X\right)(x)\right)(s) = e^{\lambda s} x$. In fact, using the explicit formula for the resolvent of the first derivative (see \cite[II.2.10]{EN00}) we obtain for all $x\in X$ and $s\in[-1,0]$ that
\[
\begin{aligned}
[(\varepsilon_\lambda\otimes I_X)x](s) &= e^{\lambda s}x = x-\lambda\int_s^0 e^{-\lambda(\tau-s)}x\,d\tau \\
&= x-\lambda \int_s^0 e^{\lambda k}dk\,x \\
&= \left[|1\otimes I_X-\lambda R\left(\lambda,\dds\right)(|1\otimes I_X)\right]x.
\end{aligned}
\]
Therefore, we have
\[
\lambda\in\sigma(\ca) \iff \lambda \in \sigma \left(A + \Phi \left(\varepsilon_{\lambda}\otimes I_X\right)\right).
\]
Moreover, if $\dim X < \infty$, by Remark~\ref{rem:5.2} it holds that
\[
\lambda\in\sigma(\ca)\iff \det\left(\lambda - A-\Phi \left(\varepsilon_{\lambda}\otimes I_X\right)\right) = 0,
\]
i.e., the spectral values of $\ca$ are again the zeros of a characteristic equation.
\end{enumerate}
\end{examples}

\section{ONE-SIDED COUPLED GENERATORS OF STRONGLY CONTINUOUS SEMIGROUPS}
\label{sec:generators}

We have already characterized osc operator matrices and also mentioned several spectral properties they inherit from their entries. We will now try to give some conditions which imply that a given osc operator matrix generates a strongly continuous semigroup on the product space $\cx$. A motivation for this task is the fact that, as a rule, \textit{wellposedness of \eqref{eq:ACP} for the osc operator matrix $\ca$ is equivalent to wellposedness of the original initial value problem} (in a suitable sense). A proof of this claim for the particular cases of Volterra integro-differential equations and delay differential equations can be found in \cite[\S\S~VI.6--7]{EN00}, but similar results have actually been proven in a general context of abstract initial boundary value problems in \cite{Mu01} and \cite{KMN02}.

Checking the generator property for an osc operator matrix $\ca$ is particularly easy if the first factor of $\ca$ in Definition~\ref{def:osc-matrix} is a diagonal matrix, i.e., if $B=0$. The general case will then be obtained by means of similarity and perturbation arguments.

\begin{theorem}
\label{thm:6.1}
Assume that $\ca$ is an osc operator matrix such that $B=0$, i.e.,
\[
\ca=\begin{pmatrix}
\tilde{A} & 0 \\
0 & D
\end{pmatrix}
\begin{pmatrix}
I_X & 0 \\
L & I_Y
\end{pmatrix},
\]
defined on the usual domain introduced in Definition~\ref{def:osc-matrix}. Then $\ca$ is the generator of a strongly continuous semigroup $\ttt$ on $\cx$ if and only if the following conditions hold.
\begin{enumerate}[label=(\roman*)]
\item $\tilde A$ and $D$ generate strongly continuous semigroup $(T(t))_{t\ge0}$ and $(S(t))_{t\ge0}$ on $X$ and $Y$, respectively, and
\item for all $t\geq0$ the operator
\begin{equation}\label{eq:6.1}
\tilde Q(t):D(\tilde A^2)\rightarrow Y,\qquad \tilde Q(t)x:=D\int_0^t S(t-s)LT(s)x\;ds,
\end{equation}
extends to a bounded linear operator $Q(t)$ from $X$ to $Y$, and this extension satisfies
\[
\limsup\limits_{t\rightarrow0^+}\norm{Q(t)}_{\mathcal{L}(X,Y)}<\infty.
\]
\end{enumerate}
In this case, $(\ct(t))_{t\ge0}$ is given by
\begin{equation}\label{eq:6.2}
\ct(t)=\begin{pmatrix}
T(t) & 0 \\
Q(t) & S(t)
\end{pmatrix}.
\end{equation}
\end{theorem}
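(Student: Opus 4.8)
The plan is to reduce the statement, by an explicit similarity transformation, to a perturbation result for the diagonal operator $\operatorname{diag}(\tilde A, D)$ on $\cx$. The key observation is that, since $B=0$, for $\lambda\in\rho(\tilde A)\cap\rho(D)$ one has the factorization
\[
\lambda-\ca=\begin{pmatrix} \lambda-\tilde A & 0 \\ 0 & \lambda-D\end{pmatrix}\begin{pmatrix} I_X & 0 \\ R(\lambda,D)L & I_Y\end{pmatrix},
\]
(valid on the appropriate domains), which follows by the same computation as in the proof of Theorem~\ref{thm:5.1} specialized to $B=0$; in that case $A_\lambda=\tilde A$. Thus $\ca$ is similar to a lower-triangular operator matrix whose diagonal is $\operatorname{diag}(\tilde A,D)$ and whose lower-left corner is a $D$-bounded perturbation. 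The heuristic candidate for the semigroup is obtained by solving the coupled system $\dot x = \tilde A x$, $\dot y = Dy + \text{(coupling)}$: the first component decouples and yields $T(t)x$, while variation of parameters applied to the second equation, together with the relation $Lx(t)=LT(t)x$ and an integration by parts moving the derivative onto $S(t-s)$, produces exactly the operator $\tilde Q(t)$ in \eqref{eq:6.1}. This motivates the formula \eqref{eq:6.2} and shows why condition (ii) is the natural hypothesis.

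For the \emph{sufficiency} direction I would proceed as follows. First, assuming (i) and (ii), define $\ct(t)$ by \eqref{eq:6.2}; the boundedness of each $\ct(t)$ follows from (i) and the boundedness of $Q(t)$. Next I would verify the semigroup law $\ct(t+s)=\ct(t)\ct(s)$: the diagonal entries are immediate, and the off-diagonal entry reduces to the cocycle identity $Q(t+s)=Q(t)T(s)+S(t)Q(s)$, which I would check first on the core $D(\tilde A^2)$ using the definition \eqref{eq:6.1} and the semigroup properties of $T$ and $S$, then extend by density and the uniform bound from (ii). Strong continuity at $t=0$ for the diagonal entries is clear; for $Q(t)$ I would use the local uniform bound $\limsup_{t\to0^+}\norm{Q(t)}<\infty$ together with strong continuity on the dense set $D(\tilde A^2)$ (where $\tilde Q(t)x\to 0$ by the integral formula) to conclude $Q(t)\to 0$ strongly on all of $X$. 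Finally, I would compute the generator of $\ttt$ by differentiating $\ct(t)\binom{x}{y}$ at $t=0$ for $\binom{x}{y}$ in a suitable core, identify it with $\ca$ on $D(\ca)$, and use a resolvent argument (via the factorization above, for $\lambda$ large) to see that the generator is exactly $\ca$, not a proper extension.

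For the \emph{necessity} direction, suppose $\ca$ generates $\ttt$. Restricting to initial data of the form $\binom{x}{0}$ and $\binom{0}{y}$ and using that $D(\ca)$ contains $D(\tilde A)\times\{0\}$ and that $\ca$ leaves the structure triangular, one reads off that $\tilde A$ and $D$ generate semigroups $T(\cdot)$ and $S(\cdot)$ (this uses that the first component of $\ct(t)\binom{x}{0}$ solves $\dot x=\tilde Ax$, and that the $Y$-component of $\ct(t)\binom{0}{y}$ solves $\dot y = Dy$, so $S(t)=\pi_Y\ct(t)|_{\{0\}\times Y}$), giving (i); moreover the lower-left entry $Q(t):=\pi_Y\ct(t)|_{X\times\{0\}}$ is automatically bounded with $\sup_{t\in[0,1]}\norm{Q(t)}<\infty$ by uniform boundedness of $\ttt$ on compact time intervals, and by differentiating the orbit through $\binom{x}{0}$ with $x\in D(\tilde A^2)$ one identifies $Q(t)$ with the extension of $\tilde Q(t)$, giving (ii).

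The main obstacle I expect is the off-diagonal bookkeeping: justifying the integration by parts that turns the naive variation-of-parameters term $\int_0^t S(t-s)\tilde A$-coupling$\,ds$ into the form $D\int_0^t S(t-s)LT(s)x\,ds$ requires care about which domain one is working in (hence the appearance of $D(\tilde A^2)$ as a core), and proving the cocycle identity for $Q(t)$ and its strong continuity rests entirely on first establishing it on that core and then bootstrapping via the hypothesis $\limsup_{t\to0^+}\norm{Q(t)}<\infty$. Showing that this local bound genuinely upgrades pointwise-on-a-core convergence to strong convergence on all of $X$ — i.e. that (ii) is exactly the right closability/boundedness condition — is the technical heart of the argument.
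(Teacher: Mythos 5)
Your sufficiency argument coincides with the paper's: define $\ct(t)$ by \eqref{eq:6.2}, verify the semigroup law through the cocycle identity for $Q$ on the core $D(\tilde A^2)\times Y$ and extend by density, prove $Q(t)\to0$ strongly first on $D(\tilde A^2)$ and then on all of $X$ via the $\limsup$ hypothesis, and identify the generator through the resolvent. You are also right that the integration by parts is the technical heart here; the paper's device is to rewrite $Q(t)x$ with the \emph{bounded} factor $D(D-\lambda)^{-1}$ pulled out in front, so that the remaining bracket visibly tends to $0$ as $t\to0^+$ --- "by the integral formula" alone would not suffice, since $D$ is unbounded and sits outside the integral.

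The concrete gap is in your necessity argument. You assert that $D(\ca)$ contains $D(\tilde A)\times\{0\}$ and propose to identify $Q(t)$ with the extension of $\tilde Q(t)$ by differentiating the orbit through $\left(\begin{smallmatrix} x \\ 0 \end{smallmatrix}\right)$ for $x\in D(\tilde A^2)$. This inclusion is false in general: $\left(\begin{smallmatrix} x \\ 0 \end{smallmatrix}\right)\in D(\ca)$ requires $Lx\in D(D)$, which can fail for every $x\neq 0$ --- e.g.\ for the delay matrix \eqref{eq:2.2}, where $Lx$ is the constant function $-x$ and $D(D)$ consists of functions vanishing at $0$ (this is exactly the phenomenon recorded in Remark~\ref{rem:7.2}(1), where $DL$ has domain $\{0\}$). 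Hence the orbit through $\left(\begin{smallmatrix} x \\ 0 \end{smallmatrix}\right)$ is in general only a mild solution and cannot be differentiated, so your identification of $Q(t)$ with $\tilde Q(t)$ does not go through as stated. The paper circumvents this by comparing Laplace transforms: the lower-left entry of $R(\lambda,\ca)$ is $DR(\lambda,D)LR(\lambda,\tilde A)$, which by the convolution theorem equals the Laplace transform of $\tilde Q(\cdot)x$ for $x\in D(\tilde A^2)$, and injectivity of the Laplace transform then yields $\ct(t)_{21}=Q(t)$. Similarly, for part (i) the paper does not read generation off the orbits but off the estimate $\norm{R(\lambda,\tilde A)^n},\,\norm{R(\lambda,D)^n}\le\norm{R(\lambda,\ca)^n}$ obtained from the powers of the triangular resolvent, combined with a separate density check for $D(\tilde A)$ and $D(D)$; your orbit-based reading can be repaired, but it must rest on the triangular structure of the resolvent, not on any inclusion of $D(\tilde A)\times\{0\}$ in $D(\ca)$.
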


\begin{remark}
\label{rem:6.2}
\begin{enumerate}
\item We note that the operators $\tilde Q(t)$ are well defined. In fact, the function $\pos\ni s\mapsto LT(s)x$ is differentiable for all $x\in D(\tilde A^2)$, and this implies, as one can see by integrating by parts, that $\int_0^t S(t-s)LT(s)x\,ds\in D(D)$.
\item We also remark that there are examples of generators $\tilde A$ and $D$ such that the family of operators $\tilde Q(t)$ defined in \eqref{eq:6.1} does not satisfy the assumptions in (ii). Take, e.g., an unbounded generator $A$ of a strongly continuous group on a Banach space $X$. Then for
\[
\ca:=\begin{pmatrix}
A & 0 \\
0 & A
\end{pmatrix}
\begin{pmatrix}
I_X & 0 \\
I_X & I_X
\end{pmatrix}
\]
condition (i) of Theorem~\ref{thm:6.1} is satisfied, while (ii) is not, and hence $\ca$ is no generator on $\cx=X\times X$.
\end{enumerate}
\end{remark}

\begin{proof}[Proof of Theorem \ref{thm:6.1}]
We first assume that $\ca$ is a generator. Since $B=0$, the representation of the resolvent in \eqref{eq:5.1} becomes
\[
R(\lambda, \ca) = \begin{pmatrix}
R(\lambda, \tilde A) & 0 \\
DR(\lambda,D)LR(\lambda,\tilde A) & R(\lambda, D)
\end{pmatrix}.
\]
Hence, the powers are
\[
R(\lambda, \ca)^n = \begin{pmatrix}
R(\lambda,\tilde A)^n & 0 \\
* & R(\lambda, D)^n
\end{pmatrix}\quad\text{for all }n\in \mathbb{N}.
\]
Therefore, both $\norm{R(\lambda,\tilde A)^n}$ and $\norm{R(\lambda,D)^n}$ are dominated by $\norm{R(\lambda,\ca)^n}$, which by the general version of the Hille–Yosida Theorem (see \cite[Thm.~II.3.8]{EN00}) satisfies an estimate of the form
\[
\norm{R(\lambda,\ca)^n} \leq \frac{M}{(\operatorname{Re}\lambda-\omega)^n}
\]
for all $\lambda\in\mathbb{C}$ such that $\operatorname{Re}\lambda>\omega$ for some $\omega$ sufficiently large. Hence, in order to apply the Hille–Yosida Theorem to $\tilde A$ and $D$ we only need that both operators are densely defined. This follows for $\tilde A$ from the fact that $D(\ca)\subseteq D(\tilde A)\times Y$ is dense in $\cx=X\times Y$, hence $D(\tilde A)$ must be dense in $X$. To show the denseness of $D(D)$ in $Y$, we fix some $f\in Y$ and consider
\[
\lambda R(\lambda,\ca)\begin{pmatrix} 0 \\ f \end{pmatrix}=\begin{pmatrix} 0 \\ \lambda R(\lambda,D)f \end{pmatrix}\to \begin{pmatrix} 0 \\ f \end{pmatrix}
\]
as $\lambda\to+\infty$. Since $\lambda R(\lambda,D)f\in D(D)$, this shows that $D(D)$ is dense in $Y$ and thus $\tilde A$ and $D$ are generators of strongly continuous semigroups $(T(t))_{t\ge0}$ and $(S(t))_{t\ge0}$ on $X$ and $Y$, respectively.

In order to prove the claim on the operators $Q(t)$ we note that for $\lambda$ with real part large enough the Laplace transform of the semigroup $\ct(\cdot)$ coincides with the resolvent $R(\lambda,\ca)$ of its generator. In particular, we obtain for the Laplace transform of the lower left entry ${\mathcal T}(\cdot)_{21}$ of $\ct(\cdot)$ the equality
\[
\mathcal{L}({\mathcal T}(\cdot)_{21})(\lambda)= R(\lambda,\ca)_{21}=DR(\lambda,D)LR(\lambda,\tilde A).
\]
On the other hand, the convolution theorem for the Laplace transform implies that
\[
\mathcal{L}(\tilde Q(\cdot)x)(\lambda) = DR(\lambda,D)LR(\lambda,\tilde A)x
\]
for all $x\in D(\tilde A^2)$ and $\operatorname{Re}\lambda$ sufficiently large. This proves that $\mathcal{L}(\tilde Q(\cdot)x)(\lambda)=\mathcal{L}({\mathcal T}(\cdot)_{21}x)(\lambda)$ for all $x\in D(\tilde A^2)$ and $\operatorname{Re}\lambda$ large. The denseness of the domain $D(\tilde A^2)$ in $X$ (see \cite[Prop.~II.1.8]{EN00}) and the injectivity of the Laplace transformation then imply that $\ct(t)_{21}=Q(t)$. The remaining assertion on the boundedness of $\|Q(t)\|$ close to $t=0$ then follows from the boundedness of $(\ct(t))_{t\ge0}$ as $t\to0^+$.

To show the converse implication, let us first define $\ct(t)$ as in \eqref{eq:6.2}. We are going to show that the operator family $(\ct(t))_{t\geq0}$ is a strongly continuous semigroup and then verify that its generator is $\ca$. In fact, for $t,s\in\pos$, and $x\in D(\tilde A^2)$, $y\in Y$ we have
\[
\begin{aligned}
\ct(t)\ct(s)\begin{pmatrix} x \\ y \end{pmatrix} &=\begin{pmatrix} T(t) & 0 \\ Q(t) & S(t) \end{pmatrix}
\begin{pmatrix} T(s) & 0 \\ Q(s) & S(s) \end{pmatrix}\begin{pmatrix} x \\ y \end{pmatrix} \\
&= \begin{pmatrix} T(t) & 0 \\ Q(t) & S(t) \end{pmatrix}
\begin{pmatrix} T(s)x \\ D\int_0^s S(s-r)LT(r)x\;dr+S(s)y \end{pmatrix} \\
&= \begin{pmatrix} T(t)T(s)x \\ D\int_0^t S(t-q)LT(q)T(s)x\;dq+S(t)D\int_0^s S(s-r)L T(r)x\;dr+S(t)S(s)y \end{pmatrix} \\
&= \begin{pmatrix} T(t+s)x \\ D\int_s^{t+s} S(t+s-r)LT(r)x\;dr+ D\int_0^s S(t+s-r)LT(r)x\;dr+S(t+s)y \end{pmatrix} \\
&= \begin{pmatrix} T(t+s)x \\ D\int_0^{t+s} S(t+s-r)LT(r)x\;dr+S(t+s)y \end{pmatrix} \\
&= \ct(t+s)\begin{pmatrix} x \\ y \end{pmatrix}.
\end{aligned}
\]
From \cite[Prop.~II.1.8]{EN00} it follows that $D(\tilde A^2)\times Y\subseteq \cx$ is dense. Since the operators $\ct(t)$ and $\ct(s)$ are bounded, this proves the semigroup property $\ct(t)\ct(s)=\ct(t+s)$. Moreover,
\[
\ct(0)=\begin{pmatrix} T(0) & 0 \\ Q(0) & S(0) \end{pmatrix}=\begin{pmatrix} I_X & 0 \\ 0 & I_Y \end{pmatrix}=I_\cx,
\]
since $Q(0)x=\tilde Q(0)x=0$ for all $x\in D(\tilde A^2)$ and hence $Q(0)=0$ by the denseness of $D(\tilde A^2)$ in $X$.

To show strong continuity of $(\ct(t))_{t\ge0}$, i.e.,
\[
\lim_{t\rightarrow0^+}\ct(t)\begin{pmatrix} x \\ y \end{pmatrix} = \begin{pmatrix} \lim\limits_{t\rightarrow0^+}T(t)x \\ \lim\limits_{t\rightarrow0^+}\Bigl[Q(t)x+S(t)y\Bigr] \end{pmatrix}= \begin{pmatrix} x \\ y \end{pmatrix}
\]
for all $\begin{pmatrix} x \\ y \end{pmatrix}\in\cx$, it suffices to verify that operators $Q(t)$ converge to $0$ as $t\to0^+$. To this end we fix some $\lambda\in\rho(D)\cap\rho(\tilde A)$ and choose $x\in D(\tilde A^2)$. Then, integrating by parts and using the fact that $(e^{-\lambda t}T(t))_{t\ge0}$ and $(e^{-\lambda t}S(t))_{t\ge0}$ are strongly continuous semigroups with generators $\tilde A-\lambda$ and $D-\lambda$, respectively, we obtain
\[
\begin{aligned}
Q(t)x &= e^{\lambda t}D\int_0^t e^{-(t-s)\lambda}S(t-s)Le^{-\lambda s}T(s)x\;ds \\
&= e^{\lambda t}D(D-\lambda)^{-1}
\bigg(\Big[-e^{-(t-s)\lambda}S(t-s)Le^{-\lambda s}T(s)x\Big]^{s=t}_{s=0} \\
&\quad +\int_0^t e^{-(t-s)\lambda}S(t-s)Le^{-\lambda s}T(s)({\tilde A}-\lambda)x\;ds\bigg) \\
&= D(D-\lambda)^{-1}\Big[ S(t)Lx-LT(t)x+\int_0^t S(t-s)L(\tilde A-\lambda)^{-1}T(s)(\tilde A-\lambda)^2x\;ds\Big] \\
&= D(D-\lambda)^{-1}\Big[ S(t)Lx-L(\lambda-A)^{-1}T(t)(\lambda-A)x \\
&\quad +\int_0^t S(t-s)L(\tilde A-\lambda)^{-1}T(s)(\tilde A-\lambda)^2x\;ds\Big].
\end{aligned}
\]
Since $LR(\lambda,\tilde A)$ and $DR(\lambda,D)$ are bounded operators,
\begin{equation}\label{eq:6.3}
\begin{split}
\norm{Q(t)x} &\leq \norm{D(D-\lambda)^{-1}}\cdot\Big[\underbrace{\norm{S(t)Lx-L(\lambda-A)^{-1}T(t)(\lambda-A)x}}_{\rightarrow0\text{ as }t\to0^+}\\
&\qquad + \underbrace{\int_0^t\norm{S(t-s)L(\tilde A-\lambda)^{-1}T(s)}\cdot\norm{(\tilde A-\lambda)^2x} ds}_{\rightarrow0\text{ as }t\to0^+}\Big]. 
\end{split}
\end{equation}
This shows that $\lim\limits_{t\to0^+}Q(t)=0$ on $D(\tilde A^2)$. The assumption on the boundedness of $\norm{Q(t)}$ as $t\rightarrow 0^+$ and the density of $D(\tilde A^2)$ in $X$ then yield $\lim\limits_{t\to0^+}Q(t)=0$ on the whole space $X$.

Finally, we have to show that $\ca$ is actually the generator of $(\ct(t))_{t\geq0}$. To this end it suffices to note that, as already mentioned above, the Laplace transform of $Q(\cdot)$ for $\operatorname{Re}\lambda$ sufficiently large coincides with the Laplace transform of the lower left entry of the resolvent $R(\lambda,\ca)$.
\end{proof}

\begin{proposition}\label{prop:6.3}
Assume that $\tilde A$ is bounded on $X$. Then an osc operator matrix $\ca$ generates a strongly continuous (resp., analytic) semigroup on $\cx$ if and only if the operator $D+LB$, defined on $D(D)$, generates a strongly continuous (resp., analytic) semigroup on $Y$.
\end{proposition}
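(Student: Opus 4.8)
The plan is to strip away everything that is bounded and reduce to an operator matrix of the special shape $\left(\begin{smallmatrix}0&B\\0&D+LB\end{smallmatrix}\right)$, whose generation can be read off from its lower-right corner. Since $\tilde A$ is bounded, $[D(\tilde A)]=X$ with equivalent norms, hence $L\in\mathcal L(X,Y)$, so $\cs:=\left(\begin{smallmatrix}I_X&0\\ L&I_Y\end{smallmatrix}\right)$ is a bounded, boundedly invertible operator on $\cx$. A direct computation from Definition~\ref{def:osc-matrix} gives
\[
\cs\,\ca\,\cs^{-1}=\begin{pmatrix}\tilde A&B\\ L\tilde A&D+LB\end{pmatrix}=:\hat\ca,\qquad D(\hat\ca)=X\times D(D),
\]
and conjugation by $\cs$ preserves both the generator property and the analyticity of the generated semigroup, so $\ca$ generates a strongly continuous (resp., analytic) semigroup on $\cx$ iff $\hat\ca$ does. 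As the matrix $\left(\begin{smallmatrix}\tilde A&0\\ L\tilde A&0\end{smallmatrix}\right)$ is bounded on $\cx$, the bounded perturbation theorem (for strongly continuous and for analytic semigroups) shows that $\hat\ca$ generates iff
\[
\mathcal M:=\begin{pmatrix}0&B\\0&D+LB\end{pmatrix},\qquad D(\mathcal M)=X\times D(D),
\]
generates. Writing $G:=D+LB$ with $D(G):=D(D)$, it therefore suffices to prove: $\mathcal M$ generates a strongly continuous (resp., analytic) semigroup on $\cx$ \emph{if and only if} $G$ generates one on $Y$.

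For the implication ``$\mathcal M$ generates $\Rightarrow$ $G$ generates'' I would argue through resolvents. Let $\lambda\neq 0$ belong to $\rho(\mathcal M)$, taken in the right half-plane (resp.\ sector) on which the generation estimates for $\mathcal M$ hold. Solving $(\lambda-\mathcal M)\binom{x}{y}=\binom{f}{g}$ and using that $\lambda-\mathcal M=\left(\begin{smallmatrix}\lambda&-B\\0&\lambda-G\end{smallmatrix}\right)$ is block upper triangular, one sees that $\lambda-G\colon D(D)\to Y$ is bijective: surjectivity is the second-coordinate equation, and injectivity holds because $y\in\ker(\lambda-G)$ forces $\binom{\lambda^{-1}By}{y}\in\ker(\lambda-\mathcal M)=\{0\}$. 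Hence $(\lambda-G)^{-1}g=\pi_Y\bigl[R(\lambda,\mathcal M)\binom{0}{g}\bigr]\in\mathcal L(Y)$, so $\lambda\in\rho(G)$ and $G$ is closed; moreover $R(\lambda,\mathcal M)=\left(\begin{smallmatrix}\lambda^{-1}I_X&\lambda^{-1}BR(\lambda,G)\\0&R(\lambda,G)\end{smallmatrix}\right)$, so the lower-right corner of $R(\lambda,\mathcal M)^n$ equals $R(\lambda,G)^n$ and therefore $\norm{R(\lambda,G)^n}\le\norm{R(\lambda,\mathcal M)^n}$. The Hille--Yosida estimates (resp.\ the sectorial resolvent estimate) satisfied by $\mathcal M$ are thus inherited by $G$, which is moreover densely defined because $D(\mathcal M)=X\times D(D)$ is dense in $\cx$. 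Hence $G$ generates the required semigroup.

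For the converse, let $(V(t))_{t\geq 0}$ be the semigroup generated by $G$. The key preliminary observation is that $[D(D)]=[D(G)]$ as Banach spaces: the embedding $[D(D)]\hookrightarrow[D(G)]$ is continuous since $\norm{Gy}\le\norm{Dy}+\norm{LB}_{\mathcal L([D(D)],Y)}\norm{y}_D$, and because $G$ is closed (being a generator) the open mapping theorem yields the converse estimate. Consequently $B$ is bounded from $[D(G)]$ into $X$, and since $\int_0^tV(s)y\,ds\in D(G)$ with $G\int_0^tV(s)y\,ds=V(t)y-y$ for every $y\in Y$, the operator $S(t):=B\!\int_0^tV(s)\,ds$ belongs to $\mathcal L(Y,X)$, is strongly continuous, exponentially bounded, and satisfies $S(0)=0$. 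One then checks that $\mathcal V(t):=\left(\begin{smallmatrix}I_X&S(t)\\0&V(t)\end{smallmatrix}\right)$ is a strongly continuous semigroup on $\cx$ --- the semigroup law reduces to the cocycle identity $S(t+s)=S(s)+S(t)V(s)$, obtained by splitting the defining integral --- whose generator is $\mathcal M$ (either by a difference-quotient computation, using that $\tfrac1t\int_0^tV(s)y\,ds\to y$ in $[D(G)]$ for $y\in D(G)$, or by checking $\mathcal L(\mathcal V(\cdot))(\lambda)=R(\lambda,\mathcal M)$). In the analytic case the same $\mathcal V(t)$ works: from the displayed block form of $R(\lambda,\mathcal M)$ and the sectorial bound on $R(\lambda,G)$ one reads off that $\mathcal M$ is sectorial, hence generates an analytic semigroup. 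Combining the two implications with the reductions above finishes the proof.

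The similarity and bounded-perturbation reductions of the first paragraph are routine; the single delicate point is that $B$ is merely $D$-bounded (i.e.\ bounded on $[D(D)]$) rather than bounded on $Y$. In the direction ``$\Leftarrow$'' one genuinely needs $B$ to be bounded \emph{relative to $G=D+LB$}, and this is exactly what the identification $[D(D)]=[D(G)]$ --- obtained from the closedness of the generator $G$ together with the open mapping theorem --- provides; without it the convolution term $S(t)$ cannot be controlled. The direction ``$\Rightarrow$'' is comparatively soft, since the resolvent of $\mathcal M$ is block triangular with lower-right corner precisely $R(\lambda,G)$, which in passing also delivers the closedness of $D+LB$.
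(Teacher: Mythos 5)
Your proof is correct. The reduction in your first paragraph --- conjugating by $\cs=\left(\begin{smallmatrix}I_X&0\\ L&I_Y\end{smallmatrix}\right)$ (bounded because $\tilde A$ bounded forces $L\in\mathcal L(X,Y)$) and then splitting off the bounded block $\left(\begin{smallmatrix}\tilde A&0\\ L\tilde A&0\end{smallmatrix}\right)$ --- is exactly the paper's. Where you genuinely diverge is the final step: the paper disposes of $\mathcal M=\left(\begin{smallmatrix}0&B\\0&D+LB\end{smallmatrix}\right)$ on $X\times D(D)$ by invoking ``Theorem~\ref{thm:6.1} with the order of the spaces inverted'' together with the triangular-matrix corollaries of \cite{Na89}, whereas you prove the required equivalence from scratch. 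Your forward direction (block upper-triangular resolvent, so the Hille--Yosida, resp.\ sectorial, estimates for $\mathcal M$ are inherited by the lower-right corner $R(\lambda,D+LB)$) and your converse (the explicit semigroup $\left(\begin{smallmatrix}I_X&B\int_0^tV(s)\,ds\\0&V(t)\end{smallmatrix}\right)$, made legitimate by the norm equivalence $[D(D)]=[D(D+LB)]$ obtained from closedness of the generator plus the open mapping theorem) are both sound. This self-contained route is arguably a gain in rigor: even after swapping $X$ and $Y$, the matrix $\mathcal M$ is not literally of the form treated in Theorem~\ref{thm:6.1} (its coupling entry is a general relatively bounded operator $B$, not one of the shape $DL$ arising from an osc factorization), so the paper's one-line citation really rests on the more general results of \cite{Na89}; your argument supplies precisely the special case needed, and it correctly isolates and resolves the one delicate point, namely that $B$ is only relatively $(D+LB)$-bounded rather than bounded on $Y$.
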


\begin{proof}
We begin by remarking that due to the boundedness of $\tilde A$, and by the assumption (A$_3$), $L$ is bounded and hence the matrix
\[
\cs:=\begin{pmatrix} I_X & 0 \\ L & I_Y \end{pmatrix}
\]
is bounded and invertible on $\cx$. Therefore, $\ca$ is similar to
\[
{}^{s\!\!\!}{\ca}:=\begin{pmatrix} I_X & 0 \\ L & I_Y \end{pmatrix}\begin{pmatrix} \tilde{A} & B \\ 0 & D \end{pmatrix} = \cs\ca \cs^{-1}.
\]
However,
\[
{}^{s\!\!\!}{\ca}=\begin{pmatrix} \tilde A & B \\ L\tilde A & D+LB \end{pmatrix}=\begin{pmatrix} 0 & B \\ 0 & D+LB \end{pmatrix}+\begin{pmatrix} \tilde A & 0 \\ L\tilde A & 0 \end{pmatrix}=:\tilde\ca+\tilde\cb,
\]
where $\tilde\cb$ is bounded and $D(\tilde\ca)=X\times D(D)$. Hence, by bounded perturbation and similarity arguments, we can conclude that $\ca$ is a generator if and only if the matrix operator $\tilde\ca$ is a generator. By inverting in Theorem~\ref{thm:6.1} the order of the spaces $X$ and $Y$ (obtaining in this way a generation result for upper instead of lower triangular matrices, see also \cite[Cor.~3.2 and Cor.~3.3]{Na89}), one can see that this is the case if and only if $D+LB$ is a generator on $Y$.
\end{proof}

\begin{example}\label{ex:6.4}
We now apply the above results to delay differential equations. Let $A$ be bounded. Then, due to the assumptions on the various operators, also $\tilde A=A-\Phi L$ is bounded and Proposition~\ref{prop:6.3} applies. Hence, the osc operator matrix $\ca$ associated to \eqref{eq:ADDE} generates a strongly continuous semigroup on ${\mathcal X}$ if and only if
\[
D+L\Phi={d\over ds}-(|1\otimes I_X)\Phi\quad\text{defined on}\quad D\left(\frac{d}{ds}\right)=\left\{ f\in W^{1,p}\left([-1,0],X\right) : f(0)=0\right\}
\]
generates a strongly continuous semigroup on $L^p([-1,0],X)$. In particular, if $\dim X<\infty$, by the Desch–Schappacher perturbation theorem one can prove that ${d\over ds}+(|1\otimes I_X)\Phi$ is always a generator, hence \eqref{eq:ADDE} is always well-posed. This applies, for instance, if $\Phi$ is a linear combination of Dirac measures on the interval $[-1,0]$. For the details and further results see \cite[Example~3.16]{En99} and \cite{En97b}. We also emphasize that a different approach is needed when $\dim X=\infty$ and $\Phi$ is unbounded. We refer the reader to \cite{BP02} for further reading on this topic.
\end{example}

\begin{remark}\label{rem:6.5}
It is also possible to find conditions such that the osc operator matrix associated to \eqref{eq:VIDE} on $\cx=X\times C_0(\pos,X)$ and on $\cx=X\times L^p(\pos,X)$, respectively, generates a strongly continuous semigroup, see \cite{En99}.
\end{remark}

\section{ONE-SIDED COUPLED OPERATOR MATRICES AND POSITIVITY}
\label{sec:positivity}

We now consider positive semigroups of operators on Banach lattices, and refer to \cite{Na86} for a thorough discussion of this topic. We only recall that a strongly continuous semigroup is positive if and only if the resolvent of its generator is positive for sufficiently large $\lambda$ (see \cite[Thm.~VI.1.8]{EN00}). However, we would like to use our ``matrix decomposition'' of osc operators to give a characterization in terms of the matrix entries. In this section (and later on) whenever referring to positivity we assume $X$ and $Y$ to be Banach lattices.

\begin{theorem}\label{thm:7.1}
Under the Assumptions~\ref{assumptions:3.1} assume that the osc operator matrix
\begin{equation}\label{eq:7.1}
\ca:=\begin{pmatrix}
\tilde{A} & B \\
0 & D
\end{pmatrix}
\begin{pmatrix}
I_X & 0 \\
L & I_Y
\end{pmatrix},
\end{equation}
with $L\in \mathcal{L}(X,Y)$, generates a semigroup $\ttt$ and let $A_{\lambda} := \tilde{A} + \lambda B R(\lambda, D) L$ for $\lambda\in\rho(D)$. Then the following properties are equivalent.
\begin{enumerate}[label=(\alph*)]
\item $\ttt$ is positive.
\item There exists $\lambda_0 \in \mathbb{R}$ such that
\begin{enumerate}[label=(\roman*)]
\item $R(\lambda, A_{\lambda}) \ge 0$ for all $\lambda > \lambda_0$,
\item $R(\lambda, D) \ge 0$ for all $\lambda > \lambda_0$,
\item $By \ge 0$ for all $y\in D(D)_{+}$ and $L^*D^*y^* \ge 0$ for all $y^*\in D(D^*)_{+}$.
\end{enumerate}
\end{enumerate}
\end{theorem}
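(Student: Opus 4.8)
The plan is to exploit the resolvent formula \eqref{eq:5.1} from Theorem~\ref{thm:5.1}, which expresses $R(\lambda,\ca)$ entrywise in terms of $R(\lambda,A_\lambda)$, $R(\lambda,D)$, $B$, and $L_\lambda=-DR(\lambda,D)L$. Since $\ttt$ generates, by \cite[Thm.~VI.1.8]{EN00} positivity of $\ttt$ is equivalent to $R(\lambda,\ca)\ge 0$ for all sufficiently large real $\lambda$; on a product of Banach lattices $\cx=X\times Y$ (ordered componentwise) an operator matrix $\begin{pmatrix} P & Q \\ R & S \end{pmatrix}$ is positive if and only if each of the four entries is a positive operator. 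So the whole theorem reduces to checking, for $\lambda$ large, that the four entries in \eqref{eq:5.1} are simultaneously positive exactly when (i)--(iii) hold. For the implication (b)$\Rightarrow$(a) I would argue: from (i) the $(1,1)$-entry $R(\lambda,A_\lambda)\ge 0$; the $(1,2)$-entry is $R(\lambda,A_\lambda)BR(\lambda,D)$, which is a composition of three positive operators once one knows $B\ge 0$ on $D(D)_+$ (from (iii)) and $R(\lambda,D)\ge 0$ (from (ii)); for the $(2,1)$- and $(2,2)$-entries one needs $L_\lambda=-DR(\lambda,D)L\ge 0$, and here the adjoint reformulation in (iii), $L^*D^*y^*\ge 0$ for $y^*\in D(D^*)_+$, is exactly what gives positivity of $-DR(\lambda,D)L$ after noting that $(DR(\lambda,D)L)^* = L^*D^*R(\lambda,D^*)$ (the resolvent $R(\lambda,D^*)=R(\lambda,D)^*$ is positive by (ii) and duality) — with the sign bookkeeping giving $L_\lambda\ge 0$; then the $(2,1)$-entry $-L_\lambda R(\lambda,A_\lambda)$ and the $(2,2)$-entry $R(\lambda,D)-L_\lambda R(\lambda,A_\lambda)BR(\lambda,D)$ are positive as sums/compositions of positive operators.

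For the converse (a)$\Rightarrow$(b), assume $R(\lambda,\ca)\ge 0$ for all large $\lambda$ and read off the entries. The $(1,1)$-entry gives (i) directly: $R(\lambda,A_\lambda)\ge 0$. For (ii), test $R(\lambda,\ca)$ against vectors of the form $\begin{pmatrix}0\\y\end{pmatrix}$ with $y\ge 0$: since $B$ acts only through the $(1,2)$-slot and one can also isolate $R(\lambda,D)$ as the ``asymptotic'' part, $\lambda R(\lambda,D)y\to y$; more cleanly, the $(2,2)$-entry is $R(\lambda,D)-L_\lambda R(\lambda,A_\lambda)BR(\lambda,D)$ and the $(2,1)$-entry is $-L_\lambda R(\lambda,A_\lambda)$, so combining these (e.g. applying the matrix to $\begin{pmatrix}x\\y\end{pmatrix}$ and $\begin{pmatrix}0\\y\end{pmatrix}$ and manipulating) one recovers positivity of $R(\lambda,D)$ and of $L_\lambda R(\lambda,A_\lambda)$; positivity of $R(\lambda,D)$ is (ii). For (iii), positivity of $B$ on $D(D)_+$ follows from the $(1,2)$-entry together with $R(\lambda,A_\lambda)^{-1}=\lambda-A_\lambda$ and $R(\lambda,D)^{-1}=\lambda-D$ being applied to positive elements in the respective domains (so that $B y = R(\lambda,A_\lambda)^{-1}\bigl[R(\lambda,\ca)_{12}\bigr]R(\lambda,D)^{-1}y$ for $y\in D(D)_+$, once one checks the relevant elements stay in the right domains), and the dual condition $L^*D^*y^*\ge 0$ follows analogously by passing to adjoints, using that $R(\lambda,\ca)^*=R(\lambda,\ca^*)\ge 0$ and extracting the entry $(R(\lambda,\ca)^*)_{12}=(-L_\lambda R(\lambda,A_\lambda))^*$, from which $L_\lambda^*=( -DR(\lambda,D)L)^* = -L^*D^*R(\lambda,D^*)\ge 0$ and hence, letting $\lambda\to\infty$ so that $-\lambda R(\lambda,D^*)y^*\to -y^*$ while $D^*$-invariance is handled on the core, one peels off $L^*D^*y^*\ge 0$.

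The main obstacle I anticipate is the careful handling of the unbounded operators $B$, $D$, $D^*$ and their domains in the ``only if'' direction: the entries of \eqref{eq:5.1} are bounded operators, but recovering positivity of $B$ (unbounded from $[D(D)]$ to $X$) and of the compositions $DR(\lambda,D)L$ requires multiplying by $\lambda-A_\lambda$ and $\lambda-D$ and verifying that the test elements indeed lie in $D(A_\lambda)=D(\tilde A)$ and $D(D)$, and — for the adjoint/dual condition in (iii) — checking that $D(D^*)_+$ is rich enough and that the limit procedure $\lambda\to+\infty$ legitimately isolates $L^*D^*y^*$. A secondary delicate point is justifying the identity $(DR(\lambda,D))^* = D^*R(\lambda,D^*)$ (equivalently $R(\lambda,D)^* = R(\lambda,D^*)$ and $D$ closed, so $D^*$ is well-behaved) and that positivity of $R(\lambda,D)$ transfers to positivity of $R(\lambda,D^*)$ via duality of the cones; once these domain/adjoint technicalities are in place, the rest is a routine bookkeeping of which products of positive operators appear in each of the four matrix entries.
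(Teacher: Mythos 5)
Your direction (b)$\Rightarrow$(a) follows the paper's route: translate (b.iii) into positivity of $B$ on $D(D)_+$ and of $DR(\lambda,D)L$, then read off entrywise positivity of $R(\lambda,\ca)$ from \eqref{eq:5.1} and invoke the resolvent characterization of positive semigroups. (Watch the sign, though: from $(DR(\lambda,D)L)^*=L^*D^*R(\lambda,D^*)$ together with (iii) and (ii) you get $DR(\lambda,D)L\ge0$, i.e.\ $-L_\lambda\ge0$, not $L_\lambda\ge0$ as you write; it is $-L_\lambda\ge0$ that makes the $(2,1)$-entry $-L_\lambda R(\lambda,A_\lambda)$ positive. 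This is a bookkeeping slip, not a structural one.)

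The converse is where your plan has two genuine gaps, and you have half-anticipated only one of them. First, for (b.ii): entrywise positivity of $R(\lambda,\ca)$ tells you that $R(\lambda,D)-L_\lambda R(\lambda,A_\lambda)BR(\lambda,D)\ge0$ and $-L_\lambda R(\lambda,A_\lambda)\ge0$, but $R(\lambda,D)$ is then the \emph{difference} of the $(2,2)$-entry and a term whose sign you do not yet control (you do not know $BR(\lambda,D)\ge0$ at this stage), so no combination of the entries isolates $R(\lambda,D)$; and the observation $\lambda R(\lambda,D)y\to y$ says nothing about positivity at any fixed $\lambda$. The paper obtains (ii) by a different device: it passes to $\ca_v:=\ca-\left(\begin{smallmatrix}vI_X&0\\0&0\end{smallmatrix}\right)$, which still generates a positive semigroup for every $v\ge0$, and shows that the $(2,2)$-entry of $R(\lambda,\ca_v)$ converges to $R(\lambda,D)$ as $v\to\infty$. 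Second, for (b.iii): your identity $By=(\lambda-A_\lambda)\,[R(\lambda,\ca)]_{12}\,(\lambda-D)y$ is algebraically correct but cannot yield positivity, because $\lambda-D$ and $\lambda-A_\lambda$ are not positive operators --- for $y\in D(D)_+$ the element $(\lambda-D)y$ has no definite sign (think of $D$ a Dirichlet Laplacian), so you are sandwiching a positive operator between two operators of indefinite sign. Extracting positivity of the off-diagonal pieces $B$ and $L^*D^*$ of the generator of a positive semigroup is precisely the content of the \emph{positive minimum principle}, which the paper applies to $\ca$ and to $\ca^*$; without it (or an equivalent substitute) the ``only if'' direction does not close.
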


\begin{remark}\label{rem:7.2}
\begin{enumerate}
\item By $D^*$ and $L^*$ in (b.iii) we denote the adjoints of the operators $D$ and $L$. Observe that the operator $DL$ (which is the lower left entry of the matrix one obtains by directly computing the product in \eqref{eq:7.1}) does not make sense in general (or, more precisely, may have domain $\{0\}$ as in the case of the matrix associated to the delay equation \eqref{eq:ADDE}), while the operator $L^*D^*$ has domain $D(D^*)$ since $L$ is bounded.
\item One can show (see \cite[Remark~3.5]{En97a}) that the assumption (b.iii) is equivalent to
\begin{quote}
(iii') $By \ge 0$ for all $y\in D(D)_{+}$ and $-L_\lambda\ge0$ for all $\lambda>\lambda_0$,
\end{quote}
where $L_\lambda:=-DR(\lambda,D)L$, avoiding in this way the use of the adjoint operators $D^*$ and $L^*$.
\end{enumerate}
\end{remark}

\begin{proof}[Sketch of proof]
We only give the main steps and refer the reader to \cite[Thm.~3.2]{En97a} for the details.

(b)$\Rightarrow$(a). Recalling the explicit formula \eqref{eq:5.1} for resolvent of $\ca$, it suffices to observe that under assumption (b) (with (iii) replaced by (iii'), see Remark~\ref{rem:7.2}(2)) it holds that $R(\lambda,\ca)\ge 0$ for all $\lambda > \lambda_0$. Therefore $\ttt$ is positive by \cite[Theorem VI.1.8]{EN00}.

(a)$\Rightarrow$(b). Assume $\ttt$ to be positive. Then there exists a suitable $\lambda_0$ such that $R(\lambda,\ca)\ge 0$ for all $\lambda > \lambda_0$. This implies that each entry of the resolvent operator matrix must be positive (to check this, it suffices to take $x\in X$, $y\in Y$, and then consider $R(\lambda,\ca)\begin{pmatrix} x \\ 0 \end{pmatrix}$ and $R(\lambda,\ca)\begin{pmatrix} 0 \\ y \end{pmatrix}$). Hence, in particular the upper-left entry $R(\lambda,A_\lambda)$ is positive for $\lambda$ large enough proving (b.i).

(b.iii) follows from the \textit{positive minimum principle} applied to $\ca$ and its adjoint $\ca^*$, see \cite[Lemma~2.7]{En97a} for details.

In order to prove (b.ii) consider
\[
\ca_v = \ca - \begin{pmatrix} v I_X & 0 \\ 0 & 0 \end{pmatrix}, \quad v\ge 0,
\]
and observe that with $\ca$ also $\ca_v$ generates a positive semigroup. Hence, the resolvent $R(\lambda, \ca_v)$ is positive for large $\lambda$ and, by some further calculations, one obtains for the lower right entry of $R(\lambda, \ca_v)$
\[
0\le \lim_{v\to\infty} R(\lambda,\ca_v)_{22}=R(\lambda, D),
\]
which shows (ii).
\end{proof}

One important reason to consider positive semigroups is the fact that positivity greatly simplifies the study of the asymptotic behavior. Recall that the \textit{spectral bound} of a closed operator $A$ is defined as
\[
s(A) := \sup\{ \operatorname{Re} \lambda : \lambda \in\sigma(A)\}.
\]
In case $A$ is the generator of a strongly continuous semigroup $\left(T(t)\right)_{t\ge 0}$, it is always dominated by the \textit{growth bound}
\[
\omega_0 := \inf\left\{ w\in \mathbb{R} : \exists M_w\ge 1 : \;\norm{T(t)} \leq M_w e^{wt}\enspace\forall\, t\ge 0\right\}.
\]
These two bounds differ in general, but are equal in the special case of positive semigroups on $L^p$ spaces (cf. \cite[Thm.~VI.1.15]{EN00}) and of analytic semigroups (cf. \cite[Cor.~IV.3.12]{EN00}). Also, it is usually difficult to determine $s(A)$. However, if $A$ is self-adjoint, or if it generates a positive semigroup, then $s(A)\in\sigma(A)$ (cf. \cite[Thm.~VI.1.10]{EN00}) and hence it suffices to calculate the real spectral values of $A$ in order to determine $s(A)$. Another nice property says that a positive strongly continuous semigroup is exponentially stable if and only if the spectral bound of its generator is strictly negative (cf. \cite[Prop.~VI.1.14]{EN00}). Combining the general theory of positive semigroups with the above results, one obtains the following characterization.

\begin{corollary}\label{cor:7.3}
A positive strongly continuous semigroup $\ttt$ generated by an osc operator matrix $\ca$ (as in Theorem~\ref{thm:7.1}) is exponentially stable if and only if the spectral bounds $s(\tilde A)$ and $s(D)$ satisfy
\[
s(\tilde A)<0\quad\text{and}\quad s(D)<0.
\]
\end{corollary}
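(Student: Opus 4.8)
The plan is to reduce the exponential stability of $\ttt$ to the condition $s(\ca)<0$, and then to compute $s(\ca)$ in terms of $s(\tilde A)$ and $s(D)$ using the spectral characterization from Theorem~\ref{thm:5.1}. First I would invoke the fact recalled just before the statement: since $\ttt$ is a positive strongly continuous semigroup, it is exponentially stable if and only if $s(\ca)<0$ (cf.\ \cite[Prop.~VI.1.14]{EN00}). So the whole task is to show
\[
s(\ca)<0 \iff \bigl(s(\tilde A)<0 \text{ and } s(D)<0\bigr).
\]

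For the implication ``$\Leftarrow$'', assume $s(\tilde A)<0$ and $s(D)<0$. Pick any $\lambda\in\mathbb{C}$ with $\operatorname{Re}\lambda\ge 0$; then in particular $\lambda\in\rho(D)$ since $s(D)<0$. By Theorem~\ref{thm:5.1}, $\lambda\in\rho(\ca)$ if and only if $\lambda\in\rho(A_\lambda)$, where $A_\lambda=\tilde A+\lambda BR(\lambda,D)L$. So I must check that $A_\lambda$ has no spectrum in the closed right half-plane, at least in a neighbourhood of it that also covers a strip $\operatorname{Re}\lambda > -\varepsilon$ (to get $s(\ca)$ strictly negative, not merely $\le 0$). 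Here is where positivity enters: by Theorem~\ref{thm:7.1}(b.i)--(b.ii), $R(\mu,A_\mu)\ge 0$ and $R(\mu,D)\ge 0$ for all real $\mu$ large enough, and one can run a comparison/perturbation argument showing that the positive semigroup generated by $\tilde A$ dominates (in an appropriate sense) the diagonal-block behaviour, so that $s(A_\lambda)$ stays bounded away from the imaginary axis for $\operatorname{Re}\lambda$ in a small strip around $0$; concretely, for real $\mu\ge 0$ one has $A_\mu\ge\tilde A$ as positive perturbations (using $B\ge0$ on $D(D)_+$ and $-L_\mu\ge0$ from Remark~\ref{rem:7.2}(2)), whence $s(A_\mu)\ge s(\tilde A)$ is the wrong direction — so instead I would argue via the resolvent: the formula \eqref{eq:5.1} exhibits $R(\lambda,\ca)$ as a $2\times2$ matrix whose entries are built from $R(\lambda,A_\lambda)$, $R(\lambda,D)$, $B$, $L_\lambda$, all of which extend holomorphically and remain bounded for $\operatorname{Re}\lambda>-\varepsilon$ once $s(\tilde A)<0$ and $s(D)<0$ (the boundedness of $R(\lambda,A_\lambda)$ on such a strip following from $s(\tilde A)<0$ together with the fact that $\lambda BR(\lambda,D)L$ is a bounded holomorphic perturbation there, using that $L\in\mathcal L(X,Y)$ and $B\in\mathcal L([D(D)],X)$). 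This gives $s(\ca)<0$.

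For the implication ``$\Rightarrow$'', assume $s(\ca)<0$. Then $0\in\rho(\ca)$, so by \eqref{eq:5.1} with $\lambda=0$ — which is legitimate as soon as $0\in\rho(D)$, and indeed $s(D)\le s(\ca)<0$ must hold because, reading off the lower-right block structure, $R(\lambda,\ca)$ being holomorphic and bounded on $\operatorname{Re}\lambda\ge -\delta$ forces $R(\lambda,D)$ to be too (this is the same domination argument used in the proof of Theorem~\ref{thm:6.1}, where $\norm{R(\lambda,D)^n}\le\norm{R(\lambda,\ca)^n}$ via the triangular form of the resolvent powers) — one gets $s(D)<0$; and then $\lambda=0\in\rho(\ca)\cap\rho(D)$ gives, via Theorem~\ref{thm:5.1}, that $0\in\rho(A_0)=\rho(\tilde A)$, and more generally $s(\ca)<0$ propagates to a strip on which $R(\lambda,\ca)_{11}=R(\lambda,A_\lambda)$ is bounded, forcing $s(A_0)<0$; finally $s(\tilde A)=s(A_0)<0$ since $A_0=\tilde A$.

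The main obstacle is the bookkeeping in the ``$\Leftarrow$'' direction: one must show that $s(\tilde A)<0$ and $s(D)<0$ together force $s(A_\lambda)$ (and hence, by Theorem~\ref{thm:5.1}, $\rho(\ca)$) to control a full strip $\operatorname{Re}\lambda>-\varepsilon$ uniformly, rather than just the imaginary axis pointwise. This is exactly where positivity is indispensable: for a general semigroup $s(\ca)<0$ need not follow from the factor bounds, but for positive semigroups the resolvent positivity in Theorem~\ref{thm:7.1} lets one compare $R(\lambda,\ca)$ with the diagonal resolvent $\operatorname{diag}(R(\lambda,\tilde A),R(\lambda,D))$ and transfer uniform boundedness. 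I would handle this by a standard argument: positivity plus $s(\tilde A),s(D)<0$ yields that the scalar-majorized resolvents are uniformly bounded on a strip, and the off-diagonal entries in \eqref{eq:5.1}, being sandwiched between positive operators, inherit the bound — hence $\omega_0(\ct)<0$, equivalently exponential stability.
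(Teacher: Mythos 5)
Your opening reduction --- exponential stability of the positive semigroup $\ttt$ is equivalent to $s(\ca)<0$, so everything comes down to $s(\ca)<0\iff s(\tilde A)<0$ and $s(D)<0$ --- is correct and is indeed where the paper starts; but note the paper gives no proof at all, deferring to \cite[Thm.~4.1]{En97a}. The rest of your outline has genuine gaps, concentrated exactly at the point you yourself flag as ``the main obstacle''. The identity $A_0=\tilde A$ connects $\sigma(\tilde A)$ only to the single condition $0\in\rho(\ca)$; to control $s(\ca)$ you must exclude $\lambda\in\sigma(A_\lambda)$ for \emph{every} $\lambda$ in a right half-plane, and $A_\lambda=\tilde A+\lambda BR(\lambda,D)L$ is a genuinely $\lambda$-dependent bounded perturbation that can move the spectrum of $\tilde A$ arbitrarily --- ``bounded holomorphic perturbation'' carries no smallness. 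Your proposed mechanism, uniform boundedness of the resolvents on a strip, also fails at the outset: on a general Banach lattice $s(\tilde A)<0$ does not imply that $R(\lambda,\tilde A)$ is uniformly bounded on any strip $\operatorname{Re}\lambda>-\varepsilon$ (spectral bound, abscissa of boundedness of the resolvent, and growth bound may all differ), and your concluding ``$\omega_0(\ct)<0$'' asserts uniform exponential stability, which is strictly more than the corollary claims. In the converse direction, the domination $\norm{R(\lambda,D)^n}\le\norm{R(\lambda,\ca)^n}$ you import from the proof of Theorem~\ref{thm:6.1} is unavailable here, since there $B=0$ and the resolvent was triangular, whereas by \eqref{eq:5.1} the lower right entry is $R(\lambda,D)-L_\lambda R(\lambda,A_\lambda)BR(\lambda,D)$; and $0\in\rho(\tilde A)$ alone does not give $s(\tilde A)<0$.

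What actually closes these gaps is positivity used quantitatively, as in \cite[Thm.~4.1]{En97a}. First, since $\ttt$ is positive, $s(\ca)\in\sigma(\ca)$ whenever $\sigma(\ca)\neq\emptyset$ (cf.\ \cite[Thm.~VI.1.10]{EN00}), which reduces the whole question to \emph{real} $\lambda$, so that one only needs $\mu\notin\sigma(A_\mu)$ for real $\mu\ge0$. Second, for real $\lambda$ one computes $\frac{d}{d\lambda}\bigl(\lambda R(\lambda,D)L\bigr)=R(\lambda,D)\,L_\lambda\le0$ by (b.ii) and condition (iii') of Remark~\ref{rem:7.2}, so that $\lambda\mapsto\lambda BR(\lambda,D)L$ is a decreasing family of bounded operators (here $B\ge0$ on $D(D)_+$ enters) and hence $\lambda\mapsto s(A_\lambda)$ is decreasing; combined with the characterization of the spectral bound of a resolvent-positive operator by positivity of the resolvent at real points, this makes $\lambda\mapsto s(A_\lambda)-\lambda$ strictly decreasing and identifies $s(\ca)$ with its unique zero, whence $s(\ca)<0\iff s(A_0)=s(\tilde A)<0$. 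Finally $s(D)\le s(\ca)$ follows from the entrywise domination $0\le R(\lambda,D)\le R(\lambda,\ca)_{22}$ in \eqref{eq:5.1}, which again uses $-L_\lambda\ge0$, $B\ge0$ and $R(\lambda,A_\lambda)\ge0$ from Theorem~\ref{thm:7.1}. Without the reduction to the real axis and the monotonicity of $\lambda\mapsto A_\lambda$, your outline does not constitute a proof.
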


For a proof we refer to \cite[Thm.~4.1]{En97a}.

\begin{examples}\label{ex:7.4}
\begin{enumerate}
\item (ADDE). Let $\ttt$ be the semigroup generated by the operator $\ca$ defined in \eqref{eq:ADDE}. Keeping in mind that, for $D=\dds$ defined on $W^{1,p}([-1,0],X)$ we have $s(D)=-\infty$, and that $R(\lambda,D)$ is positive for all $\lambda\in\mathbb{C}$, Theorem~\ref{thm:7.1} reads as follows in this case.

\begin{quote}
\itshape $\ttt$ is positive if and only if the following two conditions hold.
\begin{enumerate}[label=(\roman*)]
\item There exists $\lambda_0\in \mathbb{R}$ such that $R(\lambda,A+\Phi(\varepsilon_{\lambda}\otimes I_X))\ge 0$ for all $\lambda\geq\lambda_0$,
\item $\Phi f \ge 0$ for all $f\in D(D)_+ = \lbrace f\in W^{1,p}([-1,0],X): f\ge0,\, f(0)=0 \rbrace$.
\end{enumerate}
\end{quote}

It is possible to show that both conditions are satisfied if
\[
(A\delta_0+\Phi)f\ge0\quad\text{for all}\quad0\le f\in W^{1,p}([-1,0],X).
\]
From Corollary~\ref{cor:7.3} we obtain the following stability result for \eqref{eq:ADDE}.

\begin{quote}
If $\ttt$ is positive, then $\ttt$ is exponentially stable if and only if $s(A+\Phi(|1\otimes I_X))<0$.
\end{quote}

\item Consider now a special case of \eqref{eq:ADDE} where we take $\Phi = \Psi\delta_{-r}$ for some $r\in [0,1]$ and $\Psi \in\mathcal{L}(X)$. Then the delay differential equation becomes
\[
\begin{cases}
\dot{u}(t) = Au(t) + \Psi u(t-r), & t \geq 0, \\
u(s) = h(s), & s\in [-1,0].
\end{cases}
\tag{*}\label{eq:ADDE-star}
\]
Assume that the associated osc operator matrix $\ca$ generates a positive semigroup. Then the solution of this equation is exponentially stable if and only if
\[
s\left(A+ \Psi\delta_{-r}(|1 \otimes I_X)\right)= s(A+\Psi) <0.
\]
This is exactly the case when the solutions of the \textit{undelayed} equation
\[
\begin{cases}
\dot{u}(t) = \left(A+\Psi\right)u(t), & t \geq 0, \\
u(s) = h(s), & s\in [-1,0]
\end{cases}
\]
are exponentially stable! Hence,
\begin{quote}
\itshape the stability of the solutions of \eqref{eq:ADDE-star} is independent of the delay $r$.
\end{quote}
This result deeply relies on the positivity assumptions and is in sharp contrast to the general (nonpositive) situation (see \cite{BP02}).
\end{enumerate}
\end{examples}

\begin{remark}\label{rem:7.5}
It is possible to find conditions such that the osc operator matrix associated to \eqref{eq:VIDE} on $\cx=X\times C_0(\pos,X)$ or on $\cx=X\times L^p(\pos,X)$, respectively, generates a positive strongly continuous semigroup. Under suitable assumptions, stability properties can also be discussed, see \cite{En02}.
\end{remark}

\section{APPLICATION TO INITIAL--BOUNDARY VALUE PROBLEMS}
\label{sec:application-ibvp}

In the final part of these notes we discuss an application of the theory of osc operator matrices to abstract boundary feedback systems. To that purpose, we consider
\begin{itemize}
\item a linear operator $(A,D(A))$ on the Banach space $\dx$,
\item a linear operator $(D_m,D(D_m))$ on the Banach space $X$, called \textit{maximal operator},
\item a linear \textit{feedback operator} $(B,D(B))$ from $X$ to $\dx$ with $D(D_m)\subseteq D(B)$,
\item a linear \textit{boundary operator} $\cp:D(D_m)\rightarrow\partial X$.
\end{itemize}

With these objects we call
\[
\begin{cases}
\dot{x}(t) = Ax(t)+Bu(t), & t\geq0, \\
\dot{u}(t) = D_m u(t), & t\geq0, \\
x(t) = \cp u(t), & t\geq0, \\
x(0) = x_0, \\
u(0) = u_0,
\end{cases}
\tag{AIBVP}\label{eq:AIBVP}
\]
an \textit{abstract initial--boundary value problem on the state space $X$ and the boundary space $\dx$}, cf. \cite{KMN02}, where also inhomogeneous problems are treated by means of osc theory. In order to apply the theory of osc operator matrices, we impose the following (cf. \cite{CENN02}).

\begin{definition}[Assumptions 8.1]
\label{assumptions:8.1}
\begin{enumerate}[label=(\alph*)]
\item The operator $A$ generates a strongly continuous semigroup $(T(t))_{t\geq0}$ on $\partial X$.
\item The restriction $D:=D_m|_{\ker(\cp)}$ generates a strongly continuous semigroup $(S(t))_{t\geq0}$ on $X$.
\item The boundary operator $\cp$ is surjective.
\item The operator $\begin{pmatrix} \cp \\ D_m \end{pmatrix}:D(D_m)\subseteq X \rightarrow X\times\dx$ is closed.
\end{enumerate}
\end{definition}

As before, our aim is to rewrite \eqref{eq:AIBVP} as an abstract Cauchy problem \eqref{eq:ACP} on the product space $\cx:=\dx\times X$. The following key lemma is essentially due to Greiner (\cite[Lemma 1.2]{Gr87}, see also \cite[Lemma~2.2]{CENN02}).

\begin{lemma}\label{lem:8.2}
Let $\lambda\in\rho(D)$. Then the restriction $\cp|_{\ker(\lambda-D_m)}$ is invertible and its inverse
\[
\cd_\lambda:=(\cp|_{\ker(\lambda-D_m)})^{-1}:\partial X\rightarrow\ker(\lambda-D_m)\subseteq X,
\]
called the associated \textit{Dirichlet operator}, is bounded. Moreover, if $D$ is invertible, $\cd_0$ is related to $\cd_\lambda$ by
\begin{equation}\label{eq:8.1}
\cd_\lambda=(I-\lambda R(\lambda,D))\cd_0.
\end{equation}
\end{lemma}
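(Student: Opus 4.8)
The plan is to prove the three assertions of Lemma~\ref{lem:8.2} in turn, starting from the observation that the pair $\begin{pmatrix}\cp\\ D_m\end{pmatrix}$ is closed (Assumption 8.1(d)) and that $\cp$ is surjective with $D=D_m|_{\ker\cp}$ generating a semigroup (Assumptions 8.1(b),(c)). First I would establish injectivity of $\cp|_{\ker(\lambda-D_m)}$: if $u\in\ker(\lambda-D_m)$ with $\cp u=0$, then $u\in\ker\cp$, so $u\in D(D)$ and $(\lambda-D)u=0$; since $\lambda\in\rho(D)$ this forces $u=0$. For surjectivity, given $\varphi\in\dx$ I would use surjectivity of $\cp$ to pick some $v\in D(D_m)$ with $\cp v=\varphi$, then correct it: the element $w:=(\lambda-D_m)v\in X$ admits $R(\lambda,D)w\in D(D)\subseteq D(D_m)$, and $u:=v-R(\lambda,D)w$ satisfies $(\lambda-D_m)u=(\lambda-D_m)v-(\lambda-D_m)R(\lambda,D)w=w-w=0$ (using $D_m=D$ on $D(D)$), while $\cp u=\cp v-\cp R(\lambda,D)w=\varphi-0=\varphi$ since $R(\lambda,D)w\in D(D)=\ker\cp$. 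Hence $\cp|_{\ker(\lambda-D_m)}$ is a bijection onto $\dx$, and $\cd_\lambda$ is well defined.

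Next I would prove boundedness of $\cd_\lambda$ by a closed graph argument. The operator $\cd_\lambda$ maps $\dx$ into $\ker(\lambda-D_m)\subseteq D(D_m)$; equip $D(D_m)$ with the graph norm of the closed operator $\begin{pmatrix}\cp\\ D_m\end{pmatrix}$, i.e. $\|u\|+\|\cp u\|+\|D_m u\|$, which makes $[D(D_m)]$ a Banach space by Assumption 8.1(d). On $\ker(\lambda-D_m)$ this graph norm is equivalent to $\|u\|+\|\cp u\|$ (as $D_m u=\lambda u$ there), so to show $\cd_\lambda\colon\dx\to X$ is bounded it suffices to show it is bounded as a map $\dx\to[D(D_m)]$, and for that I would invoke the closed graph theorem: if $\varphi_n\to\varphi$ in $\dx$ and $\cd_\lambda\varphi_n\to u$ in $[D(D_m)]$, then $u_n:=\cd_\lambda\varphi_n\to u$ in $X$ with $D_m u_n\to D_m u$ and $\cp u_n=\varphi_n\to\varphi$, so by closedness of $\begin{pmatrix}\cp\\ D_m\end{pmatrix}$ we get $\cp u=\varphi$ and $D_m u=\lim\lambda u_n=\lambda u$, whence $u\in\ker(\lambda-D_m)$ and $u=\cd_\lambda\varphi$. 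Thus the graph is closed and $\cd_\lambda$ is bounded.

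For the final identity \eqref{eq:8.1}, assume $D$ invertible so $0\in\rho(D)$ and $\cd_0$ is defined. Fix $\varphi\in\dx$ and set $v:=\cd_0\varphi\in\ker D_m$ with $\cp v=\varphi$, and $u:=\cd_\lambda\varphi\in\ker(\lambda-D_m)$ with $\cp u=\varphi$. Then $v-u\in D(D_m)$ with $\cp(v-u)=0$, so $v-u\in D(D)$, and $(\lambda-D)(v-u)=(\lambda-D_m)v-(\lambda-D_m)u=\lambda v-0=\lambda v$, giving $v-u=\lambda R(\lambda,D)v$, i.e. $u=(I-\lambda R(\lambda,D))v=(I-\lambda R(\lambda,D))\cd_0\varphi$. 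Since $\varphi$ was arbitrary this is \eqref{eq:8.1}. The main obstacle is really step two, the boundedness of $\cd_\lambda$: one must be careful to run the closed graph theorem in the correct topology (the graph norm of the \emph{closed} pair $\begin{pmatrix}\cp\\ D_m\end{pmatrix}$), since $\cd_\lambda$ as a map into $X$ with its own norm need not have an obviously closed graph without first passing through $[D(D_m)]$; everything else is elementary algebra with the resolvent.
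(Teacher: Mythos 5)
Your proof is correct. The paper itself gives no argument for this lemma --- it only cites Greiner's original result and \cite{CENN02} --- and what you have written is precisely the standard proof from those sources: injectivity from $\lambda\in\rho(D)$, surjectivity via the decomposition $u=v-R(\lambda,D)(\lambda-D_m)v$ of an arbitrary preimage $v$ of $\varphi$ under $\cp$, boundedness by the closed graph theorem using the closedness of $\left(\begin{smallmatrix}\cp\\ D_m\end{smallmatrix}\right)$, and the resolvent identity \eqref{eq:8.1} by applying $(\lambda-D)$ to the difference $\cd_0\varphi-\cd_\lambda\varphi\in\ker\cp=D(D)$. One small remark on your step two: the detour through the graph norm of $[D(D_m)]$ is not actually needed. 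You can run the closed graph theorem directly for $\cd_\lambda:\partial X\to X$: if $\varphi_n\to\varphi$ and $u_n:=\cd_\lambda\varphi_n\to u$ in $X$, then $D_m u_n=\lambda u_n\to\lambda u$ and $\cp u_n=\varphi_n\to\varphi$, so the closedness of $\left(\begin{smallmatrix}\cp\\ D_m\end{smallmatrix}\right)$ immediately gives $u\in D(D_m)$, $D_m u=\lambda u$, $\cp u=\varphi$, hence $u=\cd_\lambda\varphi$. So the graph of $\cd_\lambda$ into $X$ \emph{is} obviously closed; your more elaborate route is correct but unnecessary.
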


We first discuss \eqref{eq:AIBVP} with $B=0$, and introduce the operator matrix
\[
\ca_0:=\begin{pmatrix}
A & 0 \\
0 & D_m
\end{pmatrix},\qquad D(\ca_0):=\left\{\begin{pmatrix}
x \\ u
\end{pmatrix}\in D(A)\times D(D_m) : \cp u=x\right\},
\]
on the product space $\cx$.

\begin{lemma}\label{lem:8.3}
Let $\lambda\in\rho(D)$. Then the domain $D(\ca_0)$ defined as above coincides with
\[
\ch:=\left\{\begin{pmatrix}
x \\ u
\end{pmatrix}\in D(A)\times X : -\cd_\lambda x+u\in D(D)\right\},
\]
and the identity
\begin{equation}\label{eq:8.2}
\lambda-\ca_0=\begin{pmatrix}
\lambda-A & 0 \\
0 & \lambda-D
\end{pmatrix}
\begin{pmatrix}
I_{\partial X} & 0 \\
-\cd_\lambda & I_X
\end{pmatrix}=:\cq_\lambda\crl
\end{equation}
holds.
\end{lemma}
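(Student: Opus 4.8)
The plan is to prove the two assertions separately, the equality of domains first and then the factorization identity. For the domain equality, I would fix $\lambda\in\rho(D)$ and argue by double inclusion. Take $\begin{pmatrix} x \\ u \end{pmatrix}\in D(\ca_0)$, so $x\in D(A)$, $u\in D(D_m)$, and $\cp u = x$. I want to show $-\cd_\lambda x + u\in D(D)=D(D_m|_{\ker\cp})$. The element $\cd_\lambda x\in\ker(\lambda-D_m)\subseteq D(D_m)$ by Lemma~\ref{lem:8.2}, so $-\cd_\lambda x + u\in D(D_m)$, and applying $\cp$ gives $\cp(-\cd_\lambda x + u) = -\cp\cd_\lambda x + \cp u = -x + x = 0$, using that $\cp\cd_\lambda = I_{\partial X}$ (again Lemma~\ref{lem:8.2}). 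Hence $-\cd_\lambda x + u\in\ker\cp\cap D(D_m)=D(D)$, so $\begin{pmatrix} x \\ u \end{pmatrix}\in\ch$. Conversely, if $\begin{pmatrix} x \\ u \end{pmatrix}\in\ch$, then $x\in D(A)$ and $v:=-\cd_\lambda x + u\in D(D)\subseteq D(D_m)$ with $\cp v = 0$; since $\cd_\lambda x\in D(D_m)$ as well, $u = v + \cd_\lambda x\in D(D_m)$ and $\cp u = \cp v + \cp\cd_\lambda x = 0 + x = x$, so $\begin{pmatrix} x \\ u \end{pmatrix}\in D(\ca_0)$.

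For the factorization \eqref{eq:8.2}, I would simply compute the right-hand side on an arbitrary $\begin{pmatrix} x \\ u \end{pmatrix}\in\ch=D(\ca_0)$. Multiplying out,
\[
\cq_\lambda\crl\begin{pmatrix} x \\ u \end{pmatrix}
=\begin{pmatrix} \lambda-A & 0 \\ 0 & \lambda-D \end{pmatrix}\begin{pmatrix} x \\ -\cd_\lambda x + u \end{pmatrix}
=\begin{pmatrix} (\lambda-A)x \\ (\lambda-D)(-\cd_\lambda x + u) \end{pmatrix}.
\]
In the second component, $(\lambda-D)(-\cd_\lambda x) = -(\lambda-D_m)\cd_\lambda x = 0$ because $\cd_\lambda x\in\ker(\lambda-D_m)$, and $(\lambda-D)u$ agrees with $(\lambda-D_m)u$ since on $D(D)$ the operator $D$ is just the restriction of $D_m$. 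Hence the second component is $(\lambda-D_m)u = (\lambda-\ca_0)\begin{pmatrix} x \\ u \end{pmatrix}$ read off the second row, and the first is $(\lambda-A)x$, matching $\lambda-\ca_0$. One should also note that $\crl$ maps $\ch$ boundedly into $D(A)\times D(D)$ when the former carries the graph norm of $A$ in the first slot, using boundedness of $\cd_\lambda$, so the factorization is an honest identity of operators with matching domains.

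The only subtle point is the bookkeeping of domains in the factorization: one must check that $\crl$ really does land in the domain of $\cq_\lambda$, i.e., that $-\cd_\lambda x + u\in D(\lambda-D)=D(D)$, which is exactly the defining condition of $\ch$ — so this is where the first part of the lemma is genuinely used, and no extra work is needed. I do not expect any real obstacle here; the whole lemma is a direct unwinding of the definitions together with the two elementary properties of $\cd_\lambda$ recorded in Lemma~\ref{lem:8.2} (namely $\cd_\lambda x\in\ker(\lambda-D_m)$ and $\cp\cd_\lambda=I_{\partial X}$). The formula \eqref{eq:8.1} relating $\cd_\lambda$ and $\cd_0$ is not needed for this lemma and would only enter later.
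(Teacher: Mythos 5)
Your proposal is correct and follows essentially the same route as the paper: the domain identification rests on $\cd_\lambda x\in\ker(\lambda-D_m)$ together with $\cp\cd_\lambda=I_{\dx}$, and the factorization is verified by direct computation of $\cq_\lambda\crl$. The only cosmetic point is that in the second component you should first replace $\lambda-D$ by $\lambda-D_m$ (legitimate because the whole argument $-\cd_\lambda x+u$ lies in $D(D)$, where $D$ coincides with $D_m$) and only then split by linearity of $D_m$, since $-\cd_\lambda x$ alone is generally not in $D(D)$; this is precisely how the paper's computation proceeds.
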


\begin{proof}
To show the first claim, let us take a vector $\begin{pmatrix} x \\ u \end{pmatrix} \in\cx$. Then $\begin{pmatrix} x \\ u \end{pmatrix}\in\ch$ if and only if $x\in D(A)$ and $u-\cd_\lambda x\in\ker(\cp)$. But $\cd_\lambda x\in\ker(\lambda-D_m)\subseteq D(D_m)$, and hence $u-\cd_\lambda x\in D(D)=\ker(\cp)$ if and only if $u\in D(D_m)$ and $\cp(u-\cd_\lambda x)=\cp u-x=0$, i.e., $\cp u=x$. This proves that $D(\ca_0)=\ch$. It is clear that $\ch=D(\cq_\lambda\crl)$ since $\begin{pmatrix} x \\ u \end{pmatrix} \in D(\cq_\lambda\crl)$ if and only if
\[
\crl\begin{pmatrix} x \\ u \end{pmatrix}=\begin{pmatrix} x \\ -\cd_\lambda x+u \end{pmatrix}\in D(\cq_\lambda)=D(A)\times D(D).
\]
To show \eqref{eq:8.2} we take $\begin{pmatrix} x \\ u \end{pmatrix}\in D(\cq_\lambda\crl)$ and obtain
\[
\cq_\lambda\crl\begin{pmatrix} x \\ u \end{pmatrix}=\begin{pmatrix}
\lambda-A & 0 \\
0 & \lambda-D_m
\end{pmatrix}\begin{pmatrix}
x \\ -\cd_\lambda x+u
\end{pmatrix}=\begin{pmatrix}
(\lambda-A)x \\ (\lambda-D_m)u
\end{pmatrix}=(\lambda-\ca_0)\begin{pmatrix} x \\ u \end{pmatrix}.
\]
\end{proof}

Note that Lemma~\ref{lem:8.3} simply says that $\lambda-\ca_0$ is an osc operator matrix for $\lambda\in\rho(D)$. In particular, $\ca_0$ is osc whenever $D$ is invertible.

We now study the generator property for $\ca_0$.

\begin{theorem}\label{thm:8.4}
Assume that $D$ is invertible. Then $\ca_0$ generates a strongly continuous semigroup $(\ct_0(t))_{t\ge0}$ on $\cx$ if and only if for all $t\ge0$ the operator
\[
\tilde{Q}(t):D(A^2)\rightarrow X,\qquad \tilde{Q}(t)x:=-D\int_0^t S(t-s)\cd_0 T(s)x\;ds
\]
extends to a bounded linear operator $Q(t):\partial X\to X$, and this extension satisfies
\[
\limsup\limits_{t\rightarrow0^+}\norm{Q(t)}_{\mathcal{L}(\partial X,X)}<\infty.
\]
In this case, $(\ct_0(t))_{t\ge0}$ is given as in \eqref{eq:6.2} by
\[
\ct_0(t)=\begin{pmatrix}
T(t) & 0 \\
Q(t) & S(t)
\end{pmatrix},\qquad t\geq0.
\]
\end{theorem}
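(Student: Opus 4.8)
The strategy is to reduce Theorem~\ref{thm:8.4} directly to Theorem~\ref{thm:6.1}, exploiting the identification of $\lambda - \ca_0$ as an osc operator matrix established in Lemma~\ref{lem:8.3}. Setting $\lambda = 0$ in \eqref{eq:8.2} (legitimate since $D$ is invertible, so $0 \in \rho(D)$), we have $\ca_0 = \ct Q_0 \crl$ with $\crl$ replaced by $\cr_0$, i.e.
\[
\ca_0 = \begin{pmatrix} A & 0 \\ 0 & D \end{pmatrix}\begin{pmatrix} I_{\partial X} & 0 \\ -\cd_0 & I_X \end{pmatrix},
\]
which is exactly the form treated in Theorem~\ref{thm:6.1} with the dictionary $\tilde A \leftrightarrow A$, $D \leftrightarrow D$, $X \leftrightarrow \partial X$, $Y \leftrightarrow X$, $B = 0$, and $L \leftrightarrow -\cd_0$. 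Note that $L = -\cd_0$ is bounded from $[D(A)]$ to $X$ — indeed it is bounded from $\partial X$ to $X$ by Lemma~\ref{lem:8.2} — so Assumptions~\ref{assumptions:3.1} are satisfied, and the operator $D$ appearing here coincides with the one produced by the general construction. Condition (i) of Theorem~\ref{thm:6.1} holds automatically: $A$ generates $(T(t))_{t\ge0}$ by Assumptions~8.1(a), and $D = D_m|_{\ker(\cp)}$ generates $(S(t))_{t\ge0}$ by Assumptions~8.1(b).

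It therefore remains only to match up condition (ii) of Theorem~\ref{thm:6.1} with the hypothesis of Theorem~\ref{thm:8.4}. Under the dictionary above, the operator $\tilde Q(t)$ of \eqref{eq:6.1} becomes
\[
\tilde Q(t)x = D\int_0^t S(t-s)\,L\,T(s)x\;ds = D\int_0^t S(t-s)(-\cd_0)T(s)x\;ds = -D\int_0^t S(t-s)\cd_0 T(s)x\;ds,
\]
with domain $D(\tilde A^2) = D(A^2)$, which is precisely the operator $\tilde Q(t): D(A^2) \to X$ in the statement of Theorem~\ref{thm:8.4}. Consequently, the requirement that $\tilde Q(t)$ extend to a bounded operator $Q(t): \partial X \to X$ with $\limsup_{t\to0^+}\norm{Q(t)}_{\mathcal{L}(\partial X, X)} < \infty$ is verbatim condition (ii) of Theorem~\ref{thm:6.1}. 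Applying that theorem, $\ca_0$ generates a strongly continuous semigroup on $\cx$ if and only if this condition holds, and in that case the semigroup is given by \eqref{eq:6.2}, i.e. $\ct_0(t) = \begin{pmatrix} T(t) & 0 \\ Q(t) & S(t) \end{pmatrix}$, as claimed.

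The only genuine point requiring care — and the step I would expect to be the main obstacle — is the verification that Remark~\ref{rem:6.2}(1) still applies, i.e. that the operators $\tilde Q(t)$ are well defined on $D(A^2)$ in the present setting. One needs that $s \mapsto L T(s)x = -\cd_0 T(s)x$ is differentiable for $x \in D(A^2)$; this is where the boundedness of $\cd_0$ together with the invariance and differentiability properties of $(T(t))_{t\ge0}$ on $D(A^2)$ enter, after which integration by parts gives $\int_0^t S(t-s)\cd_0 T(s)x\,ds \in D(D)$. Since $\cd_0 \in \mathcal{L}(\partial X, X)$, the chain $s \mapsto T(s)x \in D(A) \xrightarrow{} \cd_0 T(s)x$ inherits the required regularity from that of $T(\cdot)x$, so no new difficulty arises beyond what is already handled in Remark~\ref{rem:6.2}. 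Everything else is a direct transcription of Theorem~\ref{thm:6.1} through the similarity/identification furnished by Lemma~\ref{lem:8.3}.
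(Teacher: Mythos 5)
Your proposal is correct and follows exactly the route of the paper, whose entire proof reads: ``By Lemma~\ref{lem:8.3}, the claim is only a consequence of Theorem~\ref{thm:6.1}.'' You have merely spelled out the dictionary ($\tilde A\leftrightarrow A$, $L\leftrightarrow -\cd_0$, $B=0$, $\lambda=0$ admissible since $D$ is invertible) that the authors leave implicit.
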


\begin{proof}
By Lemma~\ref{lem:8.3}, the claim is only a consequence of Theorem~\ref{thm:6.1}.
\end{proof}

\begin{corollary}\label{cor:8.5}
Assume that $A$ and $D$ generate analytic semigroups on $\partial X$ and $X$, respectively. Then $\ca_0$ generates an analytic semigroup on $\cx$.
\end{corollary}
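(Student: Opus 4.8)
The plan is to invoke Theorem~\ref{thm:8.4}, so the whole task reduces to verifying that the family $\tilde Q(t)x = -D\int_0^t S(t-s)\cd_0 T(s)x\,ds$, a priori defined only for $x\in D(A^2)$, extends to a bounded operator $Q(t):\partial X\to X$ with $\limsup_{t\to0^+}\norm{Q(t)}<\infty$. Here I may assume $A$ and $D$ generate analytic semigroups; in particular $D$ is invertible after a shift, but note that Theorem~\ref{thm:8.4} literally requires $D$ invertible, so the first step is a harmless reduction: replace $\ca_0$ by $\ca_0-\mu$ for $\mu$ large (which shifts $D$ to $D-\mu$, invertible, and shifts $A$ to $A-\mu$, still analytic), observe that $\cd_0$ for the shifted problem is $\cd_\mu$ for the original, and that the generator property and analyticity are invariant under such a bounded shift. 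So from now on assume $0\in\rho(D)$ and $0\in\rho(A)$ as well.

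The core estimate exploits analyticity through the parabolic smoothing bounds $\norm{DS(r)}_{\mathcal L(X)}\le C r^{-1}e^{\omega r}$ and $\norm{AT(r)}_{\mathcal L(\partial X)}\le C r^{-1}e^{\omega r}$. First I would rewrite $\tilde Q(t)x$ to move the unbounded operator $D$ inside the integral in a controlled way and to extract a compensating $A^{-1}$ acting on $x$. Concretely, write $\cd_0 T(s)x = \cd_0 A^{-1}\,AT(s)x$ (valid for $x\in D(A^2)$, and $\cd_0$ is bounded on $\partial X$ by Lemma~\ref{lem:8.2}), and split the time integral at $t/2$. On $[0,t/2]$ one has $T(s)x$ bounded and the factor $S(t-s)$ carries an argument bounded below by $t/2$, so $D\int_0^{t/2}S(t-s)\cd_0 T(s)x\,ds$ is estimated by $\int_0^{t/2}\norm{DS(t-s)}\norm{\cd_0}\norm{T(s)x}\,ds \le C t^{-1}\cdot t\cdot M\norm{x}=CM\norm{x}$, uniformly for small $t$. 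On $[t/2,t]$ we instead keep $D$ with its small argument but differentiate the other way: here $S(t-s)$ has small argument while $T(s)x = AT(s)A^{-1}x$ has the $AT(s)$ smoothing at argument $s\ge t/2$, and since $\int_{t/2}^t (t-s)^{-1}\,ds$ diverges logarithmically one needs the standard trick of inserting $S(t-s)\cd_0 = S(t-s)\cd_0 - \cd_0 + \cd_0$ and using $S(r)\cd_0 - \cd_0 = \int_0^r DS(\sigma)\cd_0\,d\sigma$ together with $D\cd_0=0$ on $\ker(\cp)$ — but $\cd_0$ maps into $\ker D_m$–shifted, so in fact $D\cd_0$ need not vanish; the cleaner route is to integrate by parts in $s$ as in the proof of Theorem~\ref{thm:6.1}, eq.~\eqref{eq:6.3}, producing boundary terms $S(t)\cd_0 x - \cd_0 T(t)x$ (both bounded) plus an integral $\int_0^t S(t-s)\cd_0 T(s)Ax\,ds$ to which one again applies the $[0,t/2]\cup[t/2,t]$ splitting, now with one order of $A$-regularity consumed, so that the surviving integrand is $\norm{DS(t-s)}\cdot\norm{T(s)A^{-1}Ax}$–type and integrable uniformly in $t$.

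The main obstacle is exactly this time-singularity bookkeeping: naively bounding $\tilde Q(t)$ gives $\int_0^t (t-s)^{-1}\,ds=\infty$, so the estimate only closes by trading one power of the $A$-smoothing of $T(s)$ against the $D$-smoothing of $S(t-s)$ — i.e. by writing the integrand as $\bigl(DS(t-s)\bigr)\cd_0\bigl(A^{-1}\bigr)\bigl(AT(s)\bigr)x$ and checking $\int_0^t (t-s)^{-1}s^{-\alpha}\,ds$ with a genuine $\alpha<1$ coming from an interpolation/real-part argument, or by the integration-by-parts reduction sketched above which lowers the singularity to an integrable one. Once $\limsup_{t\to0^+}\norm{Q(t)}<\infty$ is established, Theorem~\ref{thm:8.4} yields that $\ca_0$ generates a strongly continuous semigroup $(\ct_0(t))_{t\ge0}$ with the triangular form in \eqref{eq:6.2}. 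Finally, analyticity of the resulting semigroup follows from the resolvent representation: by Theorem~\ref{thm:5.1} (applied through Lemma~\ref{lem:8.3}, with $B=0$) we have $R(\lambda,\ca_0)=\left(\begin{smallmatrix} R(\lambda,A) & 0\\ -DR(\lambda,D)(-\cd_\lambda)R(\lambda,A) & R(\lambda,D)\end{smallmatrix}\right)$ on a sector, and the analyticity sector bounds $\norm{\lambda R(\lambda,A)}\le M$, $\norm{\lambda R(\lambda,D)}\le M$, combined with $\norm{DR(\lambda,D)}=\norm{I-\lambda R(\lambda,D)}\le 1+M$ and boundedness of $\cd_\lambda$ uniformly on the sector (via \eqref{eq:8.1}), give $\norm{\lambda R(\lambda,\ca_0)}\le M'$ on a sector $\Sigma_{\pi/2+\delta}$, which by the standard sectoriality characterization of analytic semigroups (e.g. \cite[Thm.~II.4.6]{EN00}) shows $(\ct_0(t))_{t\ge0}$ is analytic.
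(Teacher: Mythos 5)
The paper does not prove this corollary itself; it cites \cite[Cor.~2.8]{CENN02} and notes that the proof there rests on similarity and perturbation arguments, so any self-contained argument you give is necessarily a different route. Your last paragraph is in fact such a route, and it is essentially complete on its own: by Proposition~\ref{prop:8.8} (or Theorem~\ref{thm:5.1} with $B=0$) a sector $\Sigma=\{\lambda:|\arg(\lambda-\omega)|<\tfrac{\pi}{2}+\delta\}$ common to $\rho(A)$ and $\rho(D)$ lies in $\rho(\ca_0)$, the lower-left entry of $R(\lambda,\ca_0)$ is $\cd_\lambda R(\lambda,A)$ with $\|\cd_\lambda\|\le(1+\sup_\Sigma\|\lambda R(\lambda,D)\|)\|\cd_0\|$ by \eqref{eq:8.1}, and hence $\|(\lambda-\omega)R(\lambda,\ca_0)\|\le M'$ on $\Sigma$; since $D(\ca_0)$ is dense (approximate $(x,u)$ by $(x_n,\cd_0x_n+v_n)$ with $x_n\in D(A)$, $v_n\in D(D)$), the sectoriality characterization of analytic generators applies. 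Note that this argument already yields generation of an analytic, hence strongly continuous, semigroup and renders your appeal to Theorem~\ref{thm:8.4} superfluous.

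That is fortunate, because the first two paragraphs --- the direct verification of $\limsup_{t\to0^+}\|Q(t)\|<\infty$ --- do not close as written. The obstruction is exactly the one you name but do not resolve: $\int_0^t(t-s)^{-1}s^{-\alpha}\,ds$ diverges for \emph{every} $\alpha\ge0$ because of the endpoint $s=t$, so extracting $A$-smoothing from $T(s)$ (which only improves the behaviour at $s=0$) cannot compensate the non-integrable singularity of $\|DS(t-s)\|$ at $s=t$; the integration by parts merely transports the order-one singularity from $s=t$ (via $DS(t-s)$) to $s=0$ (via $AT(s)$), leaving $\int_0^t\|S(t-s)\cd_0 AT(s)A^{-1}Ax\|\,ds$ again logarithmically divergent in operator norm. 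The genuine trade you allude to would require factoring $D=D^{1-\theta}D^{\theta}$ and $A=A^{\theta}A^{1-\theta}$ and commuting the fractional powers across $\cd_0$, but $\cd_0$ does not intertwine $D$ with $A$, so no such splitting is available; indeed the paper's own proof of Theorem~\ref{thm:6.1} obtains the bound on $\|Q(t)\|$ \emph{from} the boundedness of the semigroup near $t=0$, not the other way around, and estimate \eqref{eq:6.3} controls $Q(t)x$ only by the $D(\tilde A^2)$-graph norm of $x$. I recommend you delete the $Q(t)$ argument and present the resolvent estimate as the proof.
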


The proof is based on similarity and perturbation arguments, and can be found in \cite[Cor.~2.8]{CENN02}.

We now turn to the original version of \eqref{eq:AIBVP} and look at the operator matrix
\[
\ca:=\ca_0+\cb,
\]
where
\[
\cb:=\begin{pmatrix}
0 & B \\
0 & 0
\end{pmatrix}, \qquad D(\cb):=\dx\times D(B),
\]
with $D(\ca)=D(\ca_0)$ since $D(B)\supseteq D(D_m)$. Again, we can show that the matrix $\lambda-\ca$ is osc.

\begin{lemma}\label{lem:8.6}
Let $\lambda\in\rho(D)$. Then $\lambda-\ca$ is an osc operator matrix. More precisely, $\lambda-\ca$ can be factorized as
\begin{equation}\label{eq:8.3}
\lambda-\ca = \begin{pmatrix}
\lambda-A-B\cd_\lambda & -B \\
0 & \lambda-D
\end{pmatrix}
\begin{pmatrix}
I_\dx & 0 \\
-\cd_\lambda & I_X
\end{pmatrix}.
\end{equation}
\end{lemma}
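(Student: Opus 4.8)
The claim is essentially the same kind of factorization we established for $\ca_0$ in Lemma~\ref{lem:8.3}, but now with the feedback term $\cb$ present. So the natural strategy is: first deduce the identity \eqref{eq:8.3} by an algebraic manipulation analogous to the proof of Lemma~\ref{lem:8.3}, and then observe that the right-hand side of \eqref{eq:8.3} is literally of the form appearing in Definition~\ref{def:osc-matrix}, provided we check that the four operators involved satisfy Assumptions~\ref{assumptions:3.1} (with $X$ replaced by $\dx$ and $Y$ replaced by $X$). The Dirichlet operator $\cd_\lambda$ plays the role of $-L$, the operator $-B$ plays the role of $B$, the operator $\lambda-D$ plays the role of $D$, and $\lambda-A-B\cd_\lambda$ plays the role of $\tilde A$.

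\emph{First step: the factorization.} I would take $\begin{pmatrix} x \\ u \end{pmatrix}\in D(\ca)=D(\ca_0)=\ch$ (using Lemma~\ref{lem:8.3} to identify the domain), so that $x\in D(A)$ and $-\cd_\lambda x + u\in D(D)$. Applying the right factor of \eqref{eq:8.3} gives $\begin{pmatrix} x \\ -\cd_\lambda x + u\end{pmatrix}$, which indeed lies in $D(\dx)\times D(D)$ — wait, I mean $\dx\times D(\lambda-D)$ — so the composition is well defined on exactly $\ch$. Then applying the left factor and expanding, the $-B\cd_\lambda x$ term cancels against the contribution $-B(-\cd_\lambda x)$ coming from the $-B$ entry acting on $-\cd_\lambda x+u$, leaving $(\lambda-A)x - Bu$ in the first component and $(\lambda-D_m)u$ in the second — here I use that $\cd_\lambda x\in\ker(\lambda-D_m)$, so $(\lambda-D)(-\cd_\lambda x + u)=(\lambda-D_m)(-\cd_\lambda x+u)=(\lambda-D_m)u$. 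This is precisely $(\lambda-\ca_0-\cb)\begin{pmatrix} x\\u\end{pmatrix}=(\lambda-\ca)\begin{pmatrix} x\\u\end{pmatrix}$, using $\cb\begin{pmatrix} x\\u\end{pmatrix}=\begin{pmatrix} Bu\\0\end{pmatrix}$. The domain computation is the same as in Lemma~\ref{lem:8.3}: the domain of the product on the right is $\{\begin{pmatrix} x\\u\end{pmatrix}: x\in D(A),\ -\cd_\lambda x+u\in D(D)\}=\ch=D(\ca)$.

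\emph{Second step: osc verification.} To invoke Definition~\ref{def:osc-matrix} I must check that $\cd_\lambda\in\cl(\dx,X)$ — in fact $\cd_\lambda\in\cl(\dx,[D(A)])$ is what we need for the role of $L$, but since the "$L$" in the definition is a bounded operator from $[D(\tilde A)]$ to the second space, and here $[D(\tilde A)]=[D(A)]$ (the operator $\lambda-A-B\cd_\lambda$ has the same domain as $A$), I need $-\cd_\lambda:[D(A)]\to X$ bounded, which is immediate since $\cd_\lambda:\dx\to X$ is already bounded by Lemma~\ref{lem:8.2} and restricting to the stronger norm only helps. Boundedness of $-B:[D(\lambda-D)]\to\dx$ follows from $D(D)=D(D_m)|_{\ker\cp}\subseteq D(B)$ together with the closedness hypothesis (d) in Assumptions~8.1, via the closed graph theorem. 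Finally $\lambda-A-B\cd_\lambda$ is closed (it is a bounded perturbation, on $[D(A)]$, of the closed operator $\lambda-A$), and $\lambda-D$ is closed since $D$ is a generator. So all of Assumptions~\ref{assumptions:3.1} hold and \eqref{eq:8.3} exhibits $\lambda-\ca$ as an osc operator matrix.

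\emph{Main obstacle.} There is no deep obstruction here; the content is bookkeeping. The one point that deserves care is verifying the cancellation of the $B\cd_\lambda x$ terms and correctly tracking that $\cd_\lambda x\in\ker(\lambda-D_m)$ (not merely $\ker(\lambda-D)$, which would be false since $\cd_\lambda x\notin D(D)=\ker\cp$ unless $x=0$) — this is what makes $(\lambda-D)$ applied to $-\cd_\lambda x + u$ collapse to $(\lambda-D_m)u$. The other mild subtlety is making sure the boundedness of $B$ on the graph norm of $\lambda-D$ really is available: it rests on the inclusion $D(D_m)\subseteq D(B)$ and on closedness, and one should cite Assumptions~8.1(d) for this.
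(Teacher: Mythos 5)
Your factorization argument is correct and is essentially the computation the paper performs, only carried out element-by-element: the paper writes $\lambda-\ca=\cq_\lambda\crl-\cb=(\cq_\lambda-\cb\crl^{-1})\crl$ and reads the left factor off from $\cb\crl^{-1}=\left(\begin{smallmatrix}B\cd_\lambda & B\\ 0&0\end{smallmatrix}\right)$, which encodes exactly the cancellation of the $B\cd_\lambda x$ terms that you track by hand, and the domain identification via $D(\ca)=D(\ca_0)=\ch$ is the same. One caveat concerns your second step, which goes beyond what the paper's own proof checks: the boundedness of $-B:[D(\lambda-D)]\to\dx$ does not follow from Assumption 8.1(d) together with $D(D_m)\subseteq D(B)$ ``via the closed graph theorem'' --- that argument would require $B$ to have closed graph as an operator from $[D(D)]$ to $\dx$, which is not among the stated hypotheses; it is precisely what the relative $\left(\begin{smallmatrix}\cp\\ D_m\end{smallmatrix}\right)$-boundedness assumed later in Proposition~\ref{prop:8.7} (or Lemma~\ref{lem:9.2} in the application) supplies. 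The same remark applies to the boundedness of $B\cd_\lambda$ that you invoke to conclude closedness of $\lambda-A-B\cd_\lambda$. Since the paper itself is silent on these verifications, your proof is no less complete than the published one, but the closed-graph-theorem citation as written is not a valid deduction from Assumptions~8.1 alone.
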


\begin{proof}
Taking into account the identity \eqref{eq:8.2}, a direct computation yields
\[
\begin{aligned}
\lambda-\ca &= \cq_\lambda\crl-\cb = (\cq_\lambda- \cb\crl^{-1})\crl \\
&= \bigg[\begin{pmatrix} \lambda-A & 0 \\ 0 & \lambda-D \end{pmatrix}-\begin{pmatrix} B\cd_\lambda & B \\ 0 & 0 \end{pmatrix}\bigg]\crl \\
&= \begin{pmatrix} \lambda-A-B\cd_\lambda & -B \\ 0 & \lambda-D \end{pmatrix}\begin{pmatrix} I_{\partial X} & 0 \\ -\cd_\lambda & I_X \end{pmatrix}
\end{aligned}
\]
with correct domains since $D(\ca)=D(\ca_0)$.
\end{proof}

If $B$ is a bounded operator, the generator property of $\ca$ is equivalent to the generator property of $\ca_0$ which has been characterized in Theorem~\ref{thm:8.4}. In the case of unbounded feedback $B$, Proposition~\ref{prop:6.3} can be used.

\begin{proposition}\label{prop:8.7}
Let $A$ be bounded, and $B$ be relatively $\begin{pmatrix} \cp \\ D_m \end{pmatrix}$-bounded. Then the operator $\ca$ generates a strongly continuous (resp., analytic) semigroup on $\cx$ if and only if for some $\lambda_0\in\rho(D)$ the operator $D-\cd_{\lambda_0} B$, defined on $D(D)$, generates a strongly continuous (resp., analytic) semigroup on $X$.
\end{proposition}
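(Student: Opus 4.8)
The strategy is to mirror the proof of Proposition~\ref{prop:6.3}, using a similarity transformation to push $\ca$ into a triangular form and then invoke Theorem~\ref{thm:8.4} together with a perturbation argument. Since $A$ is bounded, $\ca_0$ is a bounded perturbation of the diagonal operator $\begin{pmatrix} 0 & 0 \\ 0 & D_m\end{pmatrix}$ restricted to $D(\ca_0)$, and the key point is that the Dirichlet-type shear $\crl=\begin{pmatrix} I_{\partial X} & 0 \\ -\cd_\lambda & I_X\end{pmatrix}$ is bounded and invertible on $\cx$ (its inverse being $\begin{pmatrix} I_{\partial X} & 0 \\ \cd_\lambda & I_X\end{pmatrix}$), so conjugation by it preserves the generator property. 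The relative $\begin{pmatrix}\cp\\ D_m\end{pmatrix}$-boundedness of $B$ guarantees, via Lemma~\ref{lem:8.2}, that $B\cd_{\lambda_0}:\partial X\to\partial X$ and $\cd_{\lambda_0}B$ (the latter a priori only on $D(D_m)$, but we need it on $D(D)$) are well-defined bounded operators, which is exactly what is needed to treat them as bounded perturbations.

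First I would fix $\lambda_0\in\rho(D)$ and use the factorization \eqref{eq:8.3} from Lemma~\ref{lem:8.6} to write $\lambda_0-\ca=\cq_{\lambda_0}\crl$ with
\[
\cq_{\lambda_0}=\begin{pmatrix}\lambda_0-A-B\cd_{\lambda_0} & -B \\ 0 & \lambda_0-D\end{pmatrix}.
\]
Thus $\ca$ is similar, via $\crl$, to the operator $\crl\,\ca\,\crl^{-1}$, which equals $\lambda_0-\crl\cq_{\lambda_0}$. Expanding $\crl\cq_{\lambda_0}$ as in the proof of Proposition~\ref{prop:6.3} shows that, up to the bounded summands $\begin{pmatrix} A+B\cd_{\lambda_0} & B \\ 0 & 0\end{pmatrix}$ and $\begin{pmatrix} 0 & 0 \\ \cd_{\lambda_0}(A+B\cd_{\lambda_0}) & 0\end{pmatrix}$ (all entries bounded because $A$ is bounded, $B\cd_{\lambda_0}\in\mathcal{L}(\partial X)$, and $\cd_{\lambda_0}\in\mathcal{L}(\partial X,X)$), the operator $\crl\,\ca\,\crl^{-1}$ agrees with the lower-triangular matrix $\begin{pmatrix} 0 & 0 \\ \cd_{\lambda_0}B & D-\cd_{\lambda_0}B\end{pmatrix}$ on the domain $\partial X\times D(D)$ — one has to check that the off-diagonal entry in the $(2,1)$-slot and the $(2,2)$-entry $D-\cd_{\lambda_0}B$ are correctly produced by the matrix multiplication, which is a routine bookkeeping computation. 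Hence, by the bounded perturbation theorem and the similarity, $\ca$ generates a strongly continuous (resp.\ analytic) semigroup on $\cx$ if and only if $\begin{pmatrix} 0 & 0 \\ \cd_{\lambda_0}B & D-\cd_{\lambda_0}B\end{pmatrix}$ does.

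It then remains to identify when this last triangular matrix is a generator. Applying Theorem~\ref{thm:6.1} (or rather Theorem~\ref{thm:8.4}) with the roles of the two factor spaces kept as they are — here the upper-left entry is the trivial bounded operator $0$ on $\partial X$, which generates the constant semigroup $(I_{\partial X})_{t\ge0}$ — the generation condition collapses: the operator $\tilde Q(t)$ built from the trivial semigroup on $\partial X$ is identically $0$, so condition (ii) is automatically satisfied, and the matrix is a generator precisely when its lower-right entry $D-\cd_{\lambda_0}B$ generates a strongly continuous (resp.\ analytic) semigroup on $X$. Combining this with the similarity-plus-bounded-perturbation reduction gives the claimed equivalence, and the independence of the statement on the particular choice of $\lambda_0\in\rho(D)$ follows from the relation \eqref{eq:8.1} between $\cd_\lambda$ and $\cd_0$, which shows that changing $\lambda_0$ alters $\cd_{\lambda_0}B$ by a bounded operator.

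**The main obstacle.** The delicate point is not the similarity argument but the verification that $\cd_{\lambda_0}B$ extends to — or already is — a bounded operator on the right space, and in particular that $D-\cd_{\lambda_0}B$ is well-defined on $D(D)$ rather than on some smaller domain. This is where the hypothesis that $B$ is relatively $\begin{pmatrix}\cp\\ D_m\end{pmatrix}$-bounded is essential: it ensures $B$ maps $[D(D_m)]$ (with the graph norm of $\begin{pmatrix}\cp\\ D_m\end{pmatrix}$) boundedly into $\partial X$, which combined with boundedness of $\cd_{\lambda_0}:\partial X\to\ker(\lambda_0-D_m)\subseteq[D(D_m)]$ and the closedness hypothesis (d) of Assumptions~8.1 yields the required boundedness after the shear; this is the step where one must be careful about which graph norms are in play and why the composition closes up. Once that bounded-operator bookkeeping is settled, everything else is a direct application of Theorem~\ref{thm:8.4} and standard perturbation/similarity theory.
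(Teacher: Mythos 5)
Your overall strategy---factorize $\lambda_0-\ca$ via Lemma~\ref{lem:8.6}, conjugate by the shear $\mathcal{R}_{\lambda_0}$, strip off bounded perturbations, and reduce to a triangular matrix---is the same as the paper's, which packages the last two steps as an application of Proposition~\ref{prop:6.3} to the osc matrix $\tilde\ca_{\lambda_0}\mathcal{R}_{\lambda_0}$ after checking that its upper-left entry $A+B\cd_{\lambda_0}$ is bounded. You also correctly isolate the one genuinely technical point, namely that $B\cd_{\lambda_0}\in\mathcal{L}(\partial X)$ follows from the closedness hypothesis in Assumptions~\ref{assumptions:8.1} together with the relative boundedness of $B$ via a closed graph argument.

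There is, however, a genuine gap in your perturbation bookkeeping. You declare $\left(\begin{smallmatrix} A+B\cd_{\lambda_0} & B\\ 0&0\end{smallmatrix}\right)$ to be a ``bounded summand,'' but its $(1,2)$-entry is the feedback operator $B$ itself, which is \emph{not} bounded from $X$ to $\partial X$---it is only relatively $\left(\begin{smallmatrix}\cp\\ D_m\end{smallmatrix}\right)$-bounded (in Section~\ref{sec:diffusion-transport} it is $u\mapsto(u'(0),-u'(1))$ on $L^2(0,1)$); handling this unbounded $B$ is the whole point of the proposition. Computing $\mathcal{R}_{\lambda_0}\ca\mathcal{R}_{\lambda_0}^{-1}=\lambda_0-\mathcal{R}_{\lambda_0}\cq_{\lambda_0}$ explicitly gives
\[
\begin{pmatrix} A+B\cd_{\lambda_0} & B\\ \cd_{\lambda_0}\bigl(\lambda_0-A-B\cd_{\lambda_0}\bigr) & D-\cd_{\lambda_0}B\end{pmatrix}
\quad\text{on }\partial X\times D(D),
\]
so after removing the genuinely bounded entries the residual is the \emph{upper}-triangular matrix $\left(\begin{smallmatrix} 0 & B\\ 0& D-\cd_{\lambda_0}B\end{smallmatrix}\right)$, not your lower-triangular one; note also that your $(2,1)$-entry $\cd_{\lambda_0}B$ does not even map $\partial X$ into $X$. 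For the same reason, the assertion in your final paragraph that $\cd_{\lambda_0}B$ is a bounded operator is false: it is bounded only from $[D(D)]$ to $X$, which is precisely why $D-\cd_{\lambda_0}B$ is an unbounded perturbation of $D$ whose generation property is the content of the proposition rather than an automatic consequence. The step you skip---that $\left(\begin{smallmatrix} 0 & B\\ 0& D-\cd_{\lambda_0}B\end{smallmatrix}\right)$ with domain $\partial X\times D(D)$ generates if and only if $D-\cd_{\lambda_0}B$ does---is exactly what the space-swapped version of Theorem~\ref{thm:6.1} invoked in Proposition~\ref{prop:6.3} supplies (the analogue of condition (ii) is automatic there because the remaining diagonal entry on $\partial X$ is bounded); your appeal to Theorem~\ref{thm:8.4} with $\tilde Q(t)\equiv0$ is made for the wrong matrix. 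The gap is fillable by quoting Proposition~\ref{prop:6.3} as the paper does, but as written the argument does not close.
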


\begin{proof}
Observe first that the relative $\begin{pmatrix} \cp \\ D_m \end{pmatrix}$-boundedness of $B$ implies that $B\in\mathcal{L}([D(D)],\dx)$. By \eqref{eq:8.3} we obtain
\[
\begin{aligned}
\ca &= \left[\begin{pmatrix} A+B\cd_{\lambda_0} & B \\ 0 & D \end{pmatrix}\begin{pmatrix} I_\dx & 0 \\ -D_{\lambda_0} & I_X \end{pmatrix}-\lambda_0\begin{pmatrix} I_\dx & 0 \\ -\cd_{\lambda_0} & I_X \end{pmatrix}\right] + \lambda_0\begin{pmatrix} I_\dx & 0 \\ 0 & I_X \end{pmatrix} \\
&= \begin{pmatrix} A+B\cd_{\lambda_0} & B \\ 0 & D \end{pmatrix}\begin{pmatrix} I_{\partial X} & 0 \\ -\cd_{\lambda_0} & I_X \end{pmatrix} + \lambda_0\begin{pmatrix} 0 & 0 \\ \cd_{\lambda_0} & 0 \end{pmatrix}=: \tilde{\ca}_{\lambda_0}{\mathcal R}_{\lambda_0}+\lambda_0\cp_{\lambda_0}.
\end{aligned}
\]
By Lemma~\ref{lem:8.2}, the operator $\lambda_0\cp_{\lambda_0}$ is a bounded perturbation of the osc matrix $\tilde{\ca}_{\lambda_0}{\mathcal R}_{\lambda_0}$. If we can show that $B\cd_{\lambda_0}$ is bounded, then the claim follows by Proposition~\ref{prop:6.3}. The closedness of $\begin{pmatrix} \cp \\ D_m \end{pmatrix}$ implies the closedness of the operator matrix
\[
\cl:=\begin{pmatrix} 0 & \cp \\ 0 & D_m \end{pmatrix},\qquad D(\mathcal{L}):=\dx\times D(D_m),
\]
on $\cx$, and hence $[D({\mathcal L})]$ is a Banach space continuously embedded in $\cx$. Observe now that, by Lemma~\ref{lem:8.2}, the operator
\[
{\mathcal H}_{\lambda_0}:=\begin{pmatrix} 0 & 0 \\ \cd_{\lambda_0} & 0 \end{pmatrix}: \cx\rightarrow\cx
\]
is bounded from $\cx$ to $\cx$ and its range is contained in $[D({\mathcal L})]$. It then follows by \cite[Cor.~B.7]{EN00} that ${\mathcal H}_{\lambda_0}$ is also bounded from $\cx$ to $[D({\mathcal L})]$, and since under the above assumptions $\cb$ is bounded from $[D({\mathcal L})]$ to $\cx$,
\[
\cb{\mathcal H}_{\lambda_0}=\begin{pmatrix} 0 & 0 \\ 0 & B{\mathcal D}_{\lambda_0} \end{pmatrix}: \cx \rightarrow \cx
\]
is bounded, and therefore $A+B\cd_{\lambda_0}$ is bounded.
\end{proof}

Again by Lemma~\ref{lem:8.6} we obtain spectral properties of $\ca$ analogous to those stated in Theorem~\ref{thm:5.1}.

\begin{proposition}\label{prop:8.8}
Under the Assumptions~\ref{assumptions:8.1} for every $\lambda\in\rho(D)$ one has the equivalence
\[
\lambda\in\rho(\ca) \iff \lambda\in\rho(A+B\cd_\lambda).
\]
\end{proposition}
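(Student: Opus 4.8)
The plan is to read the equivalence off the factorization provided by Lemma~\ref{lem:8.6}. Fix $\lambda\in\rho(D)$ and, as in \eqref{eq:8.3}, write $\lambda-\ca=\mathcal{M}_\lambda\,\crl$ with
\[
\mathcal{M}_\lambda:=\begin{pmatrix}\lambda-A-B\cd_\lambda & -B\\ 0 & \lambda-D\end{pmatrix}\quad\text{on}\quad D(A)\times D(D),\qquad \crl=\begin{pmatrix}I_{\dx} & 0\\ -\cd_\lambda & I_X\end{pmatrix}.
\]
By Lemma~\ref{lem:8.2} the Dirichlet operator $\cd_\lambda$ is bounded, so $\crl$ is a bounded isomorphism of $\cx$ whose inverse is obtained by changing the sign of $\cd_\lambda$; moreover, by the computation in the proof of Lemma~\ref{lem:8.3}, $\crl$ maps $D(\ca)=D(\ca_0)$ bijectively onto $D(A)\times D(D)$. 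Hence $\lambda\in\rho(\ca)$ if and only if $\mathcal{M}_\lambda$ is invertible with bounded inverse as an operator from $D(A)\times D(D)$ to $\cx$, in which case $R(\lambda,\ca)=\crl^{-1}\mathcal{M}_\lambda^{-1}$.

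It then remains to analyse the upper–triangular operator $\mathcal{M}_\lambda$, for which I would reuse the device from the proof of Theorem~\ref{thm:5.1}. Since $\lambda\in\rho(D)$, the lower–right entry $\lambda-D$ is already boundedly invertible with inverse $R(\lambda,D)$, and on $D(A)\times D(D)$ one has the factorization
\[
\mathcal{M}_\lambda=\begin{pmatrix}I_{\dx} & -BR(\lambda,D)\\ 0 & I_X\end{pmatrix}\begin{pmatrix}\lambda-(A+B\cd_\lambda) & 0\\ 0 & \lambda-D\end{pmatrix},
\]
whose first factor is a bounded isomorphism of $\cx$. Consequently $\mathcal{M}_\lambda$ is invertible with bounded inverse precisely when the diagonal middle factor is, and, $\lambda$ being already in $\rho(D)$, this happens exactly when $\lambda-(A+B\cd_\lambda)\colon D(A)\to\dx$ is invertible with bounded inverse, i.e. when $\lambda\in\rho(A+B\cd_\lambda)$. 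Combining this with the reduction of the first paragraph yields the proposition; multiplying out $\crl^{-1}\mathcal{M}_\lambda^{-1}$ one also obtains, as a by-product, an explicit formula for $R(\lambda,\ca)$ in terms of $R(\lambda,A+B\cd_\lambda)$, $R(\lambda,D)$, $B$ and $\cd_\lambda$, in the spirit of \eqref{eq:5.1}.

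The step requiring genuine care — and the one I expect to be the main obstacle — is to justify that the ``glue'' operators appearing in the two unipotent factors, namely $B\cd_\lambda\in\mathcal{L}(\dx)$ and $BR(\lambda,D)\in\mathcal{L}(X,\dx)$, are indeed bounded, so that the matrices above are bona fide bounded, boundedly invertible operators on $\cx$. This rests on the feedback operator $B$ being bounded from the graph space $[D(\begin{pmatrix}\cp\\ D_m\end{pmatrix})]$ into $\dx$ — the same admissibility that already underlies the assertion in Lemma~\ref{lem:8.6} that $\lambda-\ca$ is an osc operator matrix in the sense of Definition~\ref{def:osc-matrix}. Granting it, one checks as in the proof of Proposition~\ref{prop:8.7} (see also \cite[\S~2]{CENN02}) that, by Assumption~\ref{assumptions:8.1}(d), $\cd_\lambda$ maps $\dx$ boundedly into $[D(\begin{pmatrix}\cp\\ D_m\end{pmatrix})]$ and $R(\lambda,D)$ maps $X$ boundedly into $[D(D)]\hookrightarrow[D(\begin{pmatrix}\cp\\ D_m\end{pmatrix})]$; composing with $B$ gives the boundedness of $B\cd_\lambda$ and $BR(\lambda,D)$, and the bookkeeping of the first two paragraphs then goes through verbatim.
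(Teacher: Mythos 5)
Your argument is correct and is essentially the paper's: the paper simply observes that $\lambda-\ca$ is osc by Lemma~\ref{lem:8.6} and invokes Theorem~\ref{thm:5.1} at the spectral value $0$ (where $A_0=\lambda-A-B\cd_\lambda$), whereas you have inlined the proof of that theorem, reproducing its unipotent--diagonal--unipotent factorization with $\crl$ and $\bigl(\begin{smallmatrix}I&-BR(\lambda,D)\\0&I\end{smallmatrix}\bigr)$ as the outer factors. Your extra care about the boundedness of $B\cd_\lambda$ and $BR(\lambda,D)$ is consistent with what the paper handles in Proposition~\ref{prop:8.7} and is implicitly subsumed in the assertion of Lemma~\ref{lem:8.6} that $\lambda-\ca$ is osc.
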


\begin{proof}
Of course, $\lambda\in\rho(\ca)$ if and only if $0\in\rho(\lambda-\ca)$. Since, by Lemma~\ref{lem:8.6}, $\lambda-\ca$ is osc, we now apply Theorem~\ref{thm:5.1}, which reads by \eqref{eq:8.3} as
\[
0\in\rho(\lambda-\ca)\iff 0\in\rho(\lambda-A-B\cd_\lambda).
\]
\end{proof}

\begin{remark}
\label{rem:8.9}
In the context of abstract initial-boundary value problems Remark~\ref{rem:5.2} reads by \eqref{eq:8.3} as follows: If the boundary space $\dx$ is finite dimensional, then $\lambda\in\rho(D)$ lies in the spectrum of $\ca$ if and only if it is an eigenvalue of $A+B\cd_\lambda$.
\end{remark}

We now look at the positivity of the semigroup $\ttt$ generated by $\ca$ and reformulate Theorem~\ref{thm:7.1} and Corollary~\ref{cor:7.3}.

\begin{proposition}\label{prop:8.10}
Let $\partial X$ and $X$ be Banach lattices and let $\ca$ generate a strongly continuous semigroup $\ttt$ on $\cx = \dx\times X$. Assume further that $A$ and $D$ generate \textit{positive} strongly continuous semigroups on $\partial X$ and $X$, respectively. Then the following hold.
\begin{enumerate}[label=(\roman*)]
\item If the feedback operator $B$ is bounded and positive and the Dirichlet operators $\cd_\lambda$ are positive for $\lambda$ sufficiently large, then $\ttt$ is a positive semigroup.
\item Let $\ttt$ be positive. Then it is exponentially stable if (and only if) the spectral bounds $s(A+B\cd_0)$ and $s(D)$ are both strictly negative.
\end{enumerate}
\end{proposition}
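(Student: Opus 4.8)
The plan is to derive both assertions from the abstract positivity results of Section~\ref{sec:positivity}, namely Theorem~\ref{thm:7.1} and Corollary~\ref{cor:7.3}, applied to the osc operator matrix $\ca$, reading off the relevant matrix entries from the factorization in Lemma~\ref{lem:8.6}. The single preliminary issue is that Theorem~\ref{thm:7.1} and Corollary~\ref{cor:7.3} are phrased for an osc matrix $\ca$, whereas Lemma~\ref{lem:8.6} only gives that $\lambda-\ca$ is osc for $\lambda\in\rho(D)$. When $D$ is invertible there is nothing to do: taking $\lambda=0$ in \eqref{eq:8.3} and changing signs exhibits
\[
\ca=\begin{pmatrix} A+B\cd_0 & B \\ 0 & D\end{pmatrix}\begin{pmatrix} I_{\dx} & 0 \\ -\cd_0 & I_X\end{pmatrix}
\]
as an osc matrix with $\tilde A=A+B\cd_0$, bounded coupling $L=-\cd_0\in\mathcal{L}(\dx,X)$, and the given $B$ and $D$; moreover, by \eqref{eq:8.1}, the operator $A_\lambda=\tilde A+\lambda BR(\lambda,D)L$ of Theorem~\ref{thm:7.1} equals $A+B\cd_\lambda$, consistently with Proposition~\ref{prop:8.8}. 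If $D$ is not invertible one first replaces $\ca$ by $\ca-\mu$ for some real $\mu>s(D)$, so that $\mu\in\rho(D)$: the same computation shows $\ca-\mu$ is osc, and since the rescaling $\ct(t)\mapsto e^{-\mu t}\ct(t)$ preserves positivity and, for $\mu$ large, exponential stability, one may argue with $\ca-\mu$. In the sequel I assume $D$ invertible.

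For part~(i) I would apply the implication (b)$\Rightarrow$(a) of Theorem~\ref{thm:7.1}, using the reformulation (iii$'$) of (b.iii) from Remark~\ref{rem:7.2}(2). Condition (b.ii), $R(\lambda,D)\ge0$ for $\lambda$ large, holds since $D$ generates a positive semigroup. For (iii$'$): $By\ge0$ for $y\in D(D)_+$ because $B$ is bounded and positive; and $L_\lambda=-DR(\lambda,D)L=DR(\lambda,D)\cd_0=(\lambda R(\lambda,D)-I)\cd_0=-\cd_\lambda$ by \eqref{eq:8.1}, so $-L_\lambda=\cd_\lambda\ge0$ for $\lambda$ large, which is precisely the hypothesis. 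It remains to verify (b.i), i.e. $R(\lambda,A_\lambda)=R(\lambda,A+B\cd_\lambda)\ge0$ for $\lambda$ large. Here I would note that $\norm{\cd_\lambda}=\norm{(I-\lambda R(\lambda,D))\cd_0}$ stays bounded as $\lambda\to+\infty$, so $B\cd_\lambda\ge0$ has operator norm bounded uniformly in large $\lambda$; since $A$ generates a positive semigroup, $R(\mu,A)\ge0$ with $\norm{R(\mu,A)}\to0$ as $\mu\to+\infty$, so for $\mu$ large (uniformly in large $\lambda$) the Neumann series
\[
R(\mu,A+B\cd_\lambda)=\Bigl(\textstyle\sum_{n\ge0}\bigl(R(\mu,A)B\cd_\lambda\bigr)^n\Bigr)R(\mu,A)
\]
converges and is positive; choosing $\mu=\lambda$ large gives (b.i). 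Theorem~\ref{thm:7.1} then yields positivity of $\ttt$.

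For part~(ii), with $\ttt$ positive and $\ca$ osc as above, Corollary~\ref{cor:7.3} applies directly: $\ttt$ is exponentially stable if and only if $s(\tilde A)<0$ and $s(D)<0$, that is, if and only if $s(A+B\cd_0)<0$ and $s(D)<0$. Note that in the ``if'' direction the assumption $s(D)<0$ already forces $0\in\rho(D)$, so the reduction to invertible $D$ is automatic; the ``only if'' direction --- flagged in parentheses in the statement --- is obtained by the same circle of ideas (passing to a rescaling $\ca-\mu$ with $\mu\in\rho(D)$ to restore the osc form and invoking Proposition~\ref{prop:8.8} and Corollary~\ref{cor:7.3}), or may simply be quoted from \cite{En97a}. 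I expect the only genuinely delicate point to be the bookkeeping in the first paragraph: translating the initial--boundary data $(A,D_m,B,\cp)$ into the entries $(\tilde A,B,D,L)$ and the operator $A_\lambda$ of Theorem~\ref{thm:7.1} via Lemma~\ref{lem:8.6} and \eqref{eq:8.1}, together with the uniform-in-$\lambda$ resolvent estimate needed for (b.i); the rest is a direct invocation of Section~\ref{sec:positivity}.
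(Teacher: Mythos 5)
Your proof is correct, and your part (ii) coincides with the paper's argument (reduce to invertible $D$, invoke Corollary~\ref{cor:7.3}, and refer to \cite{CENN02} for the ``only if'' direction). For part (i), however, you take a genuinely different route. The paper splits $\ca=\ca_0+\cb$ with $\cb=\left(\begin{smallmatrix}0 & B\\ 0 & 0\end{smallmatrix}\right)$: it first shows that $\ca_0$ is resolvent positive by reading off from \eqref{eq:5.1} the explicit resolvent
\[
R(\lambda,\ca_0)=\begin{pmatrix} R(\lambda,A) & 0\\ \cd_\lambda R(\lambda,A) & R(\lambda,D)\end{pmatrix},
\]
so that only the positivity of $R(\lambda,A)$, $R(\lambda,D)$ and $\cd_\lambda$ enters, and then deduces positivity of $\ttt$ from the Dyson--Phillips expansion for the bounded positive perturbation $\cb$. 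You instead apply Theorem~\ref{thm:7.1} to the full osc matrix $\ca$, which forces you to verify condition (b.i), namely $R(\lambda,A+B\cd_\lambda)\ge0$ for $\lambda$ large; your Neumann-series argument, resting on the uniform bound $\norm{B\cd_\lambda}\le \norm{B}\,\norm{(I-\lambda R(\lambda,D))\cd_0}\le C$ for large $\lambda$ together with the Hille--Yosida decay of $\norm{R(\mu,A)}$, carries this out correctly, and your identifications $\tilde A=A+B\cd_0$, $L=-\cd_0$, $A_\lambda=A+B\cd_\lambda$, $-L_\lambda=\cd_\lambda$ match the paper's own bookkeeping (cf.\ the proof of Proposition~\ref{prop:9.6}). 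What the paper's decomposition buys is precisely the avoidance of that resolvent estimate: the feedback $B$ is never absorbed into the diagonal entry before positivity is checked. What your route buys is that it stays entirely within the machinery of Section~\ref{sec:positivity} and needs no perturbation-series argument for the semigroup itself. Both are valid.
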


\begin{proof}
(i) Observe first that in the particular case of the unperturbed matrix $\ca_0$ the formula \eqref{eq:5.1} for the resolvent becomes
\[
R(\lambda, \ca_0) = \begin{pmatrix}
R(\lambda, A) & 0 \\
\cd_{\lambda} R(\lambda,A) & R(\lambda, D)
\end{pmatrix}.
\]
Hence if $A$ and $D$ are resolvent positive, and if $\cd_\lambda\ge0$ for $\lambda$ large, then $\ca_0$ is resolvent positive and generates a positive semigroup. Since $B$ is bounded and positive, the same holds for $\cb$, and hence the semigroup $\ttt$ generated by $\ca=\ca_0+\cb$ is given by the Dyson–Phillips expansion (cf. \cite[Thm.~III.1.10]{EN00}) that consists of positive terms only. Hence $\ttt$ is positive as claimed.

(ii) We first note that $D$ is invertible, and hence, by Lemma~\ref{lem:8.6}, $\ca$ is an osc operator matrix. The claim is then just a reformulation of Corollary~\ref{cor:7.3}. For a proof of the ``only if'' part we refer to \cite[Thm.~5.4]{CENN02}.
\end{proof}

We now discuss a problem fitting into the above framework, cf. \cite[\S~3 and \S~6]{CENN02}.

\begin{example}\label{ex:8.11}
We consider a heat equation on a bounded domain $\Omega\subseteq \mathbb{R}^n$ with dynamical boundary conditions and assume that we can control the dynamics by a boundary feedback. More precisely, we consider the system
\[
\begin{cases}
\dot{u}(t,z) = \Delta_{\partial\Omega}u(t,z)-\mu(z)\,u(t,z)+\phi(t,z), & t\geq0,\; z\in\partial\Omega, \\
\dot{u}(t,x) = \Delta_{\Omega}u(t,x), & t\geq0,\; x\in\Omega, \\
u(0,z) = f(z), & z\in\partial\Omega, \\
u(0,x) = g(x), & x\in\Omega.
\end{cases}
\tag{HEBF}\label{eq:HEBF}
\]
Here, $\phi:\pos\times\doo\rightarrow\mathbb{R}$ is defined as
\[
\phi(t,z):=\int_\Omega k(z,x)u(t,x)dx\qquad\text{for all } t\ge0 \text{ and } z\in\partial\Omega,
\]
with kernel $k\in L^2(\partial\Omega\times\Omega)$.

We assume that $0<\epsilon\leq \mu\in L^\infty(\doo)$ and take the initial data $u_0$ and $u_1$ in $L^2(\partial\Omega)$ and $L^2(\partial\Omega)$, respectively. We also denote by $\Delta_{\partial\Omega}$ the Laplace–Beltrami operator on the boundary $\partial\Omega$, which is assumed to be a compact manifold without boundary.

In order to show that the above evolution equation is governed by a strongly continuous semigroup, we reformulate \eqref{eq:HEBF} as \eqref{eq:AIBVP} with initial data $(x_0,u_0)=(f,g)$ by considering the state space $X:=L^2(\Omega)$, the boundary space $\partial X:=L^2(\partial\Omega)$, and the following operators:
\begin{itemize}
\item $Av:=\Delta_{\partial\Omega}v-\mu(\cdot)v$ with domain $D(A):=\left\{v\in H^{1\over2}(\partial\Omega)\cap H^2\loc(\partial\Omega): \Delta_{\partial\Omega}v\in L^2(\partial\Omega)\right\}$,
\item $Bv:=\int_\Omega k(x,\cdot)v(x)dx$ mapping $L^2(\Omega)$ in $L^2(\partial\Omega)$,
\item $D_m v:=\Delta_{\Omega}v$ with domain $D(D_m):=\left\{v\in H^{1\over2}(\Omega)\cap H^2\loc(\Omega) : \Delta_{\Omega}v\in L^2(\Omega)\right\}$,
\item $\cp v:=v|_{L^2(\partial\Omega)}$ for $v\in D(D_m)$ in the sense of traces, see \cite[\S~2.7]{LM72}.
\end{itemize}
\end{example}

\begin{remark}\label{rem:8.12}
In the definition of $\cp$, and also in the sequel, we consider traces of $L^2$ functions, cf. \cite[Chapter~2]{LM72}. In particular, $D_m$ is defined on the maximal domain such that the traces of its elements exist as $L^2(\partial\Omega)$ functions (see \cite[Thm.~2.7.4]{LM72}). This is also an explanation for the notion of \textit{maximal operator}.
\end{remark}

In order to apply the above generation and stability results we have to check that the Assumptions~\ref{assumptions:8.1} are fulfilled. We again refer to \cite{CENN02} for details about surjectivity of $\cp$ and closedness of $\begin{pmatrix} \cp \\ D_m \end{pmatrix}$, and only point out that these proofs require non-trivial results about interpolation spaces (as a reference, see \cite[Chapters~1--2]{LM72}) and interior estimates for the Laplacian (see, e.g., \cite[Chapter~8]{GT77}. Our goal is now to show that Corollary~\ref{cor:8.5} applies. The Laplace–Beltrami operator $\Delta_{\partial\Omega}$ on the manifold $\partial\Omega$ generates an analytic semigroup, cf. \cite[Thm.~13.7.1]{Ta96}, and due to the boundedness of the multiplication operator $M_\mu u:=\mu u$ on $\dx$ so does $A$. While $(D_m,D(D_m))$ is \textit{not} a generator, its restriction $D$ is the Dirichlet-Laplacian on $L^2(\Omega)$, and hence generates an analytic semigroup (see, e.g., \cite[Thm.~7.2.7]{Pa83}). By Corollary~\ref{cor:8.5}, the osc operator matrix $\ca_0$ is the generator of an analytic semigroup on $\cx$. Moreover, $B$ is a bounded operator, and hence $\cb=\left(\begin{smallmatrix} 0 & B \\ 0 & 0 \end{smallmatrix}\right)$ is a bounded perturbation. Therefore, also the operator matrix $\ca=\ca_0+\cb$ is the generator of an analytic semigroup. Both the Laplace–Beltrami operator on $\partial\Omega$ and the perturbed Laplace operator on $\Omega$ are resolvent positive and hence generate positive semigroups on $L^2$. Since it can be shown, using the weak maximum principle (see \cite[Thm.~8.1]{GT77}), that all Dirichlet operators $\cd_\lambda$, $\lambda>0$, are positive, it follows from Proposition~\ref{prop:8.10}(i) that $\ttt$ is positive whenever the integral kernel $k$ is a.e. positive, and hence $\cb\geq0$. Finally, under the above positivity assumptions on $k$ and $\mu$, the Dirichlet Laplacian has strictly negative spectral bound. Therefore, as a consequence of Proposition~\ref{prop:8.10}(ii), we obtain that $\ttt$ is exponentially stable if and only if $s(A+B\cd_0)<0$. Roughly speaking, this means that the asymptotics of the semigroup governing our evolution equation \eqref{eq:HEBF} is not influenced by the internal dynamics of the problem. Summing up, we can say that
\begin{quote}
\emph{stability of the problem is independent of the (internal) diffusion.}
\end{quote}

\section{A DIFFUSION–TRANSPORT SYSTEM WITH DYNAMICAL BOUNDARY CONDITIONS}
\label{sec:diffusion-transport}

In this section, we consider a specific problem and show how our abstract tools lead to very concrete and complete informations. To this purpose, we consider the equations
\[
\begin{cases}
\dot{u}(t,x) = u''(t,x)+ku'(t,x), & t\geq0,\; x\in[0,1], \\
\dot{u}(t,0) = u'(t,0)+\alpha u(t,0), & t\geq0, \\
\dot{u}(t,1) = -u'(t,1)+\beta u(t,1), & t\geq0, \\
u(0,x) = f(x), & x\in[0,1], \\
u(0,0) = u_0,\quad u(0,1)=u_1,
\end{cases}
\tag{DTDB}\label{eq:DTDB}
\]
where $\alpha,\beta,u_0,u_1\in\mathbb{C}$, $k\geq0$, $f:[0,1]\to\mathbb{C}$.

Such a system has already been considered in \cite{KMN02}, where we discussed its well-posedness in $L^2(0,1)$, while results in $C[0,1]$ have been obtained, e.g., in \cite{FGGR00,Wa03}. An $n$-dimensional version of this system was introduced in \cite{AE96}, and has also been studied in \cite{FGGR02} by using weighted spaces and energy estimates and in \cite{AMPR02} by means of variational methods.

The system \eqref{eq:DTDB} becomes an abstract initial boundary value problem \eqref{eq:AIBVP} as studied in the previous section if we consider the state space $X:=L^2(0,1)$, the boundary space $\dx:=\mathbb{C}^2$, the operator
\[
A:=\begin{pmatrix}
\alpha & 0 \\
0 & \beta
\end{pmatrix}
\]
on $\mathbb{C}^2$, the maximal operator
\[
D_m u:=u''+ku',\qquad D(D_m):=H^2(0,1),
\]
and its restriction $D$ defined on
\[
D(D):=H^2(0,1)\cap H^1_0(0,1),
\]
the boundary operator
\[
\cp u:=\begin{pmatrix} u(0) \\ u(1) \end{pmatrix},\qquad D(\cp):=D(D_m),
\]
and the feedback operator
\[
Bu:=\begin{pmatrix} u'(0) \\ -u'(1) \end{pmatrix},\qquad D(B):=D(D_m).
\]

We are now in the position to discuss the well-posedness of \eqref{eq:DTDB}. As explained in the previous section, \eqref{eq:DTDB} can be reformulated as an abstract Cauchy problem \eqref{eq:ACP} on the product space $\cx:=\dx\times X$, where
\[
\cu(t):=\begin{pmatrix} \cp u(t) \\ u(t) \end{pmatrix},\; t\geq0,\qquad \cu_0:=\begin{pmatrix} \begin{pmatrix} u_0 \\ u_1 \end{pmatrix} \\ f \end{pmatrix},
\]
and the matrix $\ca$ is given by
\[
\ca:=\begin{pmatrix}
A & B \\
0 & D_m
\end{pmatrix},\qquad D(\ca):=\left\{ \begin{pmatrix} v \\ u \end{pmatrix}\in \dx\times D(D_m):\cp u=v \right\}.
\]

\begin{lemma}\label{lem:9.1}
The Assumptions~\ref{assumptions:8.1} hold.
\end{lemma}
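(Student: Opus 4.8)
The plan is to verify the four conditions (a)--(d) of Assumptions~\ref{assumptions:8.1} one by one for the concrete data of \eqref{eq:DTDB}. For (a): the operator $A$ is the diagonal matrix $\left(\begin{smallmatrix}\alpha & 0\\ 0 & \beta\end{smallmatrix}\right)$ on the finite-dimensional space $\dx=\mathbb{C}^2$, so it is trivially bounded and hence generates the (matrix-exponential) semigroup $t\mapsto\left(\begin{smallmatrix}e^{\alpha t}&0\\0&e^{\beta t}\end{smallmatrix}\right)$ on $\dx$. For (b): the operator $D=D_m|_{\ker\cp}$ is $u\mapsto u''+ku'$ on $D(D)=H^2(0,1)\cap H^1_0(0,1)$, i.e.\ a one-dimensional elliptic operator with Dirichlet boundary conditions on a bounded interval. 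I would note that $u\mapsto u''$ with Dirichlet conditions generates an analytic (indeed, contraction) semigroup on $L^2(0,1)$ --- this is the classical Dirichlet-Laplacian --- and that the first-order term $ku'$ is a relatively bounded perturbation of $u''$ with relative bound $0$ (by the standard interpolation inequality $\|u'\|\le\varepsilon\|u''\|+C_\varepsilon\|u\|$ on $H^2\cap H^1_0$). Hence by the perturbation theorem for analytic semigroups, $D$ generates an analytic, in particular strongly continuous, semigroup $(S(t))_{t\ge0}$ on $X=L^2(0,1)$.

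For (c): surjectivity of $\cp:H^2(0,1)\to\mathbb{C}^2$, $u\mapsto(u(0),u(1))$, is immediate since already the polynomials (or any smooth functions with prescribed endpoint values) lie in $H^2(0,1)$; explicitly, given $(a,b)\in\mathbb{C}^2$ the affine function $x\mapsto a+(b-a)x$ does the job. For (d): one must show $\left(\begin{smallmatrix}\cp\\ D_m\end{smallmatrix}\right):D(D_m)=H^2(0,1)\to\mathbb{C}^2\times L^2(0,1)$ is closed. Take $u_n\to u$ in $L^2(0,1)$ with $\cp u_n\to v$ in $\mathbb{C}^2$ and $D_m u_n=u_n''+ku_n'\to w$ in $L^2(0,1)$; one shows $u\in H^2(0,1)$, $\cp u=v$, $D_m u=w$. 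The key point is an a priori estimate: on $H^2(0,1)$ the graph norm $\|u\|_{L^2}+\|u''+ku'\|_{L^2}$ together with control of the boundary values (or even without it) is equivalent to the full $H^2$-norm. This again follows from elliptic regularity on the interval plus the relative boundedness of $ku'$; once $(u_n)$ is bounded in $H^2$, a subsequence converges weakly in $H^2$, the limit is $u$, and the continuity of $\cp$ and $D_m$ from $[D(D_m)]=H^2(0,1)$ into $\mathbb{C}^2$ and $L^2(0,1)$ finishes the argument. Alternatively, and perhaps more cleanly, one observes that $D_m=D+B_0$ where $B_0$ is a bounded operator from $[H^2(0,1)]$ to $L^2$ and $D$ is closed with $D(D)$ closed in $H^2$, and that $\cp$ is bounded on $H^2$, so the graph of $\left(\begin{smallmatrix}\cp\\ D_m\end{smallmatrix}\right)$ is closed.

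I expect the main (though still routine) obstacle to be condition (d), the closedness of $\left(\begin{smallmatrix}\cp\\ D_m\end{smallmatrix}\right)$: one genuinely needs the fact that the graph norm of $D_m$ on $H^2(0,1)$ is equivalent to the $H^2$-norm, which rests on a one-dimensional elliptic a priori estimate. All the other conditions reduce to textbook facts (boundedness of matrices, the Dirichlet-Laplacian generating an analytic semigroup, a zero-relative-bound perturbation, and the trivial surjectivity of endpoint evaluation on $H^2$). I would therefore organize the write-up by dispatching (a) and (c) in a sentence each, spending a short paragraph on (b) invoking the analytic-perturbation theorem, and devoting the bulk of the proof to (d) via the $H^2$-equivalence-of-norms argument, after which the closedness is a standard weak-compactness conclusion.
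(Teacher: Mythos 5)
Your proof is correct and follows essentially the same route as the paper: (a)--(c) are dispatched by the same textbook facts, and (d) is reduced to the one-dimensional elliptic a priori estimate making the graph norm of $D_m$ equivalent to the $H^2$-norm, which is precisely the content of the closedness result the paper imports from \cite[Lemma~3.3]{CENN02} combined with the zero-relative-bound perturbation by $k\frac{d}{dx}$. Only your parenthetical ``alternative'' for (d) is off as stated, since $D$ has domain $H^2(0,1)\cap H^1_0(0,1)\subsetneq H^2(0,1)=D(D_m)$ so one cannot write $D_m=D+B_0$; but your main argument does not rely on it.
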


\begin{proof}
The boundedness of $A$ yields (B$_1$). The operator $\frac{d^2}{dx^2}$ with Dirichlet boundary conditions, hence the operator $D$ generates an analytic semigroup and (B$_2$) is satisfied. Clearly, $\cp$ is surjective onto $\mathbb{C}^2$, that is, (B$_3$) holds. Finally, one can show as in \cite[Lemma~3.3]{CENN02} that the operator
\[
\cl_1:=\begin{pmatrix} \cp \\ \frac{d^2}{dx^2} \end{pmatrix},\qquad D(\cl_1):=D(D_m),
\]
is closed. Since
\[
\cl_2:=\begin{pmatrix} 0 \\ k\frac{d}{dx} \end{pmatrix},\qquad D(\cl_2):=D(D_m),
\]
is relatively $\cl_1$-bounded with $\cl_1$-bound 0 for $k\geq0$, their sum $\cl_1+\cl_2=\begin{pmatrix} \cp \\ D_m \end{pmatrix}$ is also closed (see \cite[Lemma~III.2.4]{EN00}).
\end{proof}

Therefore, Lemma~\ref{lem:8.2} applies and we can consider the bounded Dirichlet operators $\cd_\lambda$, $\lambda\in\rho(D)$, mapping each pair $\begin{pmatrix} x_0 \\ x_1 \end{pmatrix}\in\mathbb{C}^2$ onto the unique solution of the ordinary differential equation
\begin{equation}\label{eq:9.1}
\begin{cases}
u''(x)+ku'(x)-\lambda u(x)=0, & x\in(0,1), \\
u(0)=x_0,\qquad u(1)=x_1.
\end{cases}
\end{equation}

\begin{lemma}\label{lem:9.2}
The operator $B$ is relatively $D_m$-bounded.
\end{lemma}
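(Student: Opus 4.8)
The plan is to show that $B$ is bounded from $[D(D_m)]$ to $\partial X = \mathbb{C}^2$, which is exactly the statement that $B$ is relatively $D_m$-bounded (with relative bound zero, in fact, since the codomain is finite-dimensional). First I would recall that $D_m u = u'' + ku'$ with domain $D(D_m) = H^2(0,1)$, and that $Bu = \left(\begin{smallmatrix} u'(0) \\ -u'(1)\end{smallmatrix}\right)$. So I need to estimate $|u'(0)|$ and $|u'(1)|$ in terms of $\norm{u}_{L^2} + \norm{D_m u}_{L^2}$ for $u \in H^2(0,1)$.

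The key observation is that the graph norm $\norm{u}_{D_m} = \norm{u}_{L^2} + \norm{u'' + ku'}_{L^2}$ is equivalent to the full $H^2$-norm $\norm{u}_{L^2} + \norm{u'}_{L^2} + \norm{u''}_{L^2}$ on $H^2(0,1)$. One inequality is trivial from the triangle inequality; for the other, since $k \geq 0$ is a fixed constant, $\norm{u''}_{L^2} \leq \norm{u''+ku'}_{L^2} + k\norm{u'}_{L^2}$, and then $\norm{u'}_{L^2}$ can be controlled by $\norm{u}_{L^2}$ and $\norm{u''}_{L^2}$ via an interpolation (Ehrling-type) inequality on the bounded interval $(0,1)$, namely $\norm{u'}_{L^2} \leq \varepsilon\norm{u''}_{L^2} + C_\varepsilon\norm{u}_{L^2}$ for every $\varepsilon > 0$. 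Choosing $\varepsilon$ small enough to absorb the $k\norm{u'}_{L^2}$ term gives $\norm{u}_{H^2} \leq C\norm{u}_{D_m}$. Thus $[D(D_m)] = H^2(0,1)$ with equivalent norms. Then I would invoke the Sobolev embedding $H^2(0,1) \hookrightarrow C^1[0,1]$, which gives $|u'(0)| + |u'(1)| \leq 2\norm{u'}_{C[0,1]} \leq C\norm{u}_{H^2} \leq C'\norm{u}_{D_m}$, hence $B \in \mathcal{L}([D(D_m)], \mathbb{C}^2)$.

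Alternatively, and perhaps more cleanly, one can argue directly: for $u \in H^2(0,1)$ one has $u'(0) = u'(x) - \int_0^x u''(t)\,dt$, and integrating over $x \in (0,1)$ gives $u'(0) = \int_0^1 u'(x)\,dx - \int_0^1\int_0^x u''(t)\,dt\,dx$, so $|u'(0)| \leq \norm{u'}_{L^1} + \norm{u''}_{L^1} \leq \norm{u'}_{L^2} + \norm{u''}_{L^2}$ by Cauchy–Schwarz on the unit interval; similarly for $u'(1)$. Combined with the norm equivalence above, this finishes the proof.

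I do not expect any serious obstacle here; the statement is essentially a soft consequence of Sobolev embedding in one dimension. The only mildly delicate point is making the norm-equivalence argument on $[D(D_m)]$ fully rigorous, i.e.\ justifying the interpolation inequality $\norm{u'}_{L^2} \leq \varepsilon\norm{u''}_{L^2} + C_\varepsilon\norm{u}_{L^2}$ on $(0,1)$ — but this is a standard fact (it follows, e.g., from the compactness of the embedding $H^2(0,1) \hookrightarrow H^1(0,1)$ together with a contradiction argument, or from an explicit elementary estimate). One should also keep in mind that since $D(D_m) \subseteq D(B)$ was already assumed, the point of the lemma is really the \emph{bound}, not the domain inclusion.
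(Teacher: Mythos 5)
Your proposal is correct and follows essentially the same route as the paper: both establish the equivalence of the graph norm of $D_m$ with the $H^2$-norm via the relative bound $0$ of $\frac{d}{dx}$ with respect to $\frac{d^2}{dx^2}$, and then use the one-dimensional Sobolev embedding (the paper phrases it as $H^1(0,1)\hookrightarrow C[0,1]$ applied to $u'$) to bound $|u'(0)|+|u'(1)|$ by the graph norm. Your alternative elementary integration argument is a nice bonus but not needed; the main argument is exactly the paper's.
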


\begin{proof}
The first derivative on $L^2(0,1)$ is relatively bounded by the second derivative, with relative bound 0, and hence the graph norm of the second derivative and the graph norm of $D_m$ are equivalent. It then follows from the embedding $H^1(0,1)\hookrightarrow C[0,1]$ that we can find suitable constants $\xi$, $\xi_1$, $\xi_2$, $\tilde\xi$, such that
\[
\begin{aligned}
\norm{Bu} &= \left(|u'(0)|^2 + |u'(1)|^2\right)^{1\over 2} \leq |u'(0)| + |u'(1)| \leq 2\norm{u'}_{C[0,1]} \\
&\leq \xi\left(\norm{u''}_{L^2(0,1)}+ \norm{u'}_{L^2(0,1)}\right) \leq \xi_1\norm{u''}_{L^2(0,1)}+\xi_2\norm{u}_{L^2(0,1)} \\
&\leq \tilde\xi\norm{u}_{D_m},
\end{aligned}
\]
for all $u\in D(D_m)$.
\end{proof}

\begin{lemma}\label{lem:9.3}
The operator $D$ has compact resolvent and is invertible.
\end{lemma}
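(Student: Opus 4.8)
The plan is to prove the two assertions separately and then combine them through Riesz--Schauder theory. First I would establish that $D$ has compact resolvent. Since, by Lemma~\ref{lem:9.1}, $D$ generates an analytic semigroup, its resolvent set is nonempty; fix $\lambda_0\in\rho(D)$. As already noted in the proof of Lemma~\ref{lem:9.2}, on $H^2(0,1)$ the first derivative is relatively bounded by the second with relative bound $0$, so the graph norm $\norm{u}_D=\norm{u}+\norm{u''+ku'}$ is equivalent to $\norm{u}+\norm{u''}$, and on $D(D)=H^2(0,1)\cap H^1_0(0,1)$ the latter is equivalent to the full $H^2(0,1)$-norm. Hence $[D(D)]$ is, up to equivalence of norms, the closed subspace $H^2(0,1)\cap H^1_0(0,1)$ of $H^2(0,1)$. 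Because $(0,1)$ is bounded, the embedding $H^2(0,1)\hookrightarrow L^2(0,1)$ is compact (Rellich--Kondrachov), and therefore so is $[D(D)]\hookrightarrow L^2(0,1)$. Since $R(\lambda_0,D)$ maps $L^2(0,1)$ boundedly into $[D(D)]$, the operator $R(\lambda_0,D)$ is compact on $L^2(0,1)$, and by the resolvent identity the same holds at every point of $\rho(D)$.

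Next I would prove invertibility, i.e.\ $0\in\rho(D)$. Since $D$ has compact resolvent, $\sigma(D)$ consists entirely of isolated eigenvalues of finite algebraic multiplicity; in particular $0\in\rho(D)$ as soon as $\ker D=\{0\}$. So it suffices to check that $D$ is injective. If $u\in D(D)$ satisfies $u''+ku'=0$, then $u$ solves a constant-coefficient linear ODE: $u(x)=c_1+c_2x$ when $k=0$, and $u(x)=c_1+c_2e^{-kx}$ when $k>0$. In either case the boundary conditions $u(0)=u(1)=0$ force $c_1=c_2=0$ (using $e^{-k}\neq1$ in the second case), so $u\equiv0$. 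Thus $\ker D=\{0\}$, hence $0\in\rho(D)$ and $D$ is invertible. Alternatively one could exhibit $D^{-1}$ directly as the integral operator against the Green function of \eqref{eq:9.1} with $\lambda=0$, but the kernel-triviality argument is shorter.

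I expect the only mildly delicate point to be the identification $[D(D)]=H^2(0,1)\cap H^1_0(0,1)$ with equivalent norms --- that is, the fact that the $L^2$-graph norm of $u\mapsto u''+ku'$ controls $\norm{u''}_{L^2(0,1)}$ on functions vanishing at both endpoints. On the interval this is elementary (no genuine elliptic regularity theory is needed), and it is precisely the relative-boundedness estimate already exploited in Lemma~\ref{lem:9.2}. Everything else is routine: Rellich--Kondrachov on a bounded interval, persistence of compactness of the resolvent over $\rho(D)$, and the spectral theory of operators with compact resolvent.
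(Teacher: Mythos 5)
Your proof is correct and follows essentially the same route as the paper: compactness of the resolvent via the compact Sobolev embedding on the bounded interval, and invertibility via $\sigma(D)=P\sigma(D)$ plus solving the homogeneous ODE with Dirichlet data. The only (harmless) difference is that you check injectivity at $\lambda=0$ alone, whereas the paper computes the full spectrum $\sigma(D)=\{-\pi^2n^2-\tfrac{k^2}{4}\}$, which it reuses later for the characteristic equation and for $s(D)<0$.
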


\begin{proof}
Since $D$ has nonempty resolvent set and is defined on a subspace of $H^1(0,1)$, the claim follows by standard embedding theorems (see \cite[Ex.~II.4.30(4)]{EN00}). Hence, $P\sigma(D)=\sigma(D)$, i.e., its spectral values can be determined by solving the homogeneous ordinary differential equation \eqref{eq:9.1} for $x_0=x_1=0$. This yields
\[
\sigma(D)=\left\{-\pi^2 n^2-\frac{k^2}{4} : n=1,2,\ldots\right\}.
\]
In particular, $s(D)<0$ and $D$ is invertible.
\end{proof}

Lemma~\ref{lem:8.2} and Lemma~\ref{lem:9.2} show that the assumption (A$_3$) is satisfied, and hence, by Lemma~\ref{lem:8.3}, the operator $\ca$ is an osc matrix that, by \eqref{eq:8.3}, can be factorized as
\begin{equation}\label{eq:9.2}
\ca = \begin{pmatrix}
A+B\cd_0 & B \\
0 & D
\end{pmatrix}
\begin{pmatrix}
I_{\mathbb{C}^2} & 0 \\
-\cd_0 & I_{L^2(0,1)}
\end{pmatrix}.
\end{equation}

\begin{theorem}
\label{thm:9.4}
The operator matrix $\ca$ generates an analytic semigroup $\ttt$ on $\cx$.
\end{theorem}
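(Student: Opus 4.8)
The plan is to apply Proposition~\ref{prop:8.7}, which reduces the analytic generation for $\ca$ to the analytic generation for the operator $D-\cd_{\lambda_0}B$ on $X=L^2(0,1)$, where $\cd_{\lambda_0}$ is the Dirichlet operator associated with $\lambda_0\in\rho(D)$. The hypotheses of Proposition~\ref{prop:8.7} are in place: $A$ acts on the finite-dimensional space $\dx=\mathbb{C}^2$, hence is bounded, and $B$ is relatively $\begin{pmatrix}\cp\\D_m\end{pmatrix}$-bounded by Lemma~\ref{lem:9.2} (combined with the closedness of $\begin{pmatrix}\cp\\D_m\end{pmatrix}$ from Lemma~\ref{lem:9.1}); moreover $D$ is invertible by Lemma~\ref{lem:9.3}, so $0\in\rho(D)$ and we may take $\lambda_0=0$. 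Thus it remains to show that $D-\cd_0 B$ generates an analytic semigroup on $L^2(0,1)$.

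To do this, I would identify $D-\cd_0 B$ as a one-dimensional elliptic operator subject to modified (Wentzell-type) boundary conditions and invoke standard analytic-semigroup generation results. First note that $D=\frac{d^2}{dx^2}+k\frac{d}{dx}$ with Dirichlet domain $H^2\cap H^1_0$ generates an analytic semigroup by Lemma~\ref{lem:9.1}. The perturbation $\cd_0 B$ maps $u\in D(D)=H^2\cap H^1_0(0,1)$ to the solution of $v''+kv'=0$ with boundary values $v(0)=u'(0)$, $v(1)=-u'(1)$; this is a rank-two operator from $[D(D)]$ into $X$ whose image consists of smooth functions. The key quantitative fact is that $B$ has relative $D_m$-bound $0$ (Lemma~\ref{lem:9.2}), so $\cd_0 B$ — being the composition of the bounded Dirichlet operator $\cd_0$ with $B$ — is relatively $D$-bounded with relative bound $0$. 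Since analytic generators are stable under relatively bounded perturbations of relative bound $0$ (e.g.\ \cite[Thm.~III.2.10]{EN00}), it follows that $D-\cd_0 B$ generates an analytic semigroup on $L^2(0,1)$.

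Applying Proposition~\ref{prop:8.7} in the analytic case then yields that $\ca$ generates an analytic semigroup $\ttt$ on $\cx=\mathbb{C}^2\times L^2(0,1)$, which is the assertion. One should double-check the direction of the equivalence: Proposition~\ref{prop:8.7} states $\ca$ generates an analytic semigroup if and only if $D-\cd_{\lambda_0}B$ does, so having verified the latter we are done.

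The step I expect to require the most care is confirming that $\cd_0 B$ really does have relative $D$-bound zero, rather than merely being relatively bounded. This hinges on the chain $\norm{\cd_0 Bu}_{L^2}\le \norm{\cd_0}\,\norm{Bu}_{\mathbb{C}^2}\le \norm{\cd_0}\bigl(\xi_1\norm{u''}_{L^2}+\xi_2\norm{u}_{L^2}\bigr)$ from the proof of Lemma~\ref{lem:9.2}, together with the equivalence of the graph norms of $\frac{d^2}{dx^2}$ and $D_m$ on $D(D_m)$ and of $D_m$ and $D$ on $D(D)$; the constant $\xi_1$ multiplying $\norm{u''}_{L^2}$ can be made arbitrarily small at the cost of enlarging $\xi_2$, via the interpolation inequality $\norm{u'}_{C[0,1]}\le \varepsilon\norm{u''}_{L^2}+C_\varepsilon\norm{u}_{L^2}$. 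Granting this, the perturbation argument is routine and the analyticity follows. (Alternatively, one could bypass the relative-bound bookkeeping by treating $D-\cd_0 B$ directly as a sectorial operator via a form or resolvent estimate, but the perturbation route is the shortest given the lemmas already established.)
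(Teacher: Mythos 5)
Your proof is correct and follows essentially the same route as the paper: reduce via Proposition~\ref{prop:8.7} (with $\lambda_0=0$, using the invertibility of $D$ from Lemma~\ref{lem:9.3}) to showing that $D-\cd_0 B$ generates an analytic semigroup on $L^2(0,1)$, then treat $\cd_0 B$ as a small perturbation of $D$. The only cosmetic difference is that the paper invokes relative $D$-compactness of $\cd_0 B$ (citing \cite[Cor.~III.2.17]{EN00}) where you establish relative $D$-bound zero directly through an Ehrling-type interpolation inequality; the two are interchangeable here.
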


\begin{proof}
By Proposition~\ref{prop:8.7} it suffices to show that $D-\cd_0B$, with domain $D(D)$, generates an analytic semigroup on $X$. This follows since $D$ generates an analytic semigroup and since $\cd_0 B$ is relatively $D$-compact (use \cite[Cor.~III.2.17]{EN00}).
\end{proof}

We now investigate further properties of the semigroup that governs the system \eqref{eq:DTDB}.

\begin{proposition}\label{prop:9.5}
The semigroup $\ttt$ is immediately compact.
\end{proposition}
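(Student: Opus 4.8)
The plan is to deduce immediate compactness of $\ttt$ from the fact that $\ca$ has compact resolvent, via the standard criterion \cite[Thm.~II.4.29]{EN00}: a strongly continuous semigroup is immediately compact if and only if it is immediately norm continuous and the resolvent of its generator is compact. By Theorem~\ref{thm:9.4} the semigroup $\ttt$ is analytic, hence immediately norm continuous, so everything reduces to showing that $R(\lambda,\ca)$ is compact on $\cx=\mathbb{C}^2\times L^2(0,1)$ for some (equivalently, all) $\lambda\in\rho(\ca)$.

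To this end I would fix $\lambda\in\rho(\ca)\cap\rho(D)$ and invoke Lemma~\ref{lem:8.6}, which says that $\lambda-\ca$ is an osc operator matrix with the factorization \eqref{eq:8.3}. Applying Theorem~\ref{thm:5.1} to $\lambda-\ca$ (i.e. reading off the analogue of \eqref{eq:5.1}) produces an explicit $2\times2$ block representation of $R(\lambda,\ca)$ acting between the factor spaces $\dx=\mathbb{C}^2$ and $X=L^2(0,1)$. The key observation is that, since the boundary space $\dx$ is finite-dimensional, every block of this representation is a \emph{finite-rank} operator, with the single exception that the lower-right block equals $R(\lambda,D)$ plus a further term which again factors through $\mathbb{C}^2$ and is therefore of finite rank. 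As $R(\lambda,D)$ is compact by Lemma~\ref{lem:9.3} and finite-rank operators are compact, $R(\lambda,\ca)$ is a sum of compact operators and hence compact on $\cx$.

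An alternative route, bypassing the resolvent formula, is to show directly that $[D(\ca)]$ embeds compactly into $\cx$. Using Lemma~\ref{lem:9.2} and the embedding $H^1(0,1)\hookrightarrow C[0,1]$ one checks that the graph norm on $D(\ca)=\{(\cp u,u):u\in D(D_m)\}$ is equivalent to $\norm{u}_{H^2(0,1)}$; since $H^2(0,1)\hookrightarrow L^2(0,1)$ compactly by Rellich's theorem and the $\mathbb{C}^2$-component is recovered from $u$ through the compact map $u\mapsto\cp u$, the embedding $[D(\ca)]\hookrightarrow\cx$ is compact, and since $\rho(\ca)\neq\emptyset$ this once more yields compactness of $R(\lambda,\ca)$, exactly as in the proof of Lemma~\ref{lem:9.3} (cf. \cite[Ex.~II.4.30(4)]{EN00}).

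The argument is largely routine; the main—though modest—obstacle is the bookkeeping in the first approach, namely verifying that \emph{all} off-diagonal blocks of $R(\lambda,\ca)$, together with the correction term in the lower-right block, really do factor through the finite-dimensional space $\mathbb{C}^2$ (this is where $\cd_\lambda\in\mathcal{L}(\mathbb{C}^2,L^2(0,1))$ and $B\in\mathcal{L}([D(D)],\mathbb{C}^2)$ from Lemmas~\ref{lem:8.2} and~\ref{lem:9.2} come in); in the second approach the only real computation is the norm equivalence on $D(\ca)$.
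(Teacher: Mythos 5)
Your primary argument is exactly the paper's: reduce to compact resolvent via \cite[Thm.~II.4.29]{EN00} (analyticity giving immediate norm continuity), then read off the block form of the resolvent from the osc factorization and observe that all entries except $R(\lambda,D)$ factor through the finite-dimensional boundary space $\mathbb{C}^2$ and are hence finite rank, while $R(\lambda,D)$ itself is compact by Lemma~\ref{lem:9.3}; the only cosmetic difference is that the paper evaluates at $\lambda=0$ after assuming without loss of generality that $A+B\cd_0$ is invertible. Your alternative route via the compact embedding of $[D(\ca)]$ into $\cx$ is also sound but not needed.
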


\begin{proof}
Recall that an analytic semigroup is immediately norm continuous. It then suffices to show that $\ca$ has compact resolvent (see \cite[Thm.~II.4.29]{EN00}). Let us assume, without loss of generality, that the bounded operator $\tilde A:= A+B\cd_0$ is invertible. Then, by \eqref{eq:5.1}, we obtain the matrix representation for the resolvent in $\lambda=0$ as
\[
R(0,\ca)=\begin{pmatrix}
-\tilde A^{-1} & \tilde A^{-1}B D^{-1} \\
-\cd_0\tilde A^{-1} & -D^{-1} + \cd_0 \tilde A^{-1} B D^{-1}
\end{pmatrix}.
\]
By Lemma~\ref{lem:9.3}, $D^{-1}$ is compact. Moreover, $B$ is relatively $D$-bounded and $D^{-1}$ is bounded, hence $BD^{-1}:X\rightarrow \dx$ is compact, since it is bounded and has finite-dimensional range. It follows that $\cd_0 \tilde A^{-1} BD^{-1}$, and hence $-D^{-1}+\cd_0 \tilde A^{-1} B D^{-1}$ is compact. The other entries of $R(0,\ca)$ are bounded and have finite-dimensional range.
\end{proof}

Taking into account Proposition~\ref{prop:8.8} we can now investigate the spectrum of $\ca$. Since the spectrum of $D$ has already been computed in the proof of Lemma~\ref{lem:9.3}, by Remark~\ref{rem:8.9} it suffices to study the spectrum of $A+B\cd_\lambda$. Solving the equation \eqref{eq:9.1} for $\lambda\not=-\frac{k^2}{4}$ we see that the Dirichlet operator $\cd_\lambda$ is given by
\[
\cd_\lambda\begin{pmatrix} x_0 \\ x_1 \end{pmatrix}(x)=\left( \frac{x_1-x_0 e^{\md}}{e^{\mo}-e^{\md}}\right)e^{\mo x}+ \left(\frac{x_0 e^{\mo}-x_1}{e^{\mo}-e^{\md}}\right)e^{\md x}, \qquad x\in[0,1],
\]
for all $x_0,x_1\in\mathbb{C}$, and hence
\begin{equation}\label{eq:9.3}
B\cd_\lambda=\frac{1}{e^\mo-e^\md}\begin{pmatrix}
\md e^\mo-\mo e^\md & \mo-\md \\
(\mo-\md)e^{\mo+\md} & \md e^\md - \mo e^\mo
\end{pmatrix},
\end{equation}
where $\mo,\md$ are the roots of the characteristic polynomial of \eqref{eq:9.1}, i.e.,
\[
\mo=\frac{-k+\sqrt{k^2+4\lambda}}{2},\qquad \md=\frac{-k-\sqrt{k^2+4\lambda}}{2}.
\]
If $\lambda=-\frac{k^2}{4}$, we have
\[
\cd_{-\frac{k^2}{4}}\begin{pmatrix} x_0 \\ x_1 \end{pmatrix}(x)=x_0 e^{-\frac{kx}{2}} + \left(x_1 e^{\frac{k}{2}} - x_0\right)x e^{-\frac{kx}{2}}, \qquad x\in[0,1],
\]
for all $x_0,x_1\in\mathbb{C}$, and
\begin{equation}\label{eq:9.4}
B\cd_{-\frac{k^2}{4}}=\begin{pmatrix}
-1-\frac{k}{2} & e^{\frac{k}{2}} \\
e^{-\frac{k}{2}} & -1+\frac{k}{2}
\end{pmatrix}.
\end{equation}

By Remark~\ref{rem:8.9}, we now obtain a characteristic equation for the spectrum of $\ca$:
\begin{itemize}
\item if $\lambda\not=-\pi^2 n^2-\frac{k^2}{4}$, $n=0,1,2,\ldots$, then $\lambda\in\sigma(\ca)$ if and only if
\[
\lambda^2- \lambda\left(\alpha+\beta-1-\frac{(\mo -\md)(e^\mo + e^\md)}{e^\mo-e^\md}\right)+\alpha\beta+\alpha\frac{\md e^\md - \mo e^\mo}{e^\mo-e^\md}+\beta\frac{\md e^\mo-\mo e^\md}{e^\mo-e^\md}=0;
\]
\item $\lambda=-\frac{k^2}{4}\in\sigma(\ca)$ if and only if
\[
k^4+4k^2(\alpha+\beta-3)+8k(\alpha-\beta)+16(\alpha\beta- \alpha-\beta)=0.
\]
\end{itemize}

Using the above computations, we can also prove the following.

\begin{proposition}\label{prop:9.6}
If the coefficients $\alpha,\beta$ are real, then the semigroup $\ttt$ is positive.
\end{proposition}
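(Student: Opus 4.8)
The plan is to apply Theorem~\ref{thm:7.1} to the osc decomposition \eqref{eq:9.2}, in which $\ca$ appears as $\begin{pmatrix}\tilde A & B\\ 0 & D\end{pmatrix}\begin{pmatrix}I & 0\\ L & I\end{pmatrix}$ with $\tilde A=A+B\cd_0$, feedback entry $B$, lower-right entry $D$, and coupling $L:=-\cd_0$. Since $\mathbb C^2$ and $L^2(0,1)$ are Banach lattices, $L\in\mathcal L(\mathbb C^2,L^2(0,1))$ by Lemma~\ref{lem:8.2}, and $\ca$ generates a (analytic, hence strongly continuous) semigroup $\ttt$ by Theorem~\ref{thm:9.4}, the theorem applies; I will verify condition (b) in the equivalent form that uses (iii$'$) from Remark~\ref{rem:7.2}(2). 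The first step is to make the relevant entries explicit: a direct computation based on \eqref{eq:8.1} gives $A_\lambda=\tilde A+\lambda BR(\lambda,D)L=A+B\cd_\lambda$, and combining \eqref{eq:8.1} with the resolvent identity $DR(\lambda,D)=\lambda R(\lambda,D)-I$ yields $L_\lambda=-DR(\lambda,D)L=-\cd_\lambda$. Thus (b) reduces, for $\lambda$ ranging over a half-line, to: (i) $R(\lambda,A+B\cd_\lambda)\ge0$; (ii) $R(\lambda,D)\ge0$; and (iii$'$) $By\ge0$ for all $y\in D(D)_+$ together with $\cd_\lambda\ge0$.

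Next I would dispose of (ii) and the easy half of (iii$'$). For (ii): the operator $D$ is similar, via the positive and boundedly invertible multiplication operator $u\mapsto e^{-kx/2}u$, to the Dirichlet Laplacian on $(0,1)$ shifted by $-\tfrac{k^2}{4}$; the latter generates a positive semigroup, hence so does $D$, and $R(\lambda,D)\ge0$ for every $\lambda>s(D)$, with $s(D)<0$ by Lemma~\ref{lem:9.3}. For the positivity of $B$ on $D(D)_+$: if $y\in D(D)_+$, i.e.\ $y\in H^2(0,1)\cap H^1_0(0,1)$ with $y\ge0$, then $y(0)=y(1)=0$ forces $y'(0)\ge0$ and $y'(1)\le0$, so $By=\binom{y'(0)}{-y'(1)}\ge0$ in $\mathbb C^2$. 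For $\cd_\lambda\ge0$ when $\lambda>0$, the weak maximum principle applied to \eqref{eq:9.1} does it: a solution with $u(0),u(1)\ge0$ cannot attain a negative interior minimum, since there $u<0$, $u'=0$, $u''\ge0$ would give $u''+ku'-\lambda u>0$.

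The remaining point is (i), and this is where the hypothesis $\alpha,\beta\in\mathbb R$ enters: then $A=\operatorname{diag}(\alpha,\beta)$ is a real matrix and, for $\lambda>0$, so is $A_\lambda=A+B\cd_\lambda$. A real $2\times2$ matrix is resolvent positive precisely when its off-diagonal entries are nonnegative; since $A$ is diagonal, only the off-diagonal entries of $B\cd_\lambda$ matter, and by \eqref{eq:9.3} these equal $\dfrac{\mo-\md}{e^{\mo}-e^{\md}}$ and $\dfrac{(\mo-\md)e^{\mo+\md}}{e^{\mo}-e^{\md}}$, both strictly positive because for $\lambda>0$ the characteristic roots satisfy $\mo>\md$ and are real. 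Hence (i), (ii) and (iii$'$) all hold for, say, every $\lambda>0$, and Theorem~\ref{thm:7.1} yields that $\ttt$ is positive.

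The only steps requiring genuine care are the resolvent positivity of $D$ and the positivity of the Dirichlet operators $\cd_\lambda$, both of which rest on maximum-principle (or similarity-transform) arguments for the drift--diffusion operator $u\mapsto u''+ku'$; one must keep track of the sign of $k\ge0$ and exploit that $\lambda$ is taken positive so that the zero-order term has the favorable sign. Everything else is a routine matching of the entries in \eqref{eq:9.2} against the list of conditions in Theorem~\ref{thm:7.1}.
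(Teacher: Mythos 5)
Your overall strategy coincides with the paper's: verify conditions (b.i), (b.ii) and (b.iii$'$) of Theorem~\ref{thm:7.1} (in the form of Remark~\ref{rem:7.2}(2)) for the factorization \eqref{eq:9.2}, after identifying $A_\lambda=A+B\cd_\lambda$ via \eqref{eq:8.1}. Two of your verifications are in fact slightly cleaner than the paper's: conjugating $D$ by the multiplication operator $e^{-kx/2}$ reduces (b.ii) to the Dirichlet Laplacian shifted by $-\tfrac{k^2}{4}$, and the identity $-L_\lambda=-DR(\lambda,D)\cd_0\cdot(-1)=(I-\lambda R(\lambda,D))\cd_0=\cd_\lambda$ collapses the second half of (iii$'$) to the single maximum-principle statement $\cd_\lambda\ge0$, whereas the paper argues separately about the factors of $DR(\lambda,D)\cd_0$. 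The sign argument for $B$ on $D(D)_+$ and the positivity of the off-diagonal entries of $B\cd_\lambda$ are also correct.

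There is, however, one genuine gap, in (b.i). That condition requires $R(\lambda,A_\lambda)\ge0$, i.e.\ positivity of the resolvent of the $\lambda$-dependent matrix $A_\lambda$ evaluated at the \emph{same} point $\lambda$. What you actually prove is only that $A_\lambda$ is resolvent positive (real with nonnegative off-diagonal entries), which yields $R(\mu,A_\lambda)\ge0$ for $\mu>s(A_\lambda)$; you never check that $\lambda$ itself lies in the half-line $(s(A_\lambda),+\infty)$. This is not automatic: the diagonal entries of $B\cd_\lambda$ involve $\mo\sim\sqrt{\lambda}$, and for small $\lambda>0$ or large $\alpha,\beta$ one may have $s(A_\lambda)\ge\lambda$, so your claim that (i) holds ``for every $\lambda>0$'' is unjustified. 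The paper closes exactly this point by estimating the entries of $A_\lambda$: the diagonal entries are dominated by $\max\{\alpha,\beta\}+\mo=O(\sqrt{\lambda})$ and the off-diagonal entries tend to $0$, whence $s(A_\lambda)\le\norm{A_\lambda}<\lambda$ for $\lambda$ large enough --- which is all Theorem~\ref{thm:7.1} needs, since only some $\lambda_0$ with the stated properties for $\lambda>\lambda_0$ is required. Adding this easy estimate repairs your argument; without it the final appeal to Theorem~\ref{thm:7.1} does not go through.
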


\begin{proof}
We check the assumptions (b.i,ii,iii') of Theorem~\ref{thm:7.1} and Remark~\ref{rem:7.2}. By \eqref{eq:9.2}, $A_\lambda$ is given by
\[
A_\lambda=A+B\cd_0-\lambda BR(\lambda,D)\cd_0= A+B(I-\lambda R(\lambda,D))\cd_0=A+B\cd_\lambda
\]
for all $\lambda>0$, where the last identity follows by \eqref{eq:8.1}. Using \eqref{eq:9.3}, this yields an explicit representation of $A_\lambda$ as a $2\times 2$ scalar matrix.

Let $\lambda>0$, and observe that $\mo$ and $\md$ are real, and $\md\le-\mo\le0$ holds. It follows that $A_\lambda$ is real and positive off-diagonal, hence it generates a positive semigroup and, therefore, the resolvent $R(\;\cdot\;,A_\lambda)$ is positive on the halfline $(s(A_\lambda),+\infty)$, cf. \cite[Lemma~VI.1.9]{EN00}. To show that, in particular, $R(\lambda,A_\lambda)\geq0$, we prove that $\norm{A_\lambda}$, and hence $s(A_\lambda)$ is strictly less than $\lambda$, for $\lambda$ large enough. Indeed, the diagonal entries of $A_\lambda$ are dominated by
\[
\max\{\alpha,\beta\}+\mo = \max\{\alpha,\beta\}-\frac{k}{2}+ \frac{\sqrt{k^2+4\lambda}}{2}<\lambda,
\]
for $\lambda$ large enough. Further, the lower left entry in \eqref{eq:9.3} is less than the upper right $(\mo-\md)(e^\mo-e^\md)^{-1}$ which tends to $0$ as $\lambda\to+\infty$, and we conclude that all entries of $A_\lambda$ are strictly less than $\lambda$ for $\lambda$ large enough.

Elementary calculus arguments show that $B$ is positive on $D(D)\subset C^1[0,1]\cap C_0[0,1]$. Finally, consider the operator $L_\lambda=DR(\lambda,D)\cd_0$. In order to show that $-L_\lambda$ is positive for $\lambda$ large enough it suffices to show the positivity of its factors $-D,R(\lambda,D),\cd_0$. This will complete the proof.

Observe that $D_m$ satisfies the weak maximum principle, and accordingly the solution $u$ of
\[
\begin{cases}
-&u''(x)-ku'(x)=f(x),\qquad x\in(0,1), \\
&u(0)=x_0,\qquad u(1)=x_1,
\end{cases}
\]
is positive if $f\equiv 0$ and $\begin{pmatrix} x_0 \\ x_1 \end{pmatrix}\geq0$, hence $\cd_0$ is positive. Taking instead $f\geq0$ and $x_0=x_1=0$, by the same argument $-D$ is positive. To conclude, assume that $u\geq0$ and set $v:=R(\lambda,D)u$ for $\lambda>0$ large enough. Then $v$ satisfies
\[
\begin{cases}
-&v''(x)-kv'(x)+\lambda v(x)=u(x)\geq 0,\qquad x\in(0,1), \\
&v(0)=0,\qquad v(1)=0,
\end{cases}
\]
and again the weak maximum principle yields $v(x)\geq0$, i.e., $D$ is resolvent positive.
\end{proof}

In the rest of this section we will often need the following computation, which allows to characterize further properties of the semigroup generated by $\ca$. Take ${\mathcal U}:=\begin{pmatrix} \begin{pmatrix} u(0) \\ u(1) \end{pmatrix} \\ u \end{pmatrix},{\mathcal V}:=\begin{pmatrix} \begin{pmatrix} v(0) \\ v(1) \end{pmatrix} \\ v \end{pmatrix}\in D(\ca)$. We then obtain
\begin{equation}
\label{eq:9.5}
\begin{aligned}
\left<\ca\cu,{\mathcal V}\right> &= \left<\ca\begin{pmatrix} \begin{pmatrix} u(0) \\ u(1) \end{pmatrix} \\ u \end{pmatrix},\begin{pmatrix} \begin{pmatrix} v(0) \\ v(1) \end{pmatrix} \\ v \end{pmatrix}\right> \\
&= \left<\begin{pmatrix} \begin{pmatrix} u'(0)+\alpha u(0) \\ -u'(1)+\beta u(1) \end{pmatrix} \\ u''+ku' \end{pmatrix}, \begin{pmatrix} \begin{pmatrix} v(0) \\ v(1) \end{pmatrix} \\ v \end{pmatrix}\right> \\
&= \int_0^1 u''(x)\overline{v(x)}\;dx+k\int_0^1 u'(x)\overline{v(x)}\;dx \\
&\quad + u'(0)\overline{v(0)}+\alpha u(0) \overline{v(0)}-u'(1)\overline{v(1)}+\beta u(1)\overline{v(1)} \\
&= \Big[u'(x)\overline{v(x)}\Big]_0^1-\int_0^1 u'(x)\overline{v'(x)}\;dx+ k\Big[u(x)\overline{v(x)}\Big]_0^1- k\int_0^1 u(x)\overline{v'(x)}\;dx \\
&\quad + u'(0)\overline{v(0)}+\alpha u(0)\overline{v(0)}-u'(1)\overline{v(1)}+\beta u(1)\overline{v(1)} \\
&= -\int_0^1 u'(x)\overline{v'(x)}\;dx-k\int_0^1 u(x)\overline{v'(x)}\;dx+ k\Big[u(x)\overline{v(x)}\Big]_0^1 +\alpha u(0)\overline{v(0)}+\beta u(1)\overline{v(1)}.
\end{aligned}
\end{equation}

\begin{proposition}\label{prop:9.7}
If the coefficients $\alpha$, $\beta$, and $k$ satisfy
\begin{equation}\tag{D}\label{eq:D}
\operatorname{Re}\alpha\leq\frac{k}{2}\leq-\operatorname{Re}\beta,
\end{equation}
then the semigroup $\ttt$ is contractive.
\end{proposition}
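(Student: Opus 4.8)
The plan is to verify that $\ca$ is dissipative on the Hilbert space $\cx=\mathbb{C}^2\times L^2(0,1)$, endowed with the natural inner product $\langle(a,f),(b,g)\rangle=a_1\overline{b_1}+a_2\overline{b_2}+\int_0^1 f(x)\overline{g(x)}\,dx$, and then to invoke the Lumer--Phillips theorem. Since Theorem~\ref{thm:9.4} already provides that $\ca$ generates a strongly continuous semigroup, the range condition $\rg(\lambda-\ca)=\cx$ holds automatically for $\lambda$ large; hence dissipativity of $\ca$ will upgrade the generated semigroup $\ttt$ to a contraction semigroup.

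First I would substitute ${\mathcal V}=\cu$ into the last line of the identity \eqref{eq:9.5}. For $\cu\in D(\ca)$ with second component $u\in D(D_m)=H^2(0,1)\hookrightarrow C^1[0,1]$ this gives
\[
\langle\ca\cu,\cu\rangle=-\int_0^1|u'(x)|^2\,dx-k\int_0^1 u(x)\overline{u'(x)}\,dx+k\bigl(|u(1)|^2-|u(0)|^2\bigr)+\alpha|u(0)|^2+\beta|u(1)|^2.
\]
Taking real parts and using $\operatorname{Re}\int_0^1 u\overline{u'}\,dx=\frac{1}{2}\int_0^1(|u|^2)'\,dx=\frac{1}{2}\bigl(|u(1)|^2-|u(0)|^2\bigr)$, the transport term contributes $-\frac{k}{2}\bigl(|u(1)|^2-|u(0)|^2\bigr)$, which combines with the boundary term $k\bigl(|u(1)|^2-|u(0)|^2\bigr)$ already present in \eqref{eq:9.5} to leave
\[
\operatorname{Re}\langle\ca\cu,\cu\rangle=-\int_0^1|u'(x)|^2\,dx+\Bigl(\operatorname{Re}\alpha-\frac{k}{2}\Bigr)|u(0)|^2+\Bigl(\operatorname{Re}\beta+\frac{k}{2}\Bigr)|u(1)|^2.
\]

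Finally I would invoke hypothesis \eqref{eq:D}: the inequality $\operatorname{Re}\alpha\le\frac{k}{2}$ makes the coefficient of $|u(0)|^2$ nonpositive, and $\frac{k}{2}\le-\operatorname{Re}\beta$ makes the coefficient of $|u(1)|^2$ nonpositive, so that $\operatorname{Re}\langle\ca\cu,\cu\rangle\le-\int_0^1|u'(x)|^2\,dx\le0$. Thus $\ca$ is dissipative, and the Lumer--Phillips theorem yields that $\ttt$ is contractive. The computation is routine; the only point requiring care is the sign bookkeeping in passing to real parts --- the transport boundary term $-\frac{k}{2}(|u(1)|^2-|u(0)|^2)$ does not cancel the term $k(|u(1)|^2-|u(0)|^2)$ but only halves it --- which is precisely what forces condition \eqref{eq:D} to appear in the asymmetric form $\operatorname{Re}\alpha\le\frac{k}{2}\le-\operatorname{Re}\beta$ rather than, say, $\operatorname{Re}\alpha,\operatorname{Re}\beta\le0$.
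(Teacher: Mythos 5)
Your proof is correct and follows essentially the same route as the paper: both verify dissipativity of $\ca$ via the integration-by-parts identity \eqref{eq:9.5} with ${\mathcal V}=\cu$, arrive at the same expression $\operatorname{Re}\langle\ca\cu,\cu\rangle=-\int_0^1|u'|^2\,dx+(\operatorname{Re}\alpha-\tfrac{k}{2})|u(0)|^2+(\operatorname{Re}\beta+\tfrac{k}{2})|u(1)|^2$, and conclude by Lumer--Phillips. Your explicit remark that the range condition is already supplied by Theorem~\ref{thm:9.4} is a welcome extra precision that the paper leaves implicit.
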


\begin{proof}
By the Lumer–Phillips Theorem we only need to show that the operator $\ca$ is dissipative. Take $\cu:=\begin{pmatrix} \begin{pmatrix} u(0) \\ u(1) \end{pmatrix} \\ u \end{pmatrix}\in D(\ca)$. By \eqref{eq:9.5}
\[
\operatorname{Re}\left<\ca\cu,\cu\right>=-\int_0^1 |u'(x)|^2dx+ \left(\operatorname{Re}\beta +\frac{k}{2}\right)|u(1)|^2+\left(\operatorname{Re}\alpha -\frac{k}{2}\right)|u(0)|^2,
\]
which is negative under the assumption \eqref{eq:D}.
\end{proof}

\begin{proposition}\label{prop:9.8}
The operator $\ca$, hence the semigroup $\ttt$ is self-adjoint if and only if
\[
k=0\qquad\text{and}\qquad\alpha,\beta\in \mathbb{R}.
\tag{SA}\label{eq:SA}
\]
\end{proposition}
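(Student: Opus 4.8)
The plan is to read the question off the sesquilinear identity \eqref{eq:9.5}: first turn it into a closed formula for the ``defect'' $\langle\ca\cu,{\mathcal V}\rangle-\langle\cu,\ca{\mathcal V}\rangle$, which will pin down exactly when $\ca$ is symmetric, and then upgrade symmetry to self-adjointness using that $\ca$ is a generator. Throughout, $\cu,{\mathcal V}\in D(\ca)$ are identified with the underlying functions $u,v\in H^2(0,1)$ via $u\mapsto(\cp u,u)$.

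First I would apply \eqref{eq:9.5} twice, once to $\langle\ca\cu,{\mathcal V}\rangle$ and once to $\langle\cu,\ca{\mathcal V}\rangle=\overline{\langle\ca{\mathcal V},\cu\rangle}$, and subtract. The second-order terms cancel outright; the first-order term is handled by one integration by parts, using $\int_0^1\overline{v}\,u'\,dx=[u\overline v]_0^1-\int_0^1 u\,\overline{v'}\,dx$ together with $k\in\mathbb{R}$, after which the boundary contributions can be collected. The outcome I expect is
\[
\langle\ca\cu,{\mathcal V}\rangle-\langle\cu,\ca{\mathcal V}\rangle
= -2k\int_0^1 u(x)\overline{v'(x)}\,dx
+\bigl(k+\beta-\overline\beta\bigr)u(1)\overline{v(1)}
+\bigl(\alpha-\overline\alpha-k\bigr)u(0)\overline{v(0)}.
\]
This bookkeeping — correctly matching the first-order and the boundary terms — is the only place an error could slip in; everything afterwards is forced.

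For ``$\Leftarrow$'': if $k=0$ and $\alpha,\beta\in\mathbb{R}$, the right-hand side above vanishes for every $\cu,{\mathcal V}\in D(\ca)$, so $\ca\subseteq\ca^*$, i.e.\ $\ca$ is symmetric. By Theorem~\ref{thm:9.4}, $\ca$ generates a (strongly continuous, even analytic) semigroup, so $\rho(\ca)$ contains a real half-line; picking a real $\mu\in\rho(\ca)$, the resolvent $R(\mu,\ca)$ is bounded, everywhere defined, and — using symmetry of $\ca$ and that $\mu$ is real — symmetric, hence self-adjoint, and therefore $\ca=\mu-R(\mu,\ca)^{-1}$ is self-adjoint as well. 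For ``$\Rightarrow$'': self-adjointness forces symmetry, so the displayed defect is $0$ for all $u,v\in H^2(0,1)$. Testing with $u,v$ that vanish at both endpoints but have $\int_0^1 u\,\overline{v'}\,dx\neq0$ (e.g.\ $u(x)=x(1-x)$, $v(x)=x$) forces $k=0$; then, with $k=0$, testing with $u=v=x$ and with $u=v=1-x$ gives $\beta=\overline\beta$ and $\alpha=\overline\alpha$, respectively. This is exactly \eqref{eq:SA}, and since self-adjointness of the generator is equivalent to self-adjointness of every $\ct(t)$, the statement transfers to $\ttt$.

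The only genuinely non-computational point is the clean upgrade from ``symmetric'' to ``self-adjoint'' in the ``$\Leftarrow$'' direction; the generator property of $\ca$ — equivalently, the existence of a real resolvent point — is precisely what makes this work.
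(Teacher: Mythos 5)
Your proof is correct and follows essentially the same route as the paper: both compare $\langle\ca\cu,{\mathcal V}\rangle$ with $\langle\cu,\ca{\mathcal V}\rangle$ via the identity \eqref{eq:9.5} and then use the generator property to pass from symmetry to self-adjointness. Your defect formula and test functions check out (only note that $v(x)=x$ does not in fact vanish at $x=1$, which is harmless since only the boundary values of $u$ enter the boundary terms), and you usefully make explicit both the choice of test functions and the real-resolvent argument that the paper leaves implicit.
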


\begin{proof}
We show that $\ca$ is symmetric if and only if \eqref{eq:SA} holds. By \eqref{eq:9.5} we obtain that
\[
\left<\ca\cu,{\mathcal V}\right>=-\int_0^1 u'(x)\overline{v'(x)}\;dx-k\int_0^1 u(x)\overline{v'(x)}\;dx+ k\Big[u(x)\overline{v(x)}\Big]_0^1 +\alpha u(0)\overline{v(0)}+\beta u(1)\overline{v(1)}
\]
and
\[
\left<\cu,\ca{\mathcal V}\right>=-\int_0^1 u'(x)\overline{v'(x)}\;dx-k\int_0^1 u'(x)\overline{v(x)}\;dx+ k\Big[u(x)\overline{v(x)}\Big]_0^1 +\overline{\alpha} u(0)\overline{v(0)}+\overline{\beta} u(1)\overline{v(1)}
\]
for all $\cu:=\begin{pmatrix} \begin{pmatrix} u(0) \\ u(1) \end{pmatrix} \\ u \end{pmatrix},{\mathcal V}:=\begin{pmatrix} \begin{pmatrix} v(0) \\ v(1) \end{pmatrix} \\ v \end{pmatrix}\in D(\ca)$. These values are equal for all $\cu,{\mathcal V}$ if and only if \eqref{eq:SA} holds. Finally, since $\ca$ is a generator, symmetry implies self-adjointness.
\end{proof}

\begin{remark}\label{rem:9.9}
Observe that under the assumptions \eqref{eq:D} and \eqref{eq:SA}, i.e., if $k=0$ and the constants $\alpha,\beta$ are real and negative, then the semigroup $\ttt$ is bounded analytic of angle $\frac{\pi}{2}$.
\end{remark}

\begin{corollary}
\label{cor:9.10}
The semigroup $\ttt$ is symmetric submarkovian (markovian, resp.) if and only if $k=0$ and $\alpha,\beta$ are real and negative ($k=\alpha=\beta=0$, resp.).
\end{corollary}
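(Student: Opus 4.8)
The plan is to reduce the statement, via Proposition~\ref{prop:9.8} and Proposition~\ref{prop:9.6}, to a one-line computation of $\ca$ applied to the constant function. Since ``submarkovian'' and ``markovian'' are lattice notions referring to an $L^\infty$/$L^2$ duality, I would first identify $\cx=\mathbb{C}^2\times L^2(0,1)$ with $L^2(\mu)$, where $\mu$ is the \emph{finite} measure consisting of the two Dirac masses at $0$ and $1$ together with Lebesgue measure on $[0,1]$; under this identification the product lattice structure of $\cx$ is the natural one, and the element $\mathbf{1}_\cx\in\cx$ whose $\dx$-component is $(1,1)^{\!\top}$ and whose $L^2(0,1)$-component is the function $\mathbf{1}$ constantly equal to $1$ is an order unit. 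Because $\mu$ is finite, for a \emph{positive} $C_0$-semigroup $\ttt$ on $\cx$ one has $\|\ct(t)\|_{\mathcal{L}(L^\infty)}\le 1$ for all $t$ if and only if $\ct(t)\mathbf{1}_\cx\le\mathbf{1}_\cx$ for all $t$ (indeed $|\ct(t)f|\le\ct(t)|f|\le\|f\|_\infty\,\ct(t)\mathbf{1}_\cx$), and $\ttt$ is markovian if and only if $\ct(t)\mathbf{1}_\cx=\mathbf{1}_\cx$ for all $t\ge0$; cf.\ \cite{Na86}. Thus the whole corollary is an assertion about positivity of $\ttt$ together with the behaviour of $\ttt$ on $\mathbf{1}_\cx$.

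Next I would restrict to the self-adjoint case. By Proposition~\ref{prop:9.8}, $\ttt$ is self-adjoint precisely when $k=0$ and $\alpha,\beta\in\mathbb{R}$, so assume this from now on; then $\ttt$ is positive by Proposition~\ref{prop:9.6}. The key observation is that $\mathbf{1}_\cx\in D(\ca)$ — indeed $\mathbf{1}\in H^2(0,1)=D(D_m)$ and $\cp\mathbf{1}=(1,1)^{\!\top}$ — and, since $\mathbf{1}'\equiv 0$ (so that $B\mathbf{1}=0$ and $D_m\mathbf{1}=\mathbf{1}''+k\mathbf{1}'=0$),
\[
\ca\,\mathbf{1}_\cx=\begin{pmatrix}A & B\\ 0 & D_m\end{pmatrix}\begin{pmatrix}\begin{pmatrix}1\\1\end{pmatrix}\\ \mathbf{1}\end{pmatrix}=\begin{pmatrix}\begin{pmatrix}\alpha\\\beta\end{pmatrix}\\ 0\end{pmatrix}.
\]

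From this both directions follow at once. If $\alpha\le0$ and $\beta\le0$, then $\ca\mathbf{1}_\cx\le0$, whence $\ct(t)\mathbf{1}_\cx=\mathbf{1}_\cx+\int_0^t\ct(s)\ca\mathbf{1}_\cx\,ds\le\mathbf{1}_\cx$ by positivity of $\ttt$, so $\ttt$ is submarkovian; and if moreover $\alpha=\beta=0$, then $\ca\mathbf{1}_\cx=0$ and the same identity gives $\ct(t)\mathbf{1}_\cx=\mathbf{1}_\cx$, i.e.\ $\ttt$ is markovian. Conversely, if $\ttt$ is submarkovian then $t^{-1}\bigl(\ct(t)\mathbf{1}_\cx-\mathbf{1}_\cx\bigr)\le0$ for all $t>0$; letting $t\to0^+$, using that $\mathbf{1}_\cx\in D(\ca)$ (so the difference quotient converges to $\ca\mathbf{1}_\cx$) together with closedness of the positive cone of $\cx$, we obtain $\ca\mathbf{1}_\cx\le0$, that is $\alpha\le0$ and $\beta\le0$; if in addition $\ttt$ is markovian, the same computation with $\ct(t)\mathbf{1}_\cx=\mathbf{1}_\cx$ forces $\ca\mathbf{1}_\cx=0$, hence $\alpha=\beta=0$. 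Combining with the self-adjointness constraints $k=0$ and $\alpha,\beta\in\mathbb{R}$ then gives exactly the claimed equivalences (with ``negative'' understood, as is customary, in the weak sense ``$\le0$'').

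No step here is delicate; the only points deserving care are the measure-theoretic identification of $\cx$ that makes ``(sub)markovian'' meaningful, the verification that the order unit $\mathbf{1}_\cx$ really lies in $D(\ca)$, and the legitimacy of differentiating the inequality $\ct(t)\mathbf{1}_\cx\le\mathbf{1}_\cx$ at $t=0$ — all handled above. A more pedestrian alternative would run through the Beurling--Deny/Ouhabaz criteria for the symmetric form $a(u,v)=\int_0^1 u'\overline{v'}\,dx-\alpha u(0)\overline{v(0)}-\beta u(1)\overline{v(1)}$ on $H^1(0,1)$ which (for $k=0$) is associated with $-\ca$ according to \eqref{eq:9.5}, checking that the unit contraction $(0\vee u)\wedge 1$ operates, but the semigroup-level argument above is shorter and more transparent.
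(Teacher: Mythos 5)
Your proposal is correct and follows exactly the paper's (very terse) argument: invoke Proposition~\ref{prop:9.6} for positivity, Proposition~\ref{prop:9.8} for self-adjointness, and observe that $\ca\mathbf{1}=\bigl(\begin{smallmatrix}\alpha\\\beta\end{smallmatrix}\bigr)\oplus 0$. You merely fill in the details the authors leave implicit (the $L^2(\mu)$ identification, the equivalence of $L^\infty$-contractivity with $\ct(t)\mathbf{1}\le\mathbf{1}$ for positive semigroups, and the differentiation at $t=0$), all of which are handled correctly, including the reading of ``negative'' as $\le 0$.
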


\begin{proof}
Use Proposition~\ref{prop:9.6} and Proposition~\ref{prop:9.8} and observe that
\[
\ca\mathbf{1}=\begin{pmatrix} \begin{pmatrix} \alpha \\ \beta \end{pmatrix} \\ 0 \end{pmatrix}.
\]
\end{proof}

\begin{remark}\label{rem:9.11}
Let $\alpha,\beta$ be real and negative. Then $\ca$ is the generator of positive analytic submarkovian semigroup, and arguing as in \cite[Thm.~X.55]{RS75} one can show that $\ttt$ is continuous $L^p$-contractive in the sense of \cite[\S~X.8, p.~255]{RS75}, and also analytic on $\mathbb{C}^2\times L^p(0,1)$, $1<p<\infty$.
\end{remark}

Frequently, when our interest turns to stability properties of the solutions of \eqref{eq:DTDB}, positivity enters the picture and permits quite simple and explicit criteria.

\begin{proposition}\label{prop:9.12}
Let the coefficients $\alpha,\beta$ be real.
\begin{enumerate}[label=(\roman*)]
\item In the case of $k=0$, the semigroup $\ttt$ is uniformly exponentially stable if and only if
\[
\alpha+\beta<\min\{2,\alpha\beta\}.
\tag{UES$_{k=0}$}\label{eq:UES-k0}
\]
\item In the case of $k>0$, the semigroup $\ttt$ is uniformly exponentially stable if and only if
\[
\frac{\alpha +\beta}{1+e^{-k}}< \frac{k}{1-e^{-k}} < \frac{\alpha\beta}{\beta +\alpha e^{-k}}.
\tag{UES$_{k>0}$}\label{eq:UES-kpos}
\]
\end{enumerate}
\end{proposition}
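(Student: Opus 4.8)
The plan is to use positivity to reduce uniform exponential stability of $\ttt$ to the single scalar inequality $s(A+B\cd_0)<0$, and then to extract the stated conditions from the Routh--Hurwitz criterion for the resulting $2\times 2$ matrix.

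First I would bring in positivity. Since $\alpha,\beta$ are real, Proposition~\ref{prop:9.6} gives that $\ttt$ is positive; moreover $A=\operatorname{diag}(\alpha,\beta)$ generates the positive semigroup $\operatorname{diag}(e^{\alpha t},e^{\beta t})$ on $\mathbb{C}^2$, and $D$ generates a positive semigroup on $L^2(0,1)$ (it is resolvent positive by the weak maximum principle, exactly as in the proof of Proposition~\ref{prop:9.6}). By Lemma~\ref{lem:9.3}, $D$ is invertible with $s(D)=-\pi^2-\frac{k^2}{4}<0$. Hence Proposition~\ref{prop:8.10}(ii) applies and shows that $\ttt$ is uniformly exponentially stable if and only if $s(A+B\cd_0)<0$ and $s(D)<0$; since $s(D)<0$ always holds, this reduces the claim to the condition $s(A+B\cd_0)<0$. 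As $\dx=\mathbb{C}^2$, the operator $A+B\cd_0$ is a real $2\times 2$ matrix, so $s(A+B\cd_0)<0$ holds precisely when $\operatorname{tr}(A+B\cd_0)<0$ and $\det(A+B\cd_0)>0$.

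Next I would compute $A+B\cd_0$ explicitly, using $0\in\rho(D)$ (Lemma~\ref{lem:9.3}) so that $B\cd_0$ is obtained from \eqref{eq:9.3}--\eqref{eq:9.4} at $\lambda=0$. For $k=0$ this is the value $\lambda=-\frac{k^2}{4}$, and \eqref{eq:9.4} yields $B\cd_0=\left(\begin{smallmatrix}-1&1\\ 1&-1\end{smallmatrix}\right)$, hence $A+B\cd_0=\left(\begin{smallmatrix}\alpha-1&1\\ 1&\beta-1\end{smallmatrix}\right)$, whose trace is $\alpha+\beta-2$ and whose determinant is $\alpha\beta-\alpha-\beta$; the two conditions become $\alpha+\beta<2$ and $\alpha+\beta<\alpha\beta$, that is, \eqref{eq:UES-k0}. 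For $k>0$ the characteristic roots of \eqref{eq:9.1} at $\lambda=0$ are $\mo=0$ and $\md=-k$, and substituting into \eqref{eq:9.3} with $\gamma:=\frac{k}{1-e^{-k}}>0$ gives
\[
B\cd_0=\gamma\begin{pmatrix}-1 & 1\\ e^{-k} & -e^{-k}\end{pmatrix},\qquad
A+B\cd_0=\begin{pmatrix}\alpha-\gamma & \gamma\\ \gamma e^{-k} & \beta-\gamma e^{-k}\end{pmatrix},
\]
so that $\operatorname{tr}(A+B\cd_0)=\alpha+\beta-\gamma(1+e^{-k})$ and $\det(A+B\cd_0)=\alpha\beta-\gamma(\beta+\alpha e^{-k})$. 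Dividing the trace condition by $1+e^{-k}>0$ produces the left inequality of \eqref{eq:UES-kpos}, and rearranging the determinant condition produces the right one.

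The conceptual content sits entirely in the positivity-based reduction; the $2\times 2$ computations and the eigenvalue discussion are routine. The one point I expect to need care is, for $k>0$, rewriting $\det(A+B\cd_0)>0$, i.e., $\alpha\beta>\gamma(\beta+\alpha e^{-k})$, in the displayed form $\frac{k}{1-e^{-k}}<\frac{\alpha\beta}{\beta+\alpha e^{-k}}$: this requires dividing by $\beta+\alpha e^{-k}$, so one must keep track of the sign of that quantity, either by verifying that it is positive throughout the parameter range singled out by the trace inequality, or by retaining the determinant condition in polynomial form. Beyond Proposition~\ref{prop:8.10}, Proposition~\ref{prop:9.6}, Lemma~\ref{lem:9.3} and the formulas \eqref{eq:9.3}--\eqref{eq:9.4}, no further tool seems to be required.
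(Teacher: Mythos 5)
Your proof is correct and follows essentially the same route as the paper: positivity (Proposition~\ref{prop:9.6}) together with Proposition~\ref{prop:8.10}(ii) and $s(D)<0$ from Lemma~\ref{lem:9.3} reduces everything to $s(A+B\cd_0)<0$, and the resulting $2\times2$ matrix is analyzed explicitly --- the paper computes the eigenvalues (checking for $k>0$ that the discriminant is positive so they are real) where you invoke the trace/determinant criterion, but these are equivalent. Your caveat about dividing by $\beta+\alpha e^{-k}$ is well taken: the paper's own proof only establishes the conditions in the polynomial form $(\alpha+\beta)(1-e^{-k})<k(1+e^{-k})$ and $\alpha\beta(1-e^{-k})>k(\beta+\alpha e^{-k})$, and the displayed form \eqref{eq:UES-kpos} tacitly assumes $\beta+\alpha e^{-k}>0$, so retaining the polynomial form of the determinant condition, as you suggest, is the safe resolution.
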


\begin{proof}
Recall that an analytic positive semigroup is uniformly exponentially stable if and only if it is weakly stable if and only if it is exponentially stable (use \cite[Thm.~V.1.10, Prop.~VI.1.14, and Ex.~V.1.6(4)]{EN00}). Since the operators $A$ and $D$ generate positive semigroups, as shown in the proof of Proposition~\ref{prop:9.6}, and since $s(D)<0$, $\ttt$ is uniformly exponentially stable if and only if the spectral bound of $(A+B\cd_0)$ is strictly negative (use Proposition~\ref{prop:8.10}(ii)).

Let $k=0$. By \eqref{eq:9.4} we have
\[
A+B\cd_0=\begin{pmatrix}
\alpha -1 & 1 \\
1 & \beta -1
\end{pmatrix},
\]
whose largest eigenvalue $\alpha+\beta-2+\sqrt{(\alpha-\beta)^2+4}$ is strictly negative if and only if \eqref{eq:UES-k0} holds.

We now consider the case $k\not=0$. Observe that for $\lambda=0$ we obtain $\mu_1=0$, $\mu_2=-k$, and hence \eqref{eq:9.3} yields
\[
B\cd_0=\frac{1}{1-e^{-k}}\begin{pmatrix}
-k & k \\
ke^{-k} & -ke^{-k}
\end{pmatrix}.
\]
The characteristic polynomial of $A+B\cd_0$ is then $\lambda^2 + b\lambda + c = 0$, where
\[
b = k(1+e^{-k})(1-e^{-k})^{-1}-\alpha-\beta \quad \text{and} \quad c= \alpha\beta - k(\alpha e^{-k} + \beta)(1-e^{-k})^{-1}.
\]
Note that
\[
b^2-4c = (\alpha-\beta -k)^2 + 4k^2e^{-k}(1-e^{-k})^{-2}> 0,
\]
and hence both eigenvalues of $A+B\cd_0$ are real. They are strictly negative if and only if the terms $b$ and $c$ are strictly positive, that is, if and only if
\[
(\alpha +\beta )(1-e^{-k})< k(1+e^{-k}) \qquad\text{and}\qquad \alpha\beta (1-e^{-k}) > k (\beta +\alpha e^{-k}). 
\]
\end{proof}

\begin{remark}\label{rem:9.13}
We conclude by briefly observing that the theory of osc operator matrices also fits second order problems. Let us consider the wave equation with dynamical boundary condition
\[
\begin{cases}
\ddot{u}(t,x)=u''(t,x), & t\geq0,\; x\in[0,1], \\
\ddot{u}(t,0)=u'(t,0)+\alpha u(t,0), & t\geq0, \\
\ddot{u}(t,1)=-u'(t,1)+\beta u(t,1), & t\geq0, \\
u(0,x)=f(x), & x\in[0,1], \\
\dot{u}(0,x)=g(x), & x\in[0,1].
\end{cases}
\tag{WDB}\label{eq:WDB}
\]
A system similar to \eqref{eq:WDB} has already been discussed in \cite{FGGR00,XL02} in the space $C[0,1]$, a setting that permits to interpret the dynamical boundary conditions defined above as generalized Wentzell (or Wentzell-Robin) boundary conditions. Thanks to our results in \eqref{sec:diffusion-transport} we can now prove that \eqref{eq:WDB} is well-posed.

Let us denote by $\delta'_x$ the linear form $\delta_x'f:=f'(x)$ on $H^2(0,1)$, $0\le x\le 1$. By Proposition~\ref{prop:9.7} and Proposition~\ref{prop:9.8} the operator matrix
\[
\begin{pmatrix}
0 & \begin{pmatrix} \delta_0' \\ -\delta_1' \end{pmatrix} \\
0 & \frac{d^2}{dx^2}
\end{pmatrix}
\quad\text{with domain}\quad\left\{\begin{pmatrix} \begin{pmatrix} x_0 \\ x_1 \end{pmatrix} \\ u \end{pmatrix}\in\mathbb{C}^2\times H^2(0,1) : u(0)=x_0,\; u(1)=x_1\right\}
\]
is dissipative and selfadjoint on the Hilbert space $\mathbb{C}^2\times L^2(0,1)$, and hence it generates a cosine function. It follows that there exists a unique classical solution of \eqref{eq:WDB} for all $f,g\in H^2(0,1)$, $\alpha,\beta\in\mathbb{C}$ (use \cite[Cor.~3.14.12 and Cor.~3.14.13]{ABHN01}).
\end{remark}

\begin{remark}\label{rem:9.14}
The matrix $\ca$ also defines an operator on the product space $\cx_p:=\mathbb{C}^2\times L^p(0,1)$, $1\le p<\infty$. Arguing as in the proof of Proposition~\ref{prop:9.5}, one can see that $\ca$ has compact resolvent on any space $\cx_p$ - in fact, this holds even after replacing the diffusion-transport operator $D$ by an arbitrary operator with compact resolvent on $L^p(0,1)$.

After the draft of this paper was completed, J. Goldstein informed us that this result has been obtained by Binding et al. (cf. \cite[\S~4]{BBW00}) only for $p=2$ in the context of Sturm–Liouville problems with eigenparameter dependent boundary conditions. We gratefully thank him for this observation.
\end{remark}


\begin{thebibliography}{99}
\bibitem{ABHN01} W. Arendt, C.J.K. Batty, M. Hieber, and F. Neubrander, \textit{Vector-valued Laplace Transforms and Cauchy Problems}, Monographs in Mathematics \textbf{96}, Birkhäuser 2001.

\bibitem{AE96} H. Amann and J. Escher, \textit{Strongly continuous dual semigroups}, Ann. Mat. Pura Appl., IV. Ser. \textbf{171} (1996), 41--62.

\bibitem{AMPR02} W. Arendt, G. Metafune, D. Pallara, and S. Romanelli, \textit{The Laplacian with Wentzell -Robin boundary conditions on spaces of continuous functions}. Preprint.

\bibitem{BBW00} P.A. Binding, P.J. Browne, B.A. Watson, \textit{Inverse spectral problems for Sturm--Liouville equations with eigenparameter dependent boundary conditions}, J. London Math. Soc. \textbf{62} (2000), 161--182.

\bibitem{BP02} A. Batkai and S. Piazzera, \textit{Semigroups for Delay Equations in $L^p$-Phase Spaces}. Book preprint. Preliminary version available at \texttt{www.fa.uni-tuebingen.de/people/supi/supi.html}.

\bibitem{CENN02} V. Casarino, K.-J. Engel, R. Nagel, and G. Nickel, \textit{A semigroup approach to boundary feedback systems}, Integral Equations Oper. Theory (to appear).

\bibitem{En97a} K.-J. Engel, \textit{Positivity and stability for one-sided coupled operator matrices}, Positivity \textbf{1} (1997), 103--124.

\bibitem{En97b} K.-J. Engel, \textit{One-sided coupled operator matrices for neutral differential equations}, Tübinger Berichte zur Funktionalanalysis \textbf{7} (1997/98), 103--109.

\bibitem{En98a} K.-J. Engel, \textit{On the characterization of admissible control- and observation operators}, Systems Control Lett. \textbf{34} (1998), 225--227.

\bibitem{En98b} K.-J. Engel, \textit{Matrix representations of linear operators on product spaces}, Rend. Circ. Mat. Palermo (2) Suppl. \textbf{56} (1998), 219--224.

\bibitem{En99} K.-J. Engel, \textit{Spectral theory and generator property for one-sided coupled operator matrices}, Semigroup Forum \textbf{58} (1999), 267--295.

\bibitem{En02} K.-J. Engel, \textit{Operator Matrices and Systems of Evolution Equations}. Book preprint. Preliminary version available at \texttt{www.fa.uni-tuebingen.de/people/klen/klen.html}.

\bibitem{EN00} K.-J. Engel and R. Nagel, \textit{One-Parameter Semigroups for Linear Evolution Equations}, Graduate Texts in Mathematics \textbf{194}, Springer-Verlag 2000.

\bibitem{FGGR00} A. Favini, G. Ruiz Goldstein, J.A. Goldstein, and S. Romanelli, \textit{$C_0$-semigroups generated by second order differential operators with general Wentzell boundary conditions}, Proc. Amer. Math. Soc. 128 (2000), 1981--1989.

\bibitem{FGGR01} A. Favini, G. Ruiz Goldstein, J.A. Goldstein, and S. Romanelli, \textit{The one dimensional wave equation with Wentzell boundary conditions}, in: S. Aicovici, N. Pavel (eds.), ``Differential Equations and Control Theory'', Lecture Notes in Pure and Applied Mathematics, Marcel Dekker 2001, 139--145.

\bibitem{FGGR02} A. Favini, G. Ruiz Goldstein, J.A. Goldstein, and S. Romanelli, \textit{The heat equation with generalized Wentzell boundary condition}, J. Evolution Equations \textbf{2} (2002), 1--19.

\bibitem{Gr87} G. Greiner, \textit{Perturbing the boundary conditions of a generator}, Houston J. Math. \textbf{13} (1987), 213--229.

\bibitem{GT77} D. Gilbarg and N.S. Trudinger, \textit{Elliptic Partial Differential Equations of Second Order}, Grundlehren der mathematischen Wissenschaften \textbf{224}, Springer-Verlag 1977.

\bibitem{KMN02} M. Kramar, D. Mugnolo, and R. Nagel, \textit{Semigroups for initial--boundary value problems}, in: M. Iannelli and G. Lumer (eds.): ``Evolution Equations 2000: Applications to Physics, Industry, Life Sciences and Economics'' (Proceedings Levico Terme 2000), Progress in Nonlinear Differential Equations, Birkhäuser 2003.

\bibitem{LM72} J.L. Lions and E. Magenes, \textit{Non-Homogeneous Boundary Value Problems and Applications. vol.1}, Grundlehren der mathematischen Wissenschaften \textbf{181}, Springer-Verlag 1972.

\bibitem{Mu01} D. Mugnolo, \textit{A note on abstract initial boundary value problems}, Tübinger Berichte zur Funktionalanalysis \textbf{10} (2000/01), 158--162.

\bibitem{Na85} R. Nagel, \textit{Well-posedness and positivity for systems of linear evolution equations}, Conf. Semin. Mat. Univ. Bari \textbf{203} (1985).

\bibitem{Na86} R. Nagel (ed.), \textit{One-Parameter Semigroups of Positive Operators}, Lect. Notes in Math. \textbf{1184}, Springer-Verlag 1986.

\bibitem{Na89} R. Nagel, \textit{Towards a ``matrix theory'' for unbounded operator matrices}, Math. Z. \textbf{201} (1989), 57--68.

\bibitem{Na97} R. Nagel, \textit{Characteristic equations for the spectrum of generators}, Ann. Scuola Norm. Sup. Pisa Cl. Sci. \textbf{24} (1997), 703--717.

\bibitem{Pa83} A. Pazy, {\it Semigroups of Linear Operators and
Applications to Partial Differential Equations}, Applied
Mathematical Sciences {\bf 44}, Springer-Verlag 1983.

\bibitem{RS75} M. Reed and B. Simon, \textit{Methods of Modern Mathematical Physics. II: Fourier Analysis, Self-Adjointness}, Academic Press 1975.

\bibitem{Ta96} M. Taylor, \textit{Partial Differential Equations}, Applied Mathematical Sciences \textbf{115--117}, Springer-Verlag 1996.

\bibitem{Wa03} M. Warma, \textit{Wentzell--Robin boundary conditions on $C[0,1]$}, Semigroup Forum \textbf{66} (2003), 162--170.

\bibitem{XL02} T. Xiao and J. Liang, \textit{A solution to an open problem for wave equations with generalized Wentzell boundary conditions}. Preprint.
\end{thebibliography}
\end{document}